\documentclass[11pt,reqno, a4paper]{amsart}

\usepackage{amssymb}

\usepackage[english]{babel}

\usepackage[dvipsnames,svgnames,x11names]{xcolor}
\usepackage[hyphens]{url}
\usepackage[colorlinks=true,linkcolor=Maroon,citecolor=blue,urlcolor=blue,hypertexnames=false,linktocpage]{hyperref}
\usepackage{bookmark}
\usepackage{amsmath,thmtools,mathtools}
\mathtoolsset{showonlyrefs=true}
\usepackage{bm}
\usepackage{mathtools}

\usepackage{fancyhdr}
\usepackage{esint}
\usepackage{enumerate}

\usepackage[scr]{rsfso}
\usepackage{mathtools}
\usepackage{pictexwd,dcpic}
\usepackage{graphicx}

\def\avint{\mathop{\mathchoice{\,\rlap{-}\!\!\int}
		{\rlap{\raise.15em{\scriptstyle -}}\kern-.2em\int}
		{\rlap{\raise.09em{\scriptscriptstyle -}}\!\int}
		{\rlap{-}\!\int}}\nolimits}

\usepackage[labelfont=bf, font=small]{caption}

\newcounter{mgncount}

\declaretheorem[name=Theorem,numberwithin=section]{thm}

\declaretheorem[name=Lemma,sibling=thm]{lemma}
\declaretheorem[name=Proposition,sibling=thm]{prop}

\declaretheorem[name=Corollary,sibling=thm]{cor}

\declaretheorem[name=Remark,style=remark,sibling=thm]{bem}

\numberwithin{equation}{section}

\newcommand{\cM}{\mathcal{M}}

\newcommand{\mc}{\mathcal}

\usepackage{dsfont}
\newcommand{\R}{\mathds{R}} 
\newcommand{\KGauss}{\mathscr{K}}
\newcommand{\heatoperatorDOSdim}{ \left( \partial_t - \partial_s^2 \right)}
\newcommand{\pr}{ \partial_r} 
\newcommand{\pu}{ \partial_u} 
\newcommand{\ps}{ \partial_s} 
\newcommand{\kcirc}{\overset{\circ}{\kappa}}

\newcommand{\avg}[1]{\langle #1 \rangle}
\def\restrict#1{\raise-.5ex\hbox{\ensuremath|}_{#1}}

\usepackage{scalerel}[2014/03/10]
 \usepackage[usestackEOL]{stackengine}
\def\intavg{\,\ThisStyle{\ensurestackMath{%
    \stackinset{c}{0\LMpt}{c}{0\LMpt}{\SavedStyle-}{\SavedStyle\phantom{\int}}}%
    \setbox0=\hbox{$\SavedStyle\int\,$}\kern-\wd0}\int}

\usepackage{graphicx}
\def\tang{\ThisStyle{\abovebaseline[0pt]{\scalebox{-1}{$\SavedStyle\perp$}}}} 

\numberwithin{equation}{section}

\usepackage{xcolor}
\definecolor{color-cites}{HTML}{9a2144}
\definecolor{brickred}{HTML}{e63d12}
\definecolor{royalblue}{HTML}{2054e3}
\definecolor{blavet}{HTML}{3adbcc}
\definecolor{verd}{HTML}{4a6741}

\usepackage{tikz}

\begin{document}
	
	\title[CCF on pinched Hadamard  surfaces]{Constrained curvature flows \\ on  pinched Hadamard surfaces}
	\author{Sara Albert-Niclòs, Esther Cabezas-Rivas,}

	\address{S. Albert Niclòs: : Departament de Matem\`atiques,
		Universitat de Val\`encia, Av. Vicent Andrés Estellés 19, 46100 Burjassot, Spain.
		{\tt sara.albert@uv.es  }}
	
	\address{E. Cabezas-Rivas: Departament de Matem\`atiques,
		Universitat de Val\`encia, Av. Vicent Andrés Estellés 19, 46100 Burjassot, Spain.
		{\tt esther.cabezas-rivas@uv.es  }}

\date{\today. The authors have been partially supported by project PID2022-136589NB-I00  funded by
	MCIN/AEI/10.13039/501100011033 and by ERDF A way of making Europe. The first author has an additional predoctoral grant PREP2022-000011 tied to the aformentioned project. Both authors ave also been partially supported by the
	project CIAICO/2023/035 funded by Conselleria d’Educació, Cultura, Universitats i Ocupació.}	
	
	\begin{abstract}
We study area- and length-preserving curvature flows for embedded closed curves on pinched Hadamard surfaces. In the variable-curvature setting, the evolution equations contain additional lower-order terms, so the PDE analysis requires refined comparison arguments and delicate curvature estimates. For smooth convex initial curves, we prove preservation and instantaneous strictness of convexity, long-time existence, and uniform bounds for the curvature and its higher derivatives. Under additional geometric assumptions, we obtain convergence of the curvature to a constant. In the rotationally symmetric case, the area-preserving flow exhibits a dichotomy: either the evolving curves converge exponentially to a geodesic circle, or they drift off to infinity and approach a constant-curvature limit curve. We also identify a geometric condition on the initial curve that prevents escape to infinity and guarantees convergence to a geodesic circle.
	\end{abstract}
	\keywords{Constrained curvature flow; Hadamard pinched surfaces; Rotationally symmetric; area-preserving; length-preserving}
	 \subjclass[2020]{53E10, 35R01, 35K59, 35K93, 35B35} 
	\maketitle

	\section{Introduction and statement of main results}


Let $\gamma_0: \mathbb{S}^1 \to \cM$ be an embedded, smooth and closed  curve, where $(\cM,g)$ is a complete and oriented surface, and let $\mathcal{T}>0$. The family $\gamma: \mathbb{S}^1 \times [0, \mathcal T) \to \cM$ evolves under the \textit{constrained curvature flow} (CCF, for short) if it satisfies
 \begin{equation}
\left\{
\begin{array}{l}
\partial_t \gamma(p,t) =  \big( h(t) - \kappa(p,t) \big) N(p,t), \smallskip \\
\gamma(p,0) = \gamma_0(p).
\end{array}
\right.
\label{eq:ccf_kappa}
\end{equation}
where $\kappa$ denotes the (intrinsic or geodesic) curvature of $\gamma$, and $N$ is the unit outer normal vector to the evolving  curve $\Gamma_t = \gamma(\mathbb S^1,t)$.  
The sign convention is chosen so that for convex curves (i.e., those with $\kappa > 0$) the curvature vector points inwards.  
Here $h(t)$ is a global term depending on the chosen constraint:
 \begin{equation}
h(t)=\frac{\int_{\mathbb{S}^1} \kappa^{1+\alpha} \ ds}{\int_{\mathbb{S}^1} \kappa^\alpha \ ds};
\label{eq:def_h_LPCF_f}
\end{equation}
 $h$ corresponds for $\alpha = 0$ to the average curvature and the flow is called the \emph{Area-Preserving Curve Shortening Flow} ({\sc ap-csf}), while for $\alpha =1$ one gets the \emph{Length-Preserving Curvature Flow} ({\sc lp-cf}). Both represent non-local constrained versions of the  curve shortening flow.

As the name suggests,  in the {\sc ap-csf} the presence of the global term keeps the enclosed area constant throughout the evolution  while the length is decreasing with time. This makes the flow especially interesting for isoperimetric applications, but also more delicate to analyze since standard tools such as comparison principles may fail and, consequently, e.g. embedded curves can develop self-intersections (cf.~\cite{MS00}).


In this work, we consider curves evolving on 
\emph{pinched Hadamard surfaces}, that is, complete, simply connected Riemannian surfaces that generalize the hyperbolic plane by allowing the Gauss curvature $\KGauss$ to vary between two negative bounds
\begin{equation} \label{pinched}
-b^2 \leq \KGauss \leq -a^2, \qquad 0 < a < b.
\end{equation}
We also obtain some results in the case of bounded (non necessarily negative) Gauss curvature. Let us highlight that most of the previous literature assume constant  ambient curvature. In fact, compared with the flat or constant-curvature case, the variable ambient geometry introduces additional lower-order terms into the evolution equations, making the a priori PDE estimates substantially more delicate and preventing a direct use of standard maximum-principle arguments. The analysis therefore requires a careful combination of comparison geometry, refined evolution identities, and sharp curvature estimates.

\smallskip

 Here we manage to deal with all the new technical challenges appearing because of the variation in $\KGauss$, showing there are phenomena during the evolution that do not appear in the constant curvature case. More precisely, we prove

\begin{thm} \label{main}
	Let $\cM$ be a pinched Hadamard surface, and let $\gamma_0: \mathbb{S}^1 \to \cM$ be an embedded, smooth, closed and convex  curve. Then the solution $\gamma:\mathbb{S}^1\times [0,\mathcal{T})\to \cM$ of \eqref{eq:ccf_kappa} with global term $h(t)$ defined as in \eqref{eq:def_h_LPCF_f} and initial condition $\gamma_0$ satisfies the following:
	\begin{enumerate}
	\item[{\rm (1)}]   $\gamma_t = \gamma(\cdot, t)$ is strictly convex for all $t > 0$.
	\smallskip
	\item[{\rm (2)}] $\mathcal T =\infty$, that is, the solution exists for all time $t \in [0, \infty)$. 
	\smallskip
	\item[{\rm (3)}] If $\sup_{\Gamma_t}|\nabla^\ell \KGauss| \leq {C_2}$ for $\ell =1,2$, then the curvature of $\gamma_t$ subconverges uniformly to a constant.
	\smallskip
	\item[{\rm (4)}]  For the {\sc ap-csf}, assume that $\cM$ is rotationally symmetric with $\KGauss$ decreasing with respect to the radial distance to the pole. Then
	exactly one of the following two alternatives holds:
	\begin{enumerate}
		\smallskip
		\item the solution converges exponentially fast in the $C^\infty$-topology to a geodesic circle centered at the pole enclosing the same area as $\gamma_0$, or,
		\smallskip
		\item there is a subsequence that ``diverges'' to spatial infinity, being the limiting shape a constant curvature curve not centered at the pole. 
	\end{enumerate}
\end{enumerate}
\end{thm}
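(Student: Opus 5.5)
The proof will follow the scheme of Gage and of Mao–Pan–Wang for the flat and hyperbolic cases, organised around the evolution equations and keeping track of the extra terms coming from $\KGauss$. Along \eqref{eq:ccf_kappa} one has
\[
\partial_t\kappa=\partial_s^2\kappa+(\kappa-h)(\kappa^2+\KGauss)+(h-\kappa)\,\langle\nabla\KGauss,\cdot\rangle\text{-free terms},\qquad \partial_t\, ds=\kappa(h-\kappa)\,ds .
\]
In dimension two, $\kappa$ picks up no derivative of $\KGauss$, only $\KGauss$ itself. Higher derivatives $\partial_s^m\kappa$ do involve $\nabla^\ell\KGauss$.

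\textbf{Convexity, items (1)–(2).} For convexity I apply the strong maximum principle to $\kappa$. At a spatial minimum where $\kappa$ first reaches $0$, the reaction term equals $-h\,\KGauss\ge a^2h>0$, because $h>0$ while the curve is convex. So $\kappa_{\min}$ cannot reach zero, and it becomes strictly positive for $t>0$. This gives (1).

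For long-time existence I first obtain an upper bound on $\kappa$. I will try Gage–Hamilton/Andrews's approach. The area (respectively the length) is preserved and the length decreases (respectively the area increases), as one checks from the isoperimetric-type inequality on Cartan–Hadamard surfaces. Convexity together with the curvature pinching then gives a uniform inradius and outradius via comparison with geodesic discs in the model spaces of curvature $-a^2$ and $-b^2$. Next I bound $h$: for $\alpha=0$ Gauss–Bonnet gives $\int\kappa\,ds=2\pi-\int_\Omega\KGauss\le 2\pi+b^2A$, and dividing by a length bounded below controls $h$. For $\alpha=1$ I use a Gage-type inequality $\int\kappa^2\le C\int\kappa$ for convex curves with bounded inradius.

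With $h$ bounded, I bound $\kappa$ through the Tso/Chou support-function method. I take the maximum of $\kappa/(u-\delta)$, where $u$ is a "support function" built from the distance to a fixed centre; in hyperbolic-type geometry one can use $\cosh$ of the distance. Its Hessian comparison is controlled by the pinching in \eqref{pinched}. This gives $\kappa\le C$ on finite time intervals. Higher derivatives then follow from the standard energy estimates $\tfrac{d}{dt}\int(\partial_s^m\kappa)^2$. These give (2), since blow-up requires $\kappa$ to blow up.

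\textbf{Item (3).} Under the assumption on $\nabla^\ell\KGauss$, the same energy estimates become uniform in time. Monotonicity gives $\int_0^\infty\!\int (h-\kappa)^2\kappa^\alpha\,ds\,dt<\infty$, because $L'=-\int\kappa(\kappa-h)\,ds$ for AP and similarly for LP. Combined with uniform $C^k$ bounds, this forces $\|\kappa-h\|_{L^2}\to0$ along a sequence. Interpolation upgrades this to uniform convergence, and Arzelà–Ascoli gives subconvergence to a constant.

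\textbf{Item (4).} In the rotationally symmetric case I use the subsequential limits from (3). Either the curves stay in a compact set, or they do not.

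In the first case the limit has constant curvature and lies in the surface with $\KGauss$ radially decreasing. A first-variation argument shows it must be a geodesic circle centred at the pole: a constant-curvature closed embedded curve, reflected across lines through the pole, combined with Alexandrov reflection and the monotonicity of $\KGauss$, must be centred. Exponential convergence then follows from the linearisation at that circle. It has a spectral gap because the radial monotonicity kills the translational kernel present in the homogeneous case; a Łojasiewicz-free argument works by this strict stability.

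In the second case, if the centres escape, translating back along isometries fails, but the uniform estimates still give a constant-curvature limit shape, and it is not centred at the pole.

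I expect the main obstacle to be the uniform curvature upper bound. The Tso-type test function must be built from a distance function whose Hessian is only pinched, not exact, and the extra $\KGauss$ terms have to be absorbed. The stability and spectral-gap step in (4) is also delicate.
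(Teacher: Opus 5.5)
Your argument for (1) is correct and more direct than the paper's. At a zero of $\kappa$ that is a spatial minimum, $\partial_t\kappa\ge -h\KGauss\ge a^2h>0$, with $h>0$ by Gauss--Bonnet. So a zero at a time $t_1>0$ would force $\kappa<0$ just before $t_1$. The paper instead uses the strong maximum principle and rules out $\kappa\equiv0$ via Gauss--Bonnet. The genuine gaps are in the curvature bound and in (4).

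\textbf{Curvature bound, (2) and (3).} Your bound on $h$ for $\alpha=1$ rests on $\int\kappa^2\le C\int\kappa$, which is false for convex curves with inradius bounded below. Round off a corner of a polygon with radius $\varepsilon$: $\int\kappa\,ds$ stays bounded while $\int\kappa^2ds\sim1/\varepsilon$. It is also unnecessary. Since $h\le\max\kappa$, the only $h$-term with a bad sign in $\Box\mathscr W$, $\mathscr W=\kappa/(u-c)$, is $-h\KGauss/(u-c)$. That term is linear in $\max\mathscr W$ and is absorbed by $-c^2\mathscr W^3$.

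The real missing idea, needed already for (2), is how to keep the centre of $u=\sinh r_{p_0}\langle\pr,N\rangle$ fixed. You must show $p_0$ stays inside $\Omega_t$, at a definite distance from $\Gamma_t$, for a time $\tau$ independent of the starting time $t_0$. Geodesic circles do not have constant curvature here, so the usual barrier argument fails. The paper compares $\min r_{p_0}$ with the ODE $R'=-b\coth(bR)$, using the Hessian upper bound, and gets $B(p_0,\rho_0/2)\subset\Omega_t$ on $[t_0,t_0+\tau)$ with $\tau=\tau(a,b,\gamma_0)$. This gives:
\begin{itemize}
\item $r_1/2\le r_{p_0}\le L_0$;
\item $u\ge2c>0$, using $\kappa\ge\min\{\min\kappa_0,a\}$ and $\kcirc\le b\coth(br_1/2)$;
\item control of the non-Killing error terms in $\Box u$, namely $\ps\kcirc$ on a compact annulus and the mixed Hessian term.
\end{itemize}
Without this uniformity you get at most $\kappa\le C(T)$ on finite intervals. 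That is not enough for (3): the higher-derivative estimates and the energy argument need $\sup_t\max\kappa<\infty$. Also, for the length-preserving flow $L'=0$ gives nothing. The paper uses $\int(\kappa-h)^2ds=hA'$, the Osserman bound $aA\le L_0$, and the uniform bound on $h$.

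\textbf{Item (4).} Neither your first-variation remark nor the reflection argument identifies the limit as a centred circle. In a rotationally symmetric surface the only reflections are across geodesics through the pole. These do not form a family sweeping across the curve: if the pole is enclosed, every axis meets the curve, so the moving-plane method cannot start, and at best it yields one symmetry axis. Monotonicity of $\KGauss$ must enter essentially, since in $\mathbb H^2$ off-centre circles have constant curvature, and you do not say how it does. The paper uses Ritor\'e's classification: constant-curvature curves are unduloids, nodoids or centred circles, and with $\KGauss$ decreasing only centred circles remain.

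Second, the linearisation has no spectral gap. The eigenvalues of $-L$ are $(\psi(\mathfrak r)-i^2)/\varphi(\mathfrak r)^2$ for $i\ge1$, so $\psi<1$ does remove the translational modes $i=1$. But $0$ remains an eigenvalue, with the constants as eigenfunctions, corresponding to nearby centred circles of other radii. The paper handles this with maximal regularity and a centre manifold, which it shows coincides with the family of centred circles (normal stability, following Escher--Simonett). Alternatively you would have to work within the area constraint.

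Finally, turning ``not in a compact set'' into $\min r_{t_n}\to\infty$ needs the uniform diameter bound $\mathrm{diam}\,\Gamma_t\le L_0$.
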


Based on the previous dichotomy, we manage to find an additional geometric condition on the initial curve $\gamma_0$ ensuring that the evolving convex curve cannot escape to infinity (cf. Proposition \ref{lemma:condicio_area_distancia_radial_fitada}). The latter is a distinctive geometric behaviour that cannot occur in ambient spaces of constant curvature. The preservation of convexity in (1) actually happens for a more general class of fully non-linear constrained curvature flows (see Lemma \ref{lema:preservationofstrictconvexity}), while the uniform subconvergence of the curvature to a constant also holds in a more general setting, as it needs bounded (non-necessarily negative) Gauss curvature (for concrete details see Theorem \ref{teo:limit_kappa_es_h}). In general, there are many results that are valid for any constrained flow i.e. for any $\alpha$ in the definition of the global term $h(t)$.
\smallskip

On general Riemannian surfaces, the geometry of curves with constant curvature can be quite complex, especially when $\KGauss$ is not constant. Nevertheless, convergence of the curvature function $\kappa$ to a spatially constant value indicates that the flow is regularizing the shape of the curve, making it increasingly uniform in a geometric sense. The extra hypotheses in (4) are precisely imposed to guarantee that there is a unique candidate for a constant curvature curve.

\smallskip

Broadly speaking, a key difficulty in the present work is that the ambient geometry is non-flat and the Gauss curvature is not constant, so the evolution equations contain extra lower-order terms that are absent in the model spaces. This makes the PDE estimates considerably more intricate and requires a careful combination of maximum-principle arguments, curvature comparisons, and auxiliary geometric estimates in order to recover the needed a priori bounds.

\smallskip
About the techniques used to prove the above results, we try to follow the general strategy for the volume-preserving flow of hypersurfaces designed by \cite{Huisken_1987_VPMCF} in the flat case, and enhanced by the second author and Miquel in \cite{CabezasRivasMiquel2007}  for the hyperbolic space. However, we actually have to deal
with an ambient space non necessarily flat and with sectional curvatures non necessarily
constant. The key to overcome these further difficulties is the mixture of some ideas from the
aforementioned works  with several new technical ingredients deployed throughout the paper.

\smallskip

It would be impossible to enumerate here all the refined arguments and technical ingredients we developed ad hoc to handle the non-constant ambient curvature; but let us highlight a couple of illustrative examples. In the hyperbolic space, $\sinh r \, \partial_r$, where $r$ is the radial coordinate, is a conformal Killing field; as a consequence, many terms in the evolution of the support function (which is crucial to control the curvature under the flow) vanish or are easily computed. In our case, this is no longer true, and we have to find and deal with a much more complicated evolution equation (see Proposition \ref{eq:heat_operator_u}), all whose extra terms have to be suitably estimated. 

\smallskip
We managed to do so by using the trick of writing some parts in terms of the curvature $\kcirc$ of geodesic circles, which is well behaved, and by establishing a new comparison result for curves on pinched Hadamard surfaces (cf.~Lemma~\ref{lema:hessian_inequalities_r} (c)), which provides precise two-sided bounds for the Hessian of the radial distance function. These bounds are essential for controlling the geometry of the evolving curves and, in particular, allow us to prove that the curvature of the curves remains controlled under the flow.

\smallskip
On the other hand, geodesic circles on Hadamard surfaces do not need to have constant curvature, and hence the standard ideas to show that the evolving curve encloses an inball of fixed center for a uniform amount of time also fail in our setting 
and we have to devise novel arguments to reach the same conclusion. In short, This is the first setting where the standard geometric barriers no longer come for free because geodesic circles themselves do not have constant curvature. Similarly, there are numerous further instances in which we must introduce new ideas, and the fine details of the proofs diverge substantially from the original methods, even though the overall strategy may appear similar at first glance.

 \smallskip
 
 Moreover, a method relying on maximal regularity theory  described in \cite{EscherSimonett1998_VPMCF} is used to prove that the convergence is exponential.  Indeed, its strength  allows us to extend the long time existence and convergence in Theorem \ref{main} to certain non-necessarily convex initial curves. With more precision,  we prove
 
\smallskip 
 \begin{thm}[Stability of {\sc ap-csf} near spheres]\label{corm} Let $\mc S$ be a geodesic sphere of a pinched Hadamard surface $\mathcal M$ and $0 < \beta < 1$. There exists an $\varepsilon>0$ such that, for every embedding  $\gamma_0:\mathbb S^1\rightarrow \mathcal M$ with $h^{1 + \beta}$-distance to $\mc S$ lower than $\varepsilon$, the  {\sc ap-csf} has a unique solution starting at $\gamma_0$, defined on $[0,\infty)$ and which  converges exponentially to a geodesic sphere in $\mathcal M$ $h^{1 + \beta}$-close to $\mc{S}$ and enclosing the same volume as $\Gamma_0$.
 \end{thm}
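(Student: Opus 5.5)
The plan is to follow the maximal-regularity approach of \cite{EscherSimonett1998_VPMCF}. We recast the {\sc ap-csf} near $\mc S$ as a quasilinear, nonlocal parabolic equation for a scalar height function on $\mc S$. We then apply the generalized principle of linearized stability in little Hölder spaces $h^{1+\beta}$. Throughout, $\mc S$ is understood as a geodesic circle that is a stationary solution, i.e.\ of constant geodesic curvature $\kappa_{\mc S}$, as for geodesic circles centered at the pole in the rotationally symmetric case. Along such a circle we will also need $\KGauss$ to be constant. If the earlier sections do not guarantee this for every geodesic sphere of $\cM$, it has to be taken as part of the hypothesis; otherwise the statement fails, since $\mc S$ would not be stationary.

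\textbf{Step 1: reduction to a scalar equation.} Use the exponential map along the outer normal $N_{\mc S}$ to write nearby curves as normal graphs $\gamma_\rho(x)=\exp_x(\rho(x)N_{\mc S}(x))$, $x\in\mc S$, with $\rho$ small in $h^{1+\beta}(\mc S)$. Then \eqref{eq:ccf_kappa} with $\alpha=0$ becomes, up to tangential diffeomorphisms,
\[
\partial_t\rho = G(\rho) := a(\rho)\big(h(\rho)-\kappa(\rho)\big),
\]
where $a(\rho)>0$ depends on $\rho,\rho'$ and the geometry of $\cM$. The curvature satisfies $\kappa(\rho)=-A(\rho)\rho''+F(\rho,\rho')$ with $A(\rho)>0$. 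The global term $h(\rho)$ is the average curvature, which by Gauss--Bonnet equals $(2\pi-\int_{\Omega_\rho}\KGauss)/L(\rho)$. It is a smooth map $h^{1+\beta}\to\R$ involving only lower-order terms. Hence $G:h^{2+\beta}\to h^{\beta}$ is smooth, and its principal part generates an analytic semigroup with maximal regularity in the little Hölder scale. This yields local existence, uniqueness and smooth dependence for data in an $h^{1+\beta}$-ball, exactly as in \cite{EscherSimonett1998_VPMCF}.

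\textbf{Step 2: linearization and spectrum.} Use arclength $s$ on $\mc S$, of length $L_{\mc S}$. The linearization at $\rho=0$ is
\[
Lu=u''+\big(\kappa_{\mc S}^2+\KGauss\big)u-\frac{1}{L_{\mc S}}\int_{\mc S}\big(\kappa_{\mc S}^2+\KGauss\big)u\,ds,
\]
the last term being the mean of the zero-order part. On Fourier modes $e^{2\pi i n s/L_{\mc S}}$, the mode $n=0$ goes to $0$, since constants are killed by the averaging. For $n\ge1$ the eigenvalues are $-(2\pi n/L_{\mc S})^2+\kappa_{\mc S}^2+\KGauss$.

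The key geometric input is the identity $(2\pi/L_{\mc S})^2=\kappa_{\mc S}^2+\KGauss$ on a stationary geodesic circle. To see it, write the metric in polar form $dr^2+f(r)^2d\theta^2$, so that $L=2\pi f$, $\kappa=f'/f$ and $\KGauss=-f''/f$. The identity then amounts to $f'^2-ff''=1$ at the radius $r_0$ of $\mc S$. This holds in the hyperbolic model, giving eigenvalues $(1-n^2)/\sinh^2 r_0$. In general it has to be verified from the Jacobi equation along the geodesic circle, or extracted from the comparison results for $\kcirc$ and the Hessian of $r$ in Lemma~\ref{lema:hessian_inequalities_r}. Granting it, $L$ has a three-dimensional kernel spanned by $1,\cos,\sin$, and the remainder of the spectrum lies in $(-\infty,-c]$ with $c>0$.

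\textbf{Step 3: normal stability.} Show that near $0$ the set of equilibria $\mathcal E$ is a smooth three-dimensional manifold. It consists of the geodesic spheres close to $\mc S$, parametrized by center (two parameters) and radius (one parameter), and its tangent space at $0$ equals $\ker L$. Then check that $0$ is a semisimple eigenvalue, which holds here because $L$ is self-adjoint on $L^2(\mc S)$. The generalized principle of linearized stability (Prüss--Simonett--Zacher) then gives the conclusion. For $\|\rho_0\|_{h^{1+\beta}}<\varepsilon$, the solution exists globally, stays $h^{1+\beta}$-close to $0$, and converges exponentially to some $\rho_\infty\in\mathcal E$. In other words, it converges to a geodesic sphere $h^{1+\beta}$-close to $\mc S$. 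Area preservation of the {\sc ap-csf} fixes the radius of the limit, so it encloses the same area as $\Gamma_0$.

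\textbf{Main obstacle.} I expect the crux to be Step 3 together with the identity in Step 2. One has to show that the nearby geodesic circles with displaced centers are genuinely equilibria, which requires constant curvature along them in variable-curvature $\cM$. One also has to show that they exhaust $\ker L$. My first approach is to compute $\kappa$ and $\KGauss$ along a geodesic circle through its Jacobi field $f$, differentiate the family of circles with respect to its center, and identify the resulting derivative with $\cos,\sin$ modulo $O(\rho^2)$. If $\KGauss$ fails to be constant on displaced circles, the fallback is the center-manifold version of the argument, which requires the center manifold to consist of equilibria.
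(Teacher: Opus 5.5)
There is a genuine gap, and it sits in what you call the key geometric input of Step~2. The identity $(2\pi/L_{\mc S})^2=\kappa_{\mc S}^2+\KGauss$, i.e.\ $\psi(\mathfrak r)=(\varphi')^2-\varphi\varphi''=1$ at the radius $\mathfrak r$ of $\mc S$, is false once the ambient curvature varies. From the Jacobi equation $\varphi''=-\KGauss\varphi$ you get $\psi'=\varphi^2\,\partial_r\KGauss$ and $\psi(0)=1$, so
$\psi(\mathfrak r)=1+\int_0^{\mathfrak r}\varphi^2\,\partial_r\KGauss\,dr$.

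The paper proves the theorem only in the Section~7 setting: $\cM$ rotationally symmetric about $\mathfrak p$, $\mc S=\partial B(\mathfrak p,\mathfrak r)$, and $\KGauss$ strictly decreasing in $r$, i.e.\ \eqref{eq:hipotesis_psi}. There $\psi(\mathfrak r)<1$, and your own eigenvalue formula reads $(\psi(\mathfrak r)-n^2)/\varphi(\mathfrak r)^2$. So the modes $\cos,\sin$ are strictly stable, and $\ker L$ consists of the constants only (Proposition~\ref{Prop:spectrum_-L}).

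Step~3 fails for the dual reason. Geodesic circles centred away from the pole are not equilibria, because their curvature $\kcirc$ is not constant along them. By the classification quoted in Section~7, the only closed constant-curvature curves are the circles centred at $\mathfrak p$. Hence $\mathcal E$ near $\mc S$ is the one-parameter family $r\equiv\mathfrak r+s$, not a three-dimensional manifold. The Jacobi-field verification you plan therefore cannot succeed, and the limit is a centred circle, not a displaced one. Your remark that the statement tacitly needs $\mc S$ to be stationary is correct: the paper's argument covers only the centred circle under these hypotheses.

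The repair is what the paper does, and it makes your argument simpler. The eigenvalue $0$ is simple with eigenspace $\{1\}=T_{\mathfrak r}\mathcal E$, and the rest of the spectrum lies below $(\psi(\mathfrak r)-1)/\varphi(\mathfrak r)^2<0$. With this, the paper verifies the hypotheses of Simonett's centre-manifold theorem in the little H\"older scale. It then shows that the one-dimensional centre manifold coincides locally with the centred circles (Proposition~\ref{prop:center_manifold_coincides_with_stable_solutions}). Finally it applies the Escher--Simonett attractivity theorem to get global existence and exponential convergence. The Pr\"uss--Simonett--Zacher normal-stability theorem you cite would do the same job once $\mathcal E$ is taken one-dimensional.

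The rest of your framework matches the paper: the quasilinear reformulation, maximal regularity, and area preservation to fix the limit radius. Your Gauss--Bonnet expression for $h$ is a tidy alternative to the paper's treatment of the nonlocal part as a lower-order perturbation $R_2$. Your three-dimensional picture is correct only when $\psi\equiv 1$, i.e.\ in constant curvature.
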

\noindent Here $h^{1+\beta}(\mathbb S^1)$ denotes the little H\"older space, i.e., the closure of $C^\infty(\mathbb S^1)$ in the  norm of $C^{1,\beta}(\mathbb S^1)$.

\subsection*{Main results in the perspective of the literature}

After the foundational planar work of Gage, Hamilton and Grayson \cite{Gage84, gage_hamilton_heat_equation, grayson1987heat}, Gage extended {\sc csf} to general surfaces \cite{gage_1990_csf_surfaces}, obtaining that initially closed and convex curves remain convex, and converge smoothly to a round point. Subsequent developments, notably by Grayson \cite{grayson1989shortening}, show that embedded curves in a surface with a Riemannian metric that is convex at infinity either contract to a point or subconverge to simple closed geodesics. Recent works study {\sc csf} even on surfaces with conical singularities \cite{rosa}.

In contrast, constrained non-local variants such as Gage’s {\sc ap-csf} are well developed mostly in the planar case, with global existence and convergence to circles for convex curves \cite{Gage1986}. There is also a growing literature on asymptotic circularity under total curvature \cite{Dit}, or star-shapedness \cite{star} assumptions, and {\sc ap-csf} for convex plane curves in an inhomogeneous medium \cite{Ni}. On general Riemannian surfaces, by comparison, there is essentially no previous literature, apart from some  advances in the hyperbolic plane \cite{wei_yang_VPF_Hyperbolic}. This leaves a substantial gap (which is challenging enough to motivate the present work) between the now classical unconstrained {\sc csf} on Riemannian surfaces and the constrained flows that are particularly natural for isoperimetric-type applications.

\smallskip  

In turn, natural higher-dimensional analogues of  \eqref{eq:ccf_f} are the volume-preserving mean curvature flow ({\sc vp-mcf}) and the area-preserving {\sc mcf} ({\sc ap-mcf}), which preserve the hypersurface enclosed volume and the area, respectively. The {\sc vp-mcf} was introduced by Huisken \cite{Huisken_1987_VPMCF} for strictly convex hypersurfaces of $\mathbb R^{n+1}$, showing that 
the flow has a solution on $[0,\infty)$, which stays convex for all time and converges exponentially to a round sphere. The analogous result for the area-preserving case can be found in \cite{McCoy-AP}.

\smallskip
In \cite{Huisken_1987_VPMCF}, Huisken highlighted the challenges involved in preserving convexity during the {\sc vp-mcf} when the ambient space is a general Riemannian manifold. In particular, he constructed examples of convex hypersurfaces in the sphere $\mathbb{S}^{n+1}$ that lose their convexity along the flow, stressing that convexity preservation cannot be taken for granted outside the Euclidean setting. 

\smallskip

Inspired by this example, the second author and Miquel  studied in  \cite{CabezasRivasMiquel2007} the {\sc vp-mcf} in the hyperbolic space $\mathbb H^{n+1}$. Assuming a stronger notion of convexity defined via horospheres (known as $h$-convexity), the flow exists for all time and the evolving hypersurface converges smoothly and exponentially to a geodesic sphere in $\mathbb H^{n+1}$, enclosing the same volume as the initial hypersurface. The $h$-convexity condition (which amounts to $\kappa_i > 1$ for all principal curvatures) was relaxed to positive sectional curvature ($\kappa_i \kappa_j > 1$ for $1\leq i \neq j \leq n$) in \cite{AnW-J}. Moreover, \cite{AndrewsWei2018} developed quermassintegral-preserving curvature flows in $\mathbb{H}^{n+1}$ and proved long-time existence and convergence results for $h$-convex hypersurfaces. Notice that for curves in $\mathbb H^2$ the $h$-convexity condition would mean $\kappa >1$, and it was relaxed to convexity ($\kappa >0$) successfully in \cite{wei_yang_VPF_Hyperbolic}.

\smallskip

  However, little is known in the setting of general negatively curved surfaces where the Gauss curvature is variable but controlled. This motivates our interest of extending \cite{wei_yang_VPF_Hyperbolic} to non-constant negatively-curved ambient surfaces.

\subsection*{Organization of the paper} The paper is organized so as to move from the most general setting to progressively more specific ambient geometries. Section \ref{basic} gathers the main evolution equations and monotonicity properties for length, area and isoperimetric deficit. The typical result that the flow exists forever unless the curvature becomes unbounded is carried out in Section \ref{long-time}. This part has the extra complication that Gauss curvature and its derivatives appear in all the evolution equations and one has to keep track of which orders of derivatives appear in each step in order to show that if the curvature is bounded, then all the higher order derivatives are also under control (cf.~Proposition \ref{Prop:bounds_higher_deriv_kappa_APCSF_LPCF}). Section \ref{Hadamard} contains geometric properties for Hadamard surfaces that are flow independent, such as a new Hessian comparison result or  isoperimetric estimates.  The preservation of convexity and the proof of Theorem \ref{main} (1) is done in Section \ref{convex}, while Section \ref{radius} establishes some uniform control for the inner and outer radius of the evolving domain under the flow. In turn, Section \ref{curvature} is devoted to the support function and the curvature estimates that lead to the proof of long-time existence. Finally, Section \ref{rotation} focuses on the area-preserving flow in the rotationally symmetric case, where we prove the convergence dichotomy and the stability result near geodesic circles.

\section{Basic properties of the flow } \label{basic}
Let $(\cM,g)$ be any Riemannian surface, $\gamma: \mathbb{S}^1 \times [0,\mathcal{T}) \to \cM $  evolving under
\begin{equation}
 \partial_t \gamma  =  F N, \quad \text{with} \quad F:=  h(t) - f(\kappa),
\label{eq:ccf_f}
\end{equation}
and $h(t)$ is chosen to keep area or length constant; more precisely,
\begin{equation} 
	h(t)=\frac{\int_{\mathbb{S}^1} f(\kappa)\kappa^{\alpha} \ ds}{\int_{\mathbb{S}^1} \kappa^\alpha \ ds}.
	\label{eq:def_h_LPCF_ff}
\end{equation}
 Here $f$ is a non-decreasing function of the curvature $\kappa$,
  $\Gamma_t=\gamma(\mathbb{S}^1, t)$, and $v= |\partial_p \gamma|$. We denote $\partial_s = \frac{\partial }{\partial s}$, $\partial_p =\frac{\partial }{\partial p}$, and $\partial_s^{n} = \frac{\partial^{n}}{\partial s^{n}}$.
  
   Recall that the spatial derivative of the unit tangent vector $T = \partial_s \gamma = \frac1{v}\partial_p \gamma$ gives 
\begin{equation} \label{Frenet}
    \partial_s T = - \kappa N.
\end{equation}
   Notice that, as vector fields over $\cM$, $\partial_t$ and $\ps$ should be regarded as $\partial_t \gamma$ and $T$, respectively. By routine computations as in \cite{Gage1986}, we get
\begin{lemma} \label{evol_eq}
    Under the flow \eqref{eq:ccf_f} we have the following evolution equations:
    \begin{enumerate}
        \item[{\rm (a)}] $\partial_t v = F \kappa v$.
        \item[{\rm (b)}] $\nabla_t T =  \partial_s F \, T$ and $\nabla_t N = - \partial_s F \, T.$
    \item[{\rm (c)}] The evolution of the length element $ds = v \, d p$ is given by $\partial_t ds = F \kappa \, ds$.              
    \item[{\rm (d)}] The commutator of the partial derivatives is  $\left[ \partial_s, \partial_t\right] = \kappa F \partial_s$.
    \item[{\rm (e)}] The curvature evolves under $\partial_t \kappa \, = \, - F \left( \KGauss + \kappa^2 \right) - \partial^2_s F$,
    \item[{\rm (f)}] The evolution of the curve’s length $L(t)$ and the enclosed area are given by 
        \begin{equation}
           \frac{d }{dt} L = \int_{\mathbb{S}^1} \kappa F \; ds \quad \text{and} \quad \frac{dA}{dt}= \int_{\mathbb{S}^1} F \ ds,
            \label{eq:lengthevol}
        \end{equation}
    where $\Omega_t$ is the region enclosed by $\Gamma_t$ and  $A(t)$ means the area of $\Omega_t$. 
    \end{enumerate}
\end{lemma}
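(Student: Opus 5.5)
The plan is to derive all identities from the single relation $\partial_t\gamma = FN$, using that $\nabla$ is the Levi-Civita connection of $g$. It is therefore torsion-free, $\nabla_{\partial_t}\partial_p\gamma = \nabla_{\partial_p}\partial_t\gamma$, and metric compatible. The signs are fixed by \eqref{Frenet}, $\nabla_s T = -\kappa N$, which together with $\langle N,T\rangle=0$ and $|N|=1$ forces $\nabla_s N = \kappa T$.

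\emph{Step 1: items (a), (c), (d).} Write $v^2 = \langle \partial_p\gamma,\partial_p\gamma\rangle$. Then
\[
\partial_t v^2 = 2\langle \nabla_p (FN),\partial_p\gamma\rangle = 2v^2\langle \nabla_s(FN),T\rangle = 2v^2 F\kappa ,
\]
because $\langle N,T\rangle=0$ and $\langle\nabla_s N,T\rangle=\kappa$. This gives (a), and (c) follows at once since $ds = v\,dp$. For (d), use $\partial_s = v^{-1}\partial_p$ and $\partial_t\partial_p=\partial_p\partial_t$ to compute
\[
\partial_t\partial_s = -\frac{\partial_t v}{v}\,\partial_s + \partial_s\partial_t = -\kappa F\,\partial_s + \partial_s\partial_t ,
\]
which is exactly $[\partial_s,\partial_t]=\kappa F\partial_s$.

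\emph{Step 2: item (b).} Apply the same torsion-free commutation to $T=\partial_s\gamma$:
\[
\nabla_t T = \nabla_s(FN) - \kappa F\,T = \partial_s F\,N + F\kappa T - \kappa F T = \partial_s F\,N .
\]
The tangential terms cancel, so $\nabla_tT$ comes out as $\partial_s F\,N$, not $\partial_sF\,T$ as printed. Since $\langle \nabla_t N,N\rangle=0$ and $\langle\nabla_tN,T\rangle = -\langle N,\nabla_tT\rangle$, we get $\nabla_t N = -\partial_s F\,T$. So the stated formula for $\nabla_t T$ appears to contain a typo, $T$ in place of $N$. I would record the correct version $\partial_sF\,N$.

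\emph{Step 3: item (e).} This step is the only real obstacle, because the curvature tensor enters through the non-commutation of $\nabla_t$ and $\nabla_s$. From $\kappa = -\langle \nabla_s T, N\rangle$ we differentiate in $t$, using $\langle \nabla_sT,\nabla_tN\rangle = -\kappa\langle N,\nabla_tN\rangle = 0$, to get $\partial_t\kappa = -\langle\nabla_t\nabla_s T,N\rangle$. Next write
\[
\nabla_t\nabla_s T = \nabla_s\nabla_t T + \kappa F\,\nabla_s T + R(\partial_t\gamma,T)T .
\]
The middle term uses the commutator from (d). The last term is the curvature term, with $\langle R(FN,T)T,N\rangle = F\KGauss$ in the sign convention where $\KGauss$ is the sectional curvature. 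For the first term, $\nabla_s(\partial_sF\,N) = \partial_s^2F\,N + \kappa\,\partial_sF\,T$. Taking the $N$-components gives
\[
\partial_t\kappa = -\partial_s^2F + \kappa^2F - F\KGauss .
\]
The step I expect to be delicate is reconciling this sign of the $\kappa^2F$ term with the stated $-F(\KGauss+\kappa^2)$. It depends on the orientation convention, i.e. whether the outer normal makes $\nabla_sT=-\kappa N$ consistent with $\partial_t v = F\kappa v$. I would double-check it on a geodesic circle in the flat plane, where $F=-\kappa$ and $N$ is outward, and adjust the conventions so the paper's formula holds.

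\emph{Step 4: item (f).} Integrate (c) to obtain $L' = \int F\kappa\,ds$. For the area, use the first variation of area, $A' = \int\langle\partial_t\gamma,N\rangle\,ds = \int F\,ds$, which holds because $N$ is the outer normal.
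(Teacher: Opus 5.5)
Your Steps 1, 2 and 4 are fine. You are also right that (b) should read $\nabla_t T=\partial_sF\,N$: since $\langle\nabla_tT,T\rangle=0$, the printed $\partial_sF\,T$ would force $\partial_sF=0$. The companion formula $\nabla_tN=-\partial_sF\,T$, which is the part of (b) the paper uses later, is correct.

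The gap is in Step 3. There you write $\nabla_t\nabla_sT=\nabla_s\nabla_tT+\kappa F\,\nabla_sT+R(\partial_t\gamma,T)T$, and the commutator term has the wrong sign. Your own Step 1 gives $\partial_t\partial_s=\partial_s\partial_t-\kappa F\,\partial_s$, and in Step 2 you correctly used $-\kappa F\,T$. Writing $\nabla_s=v^{-1}\nabla_p$ and using (a),
\[
\nabla_t\nabla_sT=-\kappa F\,v^{-1}\nabla_pT+v^{-1}\big(\nabla_p\nabla_tT+R(\partial_t\gamma,\partial_p\gamma)T\big)=\nabla_s\nabla_tT-\kappa F\,\nabla_sT+R(\partial_t\gamma,T)T .
\]
Since $\nabla_sT=-\kappa N$, the middle term equals $+\kappa^2F\,N$. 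Hence $\langle\nabla_t\nabla_sT,N\rangle=\partial_s^2F+\kappa^2F+F\KGauss$, and $\partial_t\kappa=-F(\KGauss+\kappa^2)-\partial_s^2F$, exactly as stated. Your curvature-term sign was right; only the commutator sign was off.

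Your proposed remedy, adjusting orientation conventions until the stated formula comes out, is not a valid proof step and cannot succeed. The conventions are already fixed (outer $N$, $\nabla_sT=-\kappa N$), and you verified (a) with them. Moreover, your formula $-\partial_s^2F+\kappa^2F-F\KGauss$ has the $\kappa^2F$ and $\KGauss F$ terms with opposite signs. Any change of convention only replaces $\kappa$ and $F$ by $\pm\kappa$ and $\pm F$, which multiplies both terms by the same sign because $\kappa^2$ is even. Their relative sign is therefore geometric, and no orientation choice reconciles your formula with $-F(\KGauss+\kappa^2)$.

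The flat-circle test you propose does expose the error. With $F=-\kappa$, $\kappa=1/\rho$ and $\rho'=-1/\rho$, one gets $\partial_t\kappa=\kappa^3=-F\kappa^2$, whereas your formula gives $-\kappa^3$. Once the commutator sign is corrected, your computation is the routine one the paper defers to, and nothing else needs to change.
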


The PDE in \eqref{eq:ccf_f} is not strictly parabolic, so standard  theory does not apply directly. Nevertheless, the flow is invariant under tangential perturbations (as in \cite[Proposition 1.3.4]{mantegazza_LNMCF}), which allows reformulation as a strictly parabolic system and ensures short time existence and uniqueness of solutions, up to reparametrization. 

As a direct consequence of the definition of the constrained flows and by means of a generalized Hölder inequality (cf. \cite{wei_yang_VPF_Hyperbolic}), one can prove
\begin{prop}\label{prop:APCSF_L_decreases_LPCF_A_increases} 
 Along the flow \eqref{eq:ccf_f}, if $h(t)$ is chosen  so that the enclosed area is preserved, then the length of the curve decreases with time. Conversely, for $h(t)$ taken so that the length is preserved, then the enclosed area increases.
\end{prop}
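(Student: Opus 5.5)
We plan to use Lemma \ref{evol_eq}(f) directly. Write $F = h - f(\kappa)$, with $h$ given by \eqref{eq:def_h_LPCF_ff}. The two relevant choices are $\alpha = 0$, where $h$ is the average of $f(\kappa)$, and $\alpha = 1$, where $h = \int f(\kappa)\kappa\,ds / \int \kappa\,ds$. We also use Gauss--Bonnet in the form $\int \kappa\, ds = 2\pi - \int_{\Omega_t}\KGauss\, dA$. In the cases of interest (e.g. convex curves, or $\KGauss\le 0$) this is positive, so the denominator is positive.

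\emph{Area-preserving case ($\alpha=0$).} Here $\frac{dA}{dt}=\int F\,ds = hL - \int f(\kappa)\,ds = 0$. For the length we compute
\[
\frac{dL}{dt} = \int \kappa (h - f(\kappa))\, ds = \frac{1}{L}\Big(\int \kappa\, ds \int f(\kappa)\, ds - L \int \kappa f(\kappa)\, ds\Big).
\]
Since $f$ is non-decreasing, $\kappa$ and $f(\kappa)$ are similarly ordered. Chebyshev's integral inequality, obtained by symmetrizing
\[
\iint \big(\kappa(s)-\kappa(s')\big)\big(f(\kappa(s))-f(\kappa(s'))\big)\, ds\, ds' \ge 0,
\]
gives $L\int \kappa f \ge \int\kappa \int f$. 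Hence $L' \le 0$.

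\emph{Length-preserving case ($\alpha=1$).} Now $\frac{dL}{dt} = h\int\kappa\, ds - \int \kappa f\, ds = 0$ by construction. For the area,
\[
\frac{dA}{dt} = \int(h-f)\,ds = \frac{1}{\int\kappa}\Big(L\int \kappa f - \int\kappa\int f\Big) \ge 0,
\]
by the same Chebyshev inequality, provided $\int\kappa\,ds>0$.

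The main subtlety is the sign of $\int \kappa\, ds$ (and, if one insists on the generalized Hölder route of \cite{wei_yang_VPF_Hyperbolic} with general $\alpha$ and $f(\kappa)=\kappa$, the positivity of $\kappa$ so that $\kappa^\alpha$ makes sense). We will therefore state the argument for convex curves, or use Gauss--Bonnet on Hadamard surfaces, where $\KGauss\le0$ gives $\int\kappa\,ds\ge 2\pi>0$. For general $\alpha$ with $\kappa>0$ the same scheme applies. One uses the weighted Chebyshev inequality with measure $\kappa^\alpha ds$ for area preservation. For length preservation one uses the analogous identity $h\int\kappa^\alpha = \int f\kappa^\alpha$ together with the fact that $\kappa^{\alpha}$ and $\kappa^{1-\alpha}$ or $f$ are co-monotone (this is Hölder in the case $f=\kappa$).
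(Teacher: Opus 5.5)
Your argument is correct and has the same skeleton as the paper's: compute $L'$ and $A'$ from Lemma~\ref{evol_eq}~(f), insert the $h$ that makes $A'=0$ (resp.\ $L'=0$), and reduce the sign of the other derivative to an integral inequality. The difference is which integral inequality is used.

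The paper invokes a generalized H\"older inequality as in \cite{wei_yang_VPF_Hyperbolic}. That inequality is adapted to $f(\kappa)=\kappa$ with weights $\kappa^\alpha$, and for $\alpha\in\{0,1\}$ it reduces to $\left(\int\kappa\,ds\right)^2\le L\int\kappa^2\,ds$. You instead use Chebyshev's integral inequality $\int\kappa\,ds\int f(\kappa)\,ds\le L\int\kappa f(\kappa)\,ds$. It needs only that $f$ be non-decreasing, and no sign on $\kappa$ in the inequality itself. So your route covers the general flow \eqref{eq:ccf_f} in which the proposition is stated, whereas H\"older does not directly handle an arbitrary non-decreasing $f$.

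You also make explicit a hypothesis the paper leaves implicit. In the length-preserving case, $h$ is only defined, and $A'\ge 0$ only follows, when $\int\kappa\,ds>0$. On a Hadamard surface this is guaranteed by Gauss--Bonnet, which gives $\int\kappa\,ds\ge 2\pi$, as in the proof of Lemma~\ref{Had_low}~(c).

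One wording issue in your closing aside: for $\alpha\notin\{0,1\}$ neither area nor length is preserved. What the weighted Chebyshev argument yields there is $L'\le 0$ for $\alpha\le 1$ and $A'\ge 0$ for $\alpha\ge 0$, not statements ``for area preservation'' or ``for length preservation''. This does not affect your proof of the proposition.
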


\begin{cor}[Monotonicity of the isoperimetric deficit] \label{lemma:monotonicityisoperimetricdeficit}
	Under \eqref{eq:ccf_f} with $h(t)$ as in \eqref{eq:def_h_LPCF_ff} for $\alpha =0,1$, the isoperimetric deficit 
	\begin{equation}
	\Delta(t):=L(t)^2-4\pi A(t) - a^2 A(t)^2 ,
	\end{equation}
	is monotone non-increasing.
	\begin{proof}
		In the area-preserving case, as length is non-increasing by Proposition \ref{prop:APCSF_L_decreases_LPCF_A_increases}, it holds
		$$  \Delta'(t) = 2 L(t)  L'(t) \leq 0.$$
		In turn, when the length is preserved, the area is non-decreasing.  Therefore,
		$$
		\Delta'(t) = -4\pi  A'(t) - a^2 A(t)   A'(t) \leq 0.
		$$

	\end{proof}
\end{cor}

\section{Results on surfaces with some assumptions on the Gauss curvature} \label{long-time}

\subsection{Bounds on the higher derivatives of the curvature and the Gauss curvature}
 
     Our argument to get higher-order derivative estimates of $\kappa$ and $\KGauss$ under  the flow \eqref{eq:ccf_f} follows the usual approach, but we have to  control extra terms due to the ambient Gauss curvature, which is neither zero nor constant as in the previous literature. Hence  the evolution equations for the derivatives of $\kappa$ involve derivatives of $\KGauss$ as well, and it becomes essential to track carefully which orders of derivatives of $\kappa$ and $\KGauss$ appear in each term.

 Hereafter $\Box:= \partial_t -\partial_s^2$ denotes the heat operator.  Arguing by induction and exploiting the formulas in Lemma \ref{evol_eq} (d) and (e), we reach    
    \begin{lemma}[Evolution equations for spatial derivatives of $\kappa$] \label{prop:evolutionequationskappa} 
        Let $\gamma:\mathbb{S}^1\times [0,\mathcal{T})\to \cM$ be a solution of \eqref{eq:ccf_kappa} with $h(t)$ as in \eqref{eq:def_h_LPCF_f} for any $\alpha$. Then the $\ell$-th spatial derivative of the curvature satisfies
       
           \begin{equation}
           \Box \, \partial_s^{\ell}\kappa = \partial_s^{\ell}\kappa + \left[\left( (\ell+3)\kappa - (\ell+2)h\right)\kappa \  + \KGauss \right] \partial_s^{\ell}\kappa + P_{\ell-1}  . 
           \end{equation}
        \label{eq:evolhigherderivcurvatura}
        where $P_\ell=P_\ell(h, \kappa, \partial_s \kappa, ... , \partial_s^\ell \kappa, \partial_s \KGauss, ..., \partial_s^{\ell+1} \KGauss)$ and $P_0= (\kappa - h)\ps \KGauss$.
 \end{lemma}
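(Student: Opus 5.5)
The plan is to argue by induction on $\ell\geq 1$, using only the commutator in Lemma \ref{evol_eq} (d) and the curvature evolution in Lemma \ref{evol_eq} (e), with $f(\kappa)=\kappa$, so that $F=h-\kappa$. Two facts are used throughout. First, $h=h(t)$ is spatially constant, so $\partial_s h=0$. Second, $\partial_s$ commutes with $\partial_s^2$, so $\Box\,\partial_s$ and $\partial_s\Box$ differ only through the time-space commutator. By Lemma \ref{evol_eq} (d), $[\partial_t,\partial_s]=-\kappa F\,\partial_s=\kappa(\kappa-h)\,\partial_s$, and hence for any function $\phi$
\begin{equation}
\Box\,\partial_s\phi=\partial_s(\Box\phi)+\kappa(\kappa-h)\,\partial_s\phi .
\end{equation}
Comparing with the case $\ell=1$ computed below, the displayed formula is to be read for $\ell\geq 1$ and without the isolated summand $\partial_s^\ell\kappa$, which looks like a typographical duplication. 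I will prove
\begin{equation}
\Box\,\partial_s^{\ell}\kappa=\big[\big((\ell+3)\kappa-(\ell+2)h\big)\kappa+\KGauss\big]\partial_s^{\ell}\kappa+P_{\ell-1}.
\end{equation}

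\emph{Base case $\ell=1$.} Lemma \ref{evol_eq} (e) with $F=h-\kappa$ gives $\partial_t\kappa=(\kappa-h)(\KGauss+\kappa^2)+\partial_s^2\kappa$, that is,
\begin{equation}
\Box\,\kappa=(\kappa-h)(\KGauss+\kappa^2).
\end{equation}
Differentiating in $s$ yields
\begin{equation}
\partial_s(\Box\kappa)=(3\kappa^2-2h\kappa+\KGauss)\,\partial_s\kappa+(\kappa-h)\,\partial_s\KGauss .
\end{equation}
The commutator then adds $(\kappa^2-h\kappa)\,\partial_s\kappa$. Altogether,
\begin{equation}
\Box\,\partial_s\kappa=\big[(4\kappa-3h)\kappa+\KGauss\big]\partial_s\kappa+(\kappa-h)\,\partial_s\KGauss ,
\end{equation}
which is exactly the claim with $P_0=(\kappa-h)\,\partial_s\KGauss$.

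\emph{Inductive step.} Assume the formula holds for some $\ell\geq 1$, and apply the commutator identity with $\phi=\partial_s^\ell\kappa$. Differentiating the bracket term gives three contributions:
\begin{equation}
\big[\big((\ell+3)\kappa-(\ell+2)h\big)\kappa+\KGauss\big]\partial_s^{\ell+1}\kappa+\big[(2\ell+6)\kappa-(\ell+2)h\big]\partial_s\kappa\,\partial_s^\ell\kappa+\partial_s\KGauss\,\partial_s^\ell\kappa .
\end{equation}
The commutator contributes $(\kappa^2-h\kappa)\,\partial_s^{\ell+1}\kappa$. Adding this to the first contribution raises the coefficients $\ell+3$ and $\ell+2$ by one, which is precisely the claimed leading coefficient for $\ell+1$. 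The remaining terms are
\begin{equation}
\big[(2\ell+6)\kappa-(\ell+2)h\big]\partial_s\kappa\,\partial_s^\ell\kappa+\partial_s\KGauss\,\partial_s^\ell\kappa+\partial_s P_{\ell-1}.
\end{equation}
These involve only $h,\kappa,\dots,\partial_s^\ell\kappa$ and $\partial_s\KGauss$. Moreover, $\partial_s P_{\ell-1}$ involves at most $\partial_s^\ell\kappa$ and $\partial_s^{\ell+1}\KGauss$, by the chain rule applied to the polynomial dependence of $P_{\ell-1}$ and since $\partial_s h=0$. Hence these terms define an admissible $P_\ell$, completing the induction.

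\emph{Main obstacle.} The only real care needed is the bookkeeping. One must check that $\partial_s P_{\ell-1}$ never produces $\partial_s^{\ell+1}\kappa$ or $\partial_s^{\ell+2}\KGauss$. This holds because $P_{\ell-1}$ contains at most $\partial_s^{\ell-1}\kappa$ and $\partial_s^{\ell}\KGauss$, so a single derivative raises each top order by exactly one. For the same reason the $\KGauss$-terms never contaminate the leading coefficient: after the base case, the spatial derivatives of $\KGauss$ arising in the lower-order terms are simply absorbed into $P_\ell$.
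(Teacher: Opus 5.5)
Your proof is correct and follows the route the paper indicates: induction on $\ell$, using the commutator of Lemma \ref{evol_eq} (d) and the curvature evolution of Lemma \ref{evol_eq} (e), with the leading coefficients and the orders of derivatives in $P_\ell$ working out at every step. You are also right that the isolated summand $\partial_s^\ell\kappa$ in the displayed formula is a typo: the paper's own proof of Proposition \ref{Prop:bounds_higher_deriv_kappa_APCSF_LPCF} uses $\Box\,\partial_s\kappa=[(4\kappa-3h)\kappa+\KGauss]\partial_s\kappa+P_0$ without that term.
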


\begin{prop}[Higher derivatives of the Gauss curvature $\KGauss$] \label{prop:derivofKGauss} 
    Let $(\cM, g)$ be a  surface and $\gamma:\mathbb{S}^1 \to \cM$. For $n\geq 1$, set $\Theta:= \prod_{j=1}^{N_{mv}^{n}} \partial_s^{i_j} \kappa$, then
    \begin{equation}
        \nabla^n \KGauss (\underbrace{\partial_s, ..., \partial_s}_{n\text{-times}}) = \partial_s^n\KGauss + \sum_{I+m < n}\sum_{v\in V_m} C_{mv}^n \Theta \nabla^m \KGauss (v) ,
        \label{eq:ncovderivKGauss}
    \end{equation}
    where $I= \sum_{j=1}^{N_{mv}^n} i_j$, with $i_j \in \mathbb N_0= \mathbb N \cup \{0\}$; $C_{mv}^n$ is a constant depending on $m$, $v$ and $n$, which can be zero; $N_{mv}^n \in \mathbb N_0$, with $N_{mv}^n < n$. If $N_{mv}^n = 0$, then $\Theta = 1$ and $I = 0$, and $V_m := \{ \partial_s , N \}^m$.   Thus, $\nabla^m \KGauss(v)$ is a shortcut to denote $\nabla^m\KGauss$ acting on $m$ copies of $\ps$ and/or $N$.

    \begin{proof}
     Arguing by induction, the case $n=1$ is deduced noticing that  $\KGauss\in C^\infty(\mathcal M)$, then
     \begin{equation}
         \nabla \KGauss (\partial_s) = \nabla_{s} \KGauss = \partial_s \KGauss ,
     \end{equation}
     where $\nabla_s = \nabla_T$. Now let us assume \eqref{eq:ncovderivKGauss} for $m\leq n$. We will prove it for $n+1$: by the definition of the covariant derivative of a tensor field and Frenet formula, we reach
    \begin{align}
        \nabla^{n+1} \KGauss (\partial_s, ... ,\partial_s) 
        &= \nabla_s \left(\nabla^n \KGauss (\partial_s,..., \partial_s) \right) - \sum_{i=1}^n \nabla^n \KGauss (\partial_s,..., \underbrace{-\kappa N}_{i\text{-th position}},..., \partial_s),
    \end{align}
where the last term already has the sought form. Then the induction hypothesis  yields
    \begin{align}
       \nabla_s \left(\nabla^n \KGauss (\partial_s,..., \partial_s)\right) & = \partial_s^{n+1}\KGauss + \sum_{I+m < n}\sum_{v\in V_m} C_{mv}^n \ \Big(\partial_s\Theta \nabla^m \KGauss (v) + \Theta \nabla_s \nabla^m \KGauss (v)\Big).
    \end{align}
    By the product rule, $\partial_s\Theta$ produces a finite sum of $N_{mv}^{n}$ terms of the same type, with the multi-index $\{i_1,...,i_{N_{mv}^{n}}\}$ updated by adding $1$ to one of the entries each time. Hence the expression for the term involving $\partial_s\Theta$ is of the desired form.
    
     Moreover, as before, we get
    \begin{align}
        \nabla_s \nabla^m \KGauss (v) & =  \nabla^{m+1} \KGauss (v) + \sum_{i=1}^{m}  \nabla^m\KGauss (v_1,..., \partial_s v_i, ..., v_{m}) = \nabla^{m+1} \KGauss (v) \pm \sum_{i=1}^{m}   \kappa \nabla^m \KGauss(\widetilde{v}) ,
    \end{align}
where $\widetilde v$ denotes the corresponding vector obtained from $v$ by replacing one entry, and the sign depends on the Frenet relation used. Thus this expression is again of the desired form, which joining all the terms concludes the induction.

Observe that, for each $n+1$, the constants $C_{mv}^{n+1}$, which depend on both $m$ and $v$, are uniformly bounded by a constant  depending only on $n+1$. Indeed, each $C_{mv}^{n+1}$ is obtained by combining finitely many constants from the previous induction step. Therefore, the summation remains finite at every stage, even if the combinations of terms become increasingly complex.
\end{proof}

\end{prop}

Throughout this subsection, assume that the ambient surface $\cM$ satisfies
\begin{equation} \label{K_bdd}
    \sup_{0\leq \ell \leq n} \big| {\nabla}^\ell \KGauss \big| \leq {C}_n
\end{equation}
for some $n \in \mathbb{N}$, where ${C}_n$ are fixed constants. Our next  aim  is to obtain bounds for $|\partial_s^n \KGauss|$, and also to bound higher order spatial derivatives of $\kappa$ (exactly up to order $n$, see Proposition \ref{Prop:bounds_higher_deriv_kappa_APCSF_LPCF}).

\begin{cor}[Bounds of the higher spatial derivatives of  $\KGauss$]\label{prop:bounds_of_deriv_KGauss}
Let $\gamma:\mathbb{S}^1 \to \cM$ satisfying
\begin{equation}
     \sup_{0\leq \ell \leq n-1}\big| \partial_s^\ell \kappa \big| \leq \overline{\kappa}_{n -1}
\end{equation}
for constants $\overline{\kappa}_{n-1}$ depending on $n$ and the initial curve. Then for each $n\geq 1$ it holds
\begin{equation}
    \left| \partial_s^n \KGauss \right| \leq   \widetilde{C}_n,
\end{equation}
where $\widetilde{C}_n$ denotes a constant depending on $n,\overline{\kappa}_0$ and $\gamma_0$.
\begin{proof}
Using the formula \eqref{eq:ncovderivKGauss} and that the constants $C_{mv}^n$ are uniformly bounded, we reach
\begin{align}
            | \partial_s^n\KGauss  | & \leq \left| \nabla^n \KGauss (\partial_s, ..., \partial_s) \right| + \sum_{I+m < n}\sum_{v\in V_m} |C_{mv}^n| |\Theta| \left|\nabla^m \KGauss (v) \right| \leq {C}_n + c(n) \left(   \overline{\kappa}_{n -1} \right)^I  {C}_n
            \\ & \leq C_n\big(1+c(n) \max\{1, \left(\overline{\kappa}_{n -1} \right)^n\}\big) =:\widetilde{C}_n.
\end{align}
\end{proof}
\end{cor}

We aim to obtain $t$-independent upper bounds for $|\partial_s^n \kappa|$, $n \geq 1$, any constrained curvature flow (that is, taking any value of $\alpha$ in the definition of $h(t)$).

\begin{prop}[Bounds of the higher derivatives of $\kappa$]\label{Prop:bounds_higher_deriv_kappa_APCSF_LPCF}
Let $\gamma:\mathbb{S}^1 \times [0,\mathcal{T}) \to \cM$ be a solution of the flow \eqref{eq:ccf_kappa}. Suppose  that there is a uniform constant $\overline{\kappa}_0 > 0$ such that
\begin{equation}
    |\kappa| (p,t) \leq \overline{\kappa}_0, \qquad \text{for all } (p,t)\in \mathbb{S}^1\times [0,\mathcal{T}).
\end{equation}
Then, there exists a constant $\overline{\kappa}_n = \overline{\kappa}_n(n,\gamma_0, \overline{\kappa}_0, \widetilde{C}_n)$ such that
\begin{equation}
    \sup_{1\leq \ell \leq n} \big| \partial_s^\ell \kappa \big|(p,t) \leq \overline{\kappa}_n, \qquad \text{for all } (p,t)\in \mathbb{S}^1\times [0,\mathcal{T}).
\end{equation}
\end{prop}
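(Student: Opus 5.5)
We argue by induction on $n$, in the spirit of Huisken's estimates for the volume-preserving flow. The hypothesis \eqref{K_bdd} is understood to hold up to the order needed, and Corollary \ref{prop:bounds_of_deriv_KGauss} is fed by the previous induction step.

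\textbf{Basic bounds.} First note that $|h(t)|\le \overline{\kappa}_0$. For $\alpha=0$ this is immediate. For $\alpha=1$ it holds as long as $\kappa\ge 0$, which is the setting where the length-preserving flow is meaningful. Hence the zero-order coefficient in Lemma \ref{prop:evolutionequationskappa}, namely $(\ell+3)\kappa^2-(\ell+2)h\kappa+\KGauss$, is bounded by a constant $c_\ell$ depending on $\ell$, $\overline{\kappa}_0$ and $C_0$.

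\textbf{Induction step.} Assume the bound holds for $\partial_s^j\kappa$ for all $j\le \ell-1$. By Corollary \ref{prop:bounds_of_deriv_KGauss}, $|\partial_s^j\KGauss|\le \widetilde C_j$ for $j\le \ell$. Therefore $P_{\ell-1}$, which involves only $\kappa,\dots,\partial_s^{\ell-1}\kappa$ and $\partial_s\KGauss,\dots,\partial_s^{\ell}\KGauss$, is uniformly bounded.

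\textbf{Evolution of the square.} Compute
\begin{equation}
\Box (\partial_s^\ell\kappa)^2 = -2(\partial_s^{\ell+1}\kappa)^2 + 2\big[\dots\big](\partial_s^\ell\kappa)^2 + 2P_{\ell-1}\partial_s^\ell\kappa \le -2(\partial_s^{\ell+1}\kappa)^2 + C(\partial_s^\ell\kappa)^2 + C.
\end{equation}
The coefficient $C$ on $(\partial_s^\ell\kappa)^2$ is positive, so the maximum principle alone gives only exponential growth. Since we want $t$-independent bounds, we use the Shi/Huisken trick instead.

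\textbf{Auxiliary quantity.} Set
\begin{equation}
G=(\partial_s^\ell\kappa)^2+ B(\partial_s^{\ell-1}\kappa)^2 .
\end{equation}
By the inductive formula with $\ell-1$,
\begin{equation}
\Box (\partial_s^{\ell-1}\kappa)^2\le -2(\partial_s^{\ell}\kappa)^2 + C .
\end{equation}
Choosing $B>C$, we obtain
\begin{equation}
\Box G\le -(\partial_s^\ell\kappa)^2 + C' \le -\tfrac1{2}G + C'' ,
\end{equation}
where the last step uses $(\partial_s^{\ell-1}\kappa)^2\le \overline{\kappa}_{\ell-1}^2$. The maximum principle then yields
\begin{equation}
\max G\le \max\{\max G(0),\,2C''\},
\end{equation}
which is uniform in time and gives $\overline{\kappa}_\ell$.

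The case $\ell=1$ works identically: in it $B(\partial_s^{0}\kappa)^2 = B\kappa^2$, and
\begin{equation}
\Box\kappa^2=-2(\partial_s\kappa)^2+\text{bounded},
\end{equation}
because $\Box\kappa$ is bounded by Lemma \ref{evol_eq}(e), whose lower-order terms are bounded by $\overline{\kappa}_0$ and $C_0$.

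\textbf{Main obstacle.} The delicate point is the bookkeeping in Lemma \ref{prop:evolutionequationskappa} and Proposition \ref{prop:derivofKGauss}. We must check that $P_{\ell-1}$ really contains no $\partial_s^\ell\kappa$ and no $\partial_s^{\ell+1}\KGauss$. Otherwise either the inductive bound breaks, or $\partial_s^{\ell+1}\KGauss$ would require $\partial_s^\ell\kappa$ via Corollary \ref{prop:bounds_of_deriv_KGauss}, which is circular. If a linear term $c\,\partial_s^{\ell}\kappa\,\partial_s\KGauss$ did appear in $P_{\ell-1}$, it would be harmless: it contributes $\le C(\partial_s^\ell\kappa)^2$ to $\Box(\partial_s^\ell\kappa)^2$ and is absorbed by the choice of $B$.

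A further subtlety is a term $\partial_s^{\ell+1}\KGauss$ arising at top order. If one occurs, we treat it in $2P_{\ell-1}\partial_s^\ell\kappa$ by writing it through \eqref{eq:ncovderivKGauss}. Its only component not bounded by \eqref{K_bdd} is of the form $\nabla^m\KGauss\cdot\partial_s^{\ell}\kappa$ with bounded coefficient, so it gives at most $C(\partial_s^\ell\kappa)^2$ and is absorbed again.
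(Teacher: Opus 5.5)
Your proof is correct and follows essentially the same route as the paper. The paper also argues by induction on the order, using the auxiliary function $(\partial_s^{\ell}\kappa)^2+\Lambda(\partial_s^{\ell-1}\kappa)^2$ (which for $\ell=1$ is $(\partial_s\kappa)^2+\Lambda\kappa^2$), with $\Lambda$ chosen large so that $\Box$ of it is at most $-(\text{itself})+C$, then closing with the maximum principle after bounding $P_{\ell-1}$ through the induction hypothesis and Corollary~\ref{prop:bounds_of_deriv_KGauss}; your caveat that $|h|\le\overline{\kappa}_0$ needs $\kappa\ge 0$ when $\alpha=1$ is a fair refinement of a point the paper asserts without comment.
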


 \begin{proof}
  For simplicity, we will denote by $\mathbf{C}$ any  constant depending on  $n$, $\gamma_0$, and $\overline{\kappa}_{0}$; despite the explicit meaning may change from line to line.  We prove the claim by induction:  by assumption,  the hypotheses of Corollary \ref{prop:bounds_of_deriv_KGauss} are satisfied for $n=1$, and we obtain
\begin{equation}
    |\ps \KGauss | \leq \widetilde{C}_1(\overline{\kappa}_0, \gamma_0).
\end{equation}

   Let $\Lambda = \Lambda(\overline{\kappa}_0, C_0) >0$ be a  constant to be determined later, and set $\mathscr F := (\ps \kappa)^2 + \Lambda \kappa^2$. By definition of $h$, we get $|h(t)|\leq \overline{\kappa}_0$. The latter, together with Lemma \ref{evol_eq} and Lemma \ref{prop:evolutionequationskappa},  yields
 \begin{align*}
     \Box \mathscr F = & -2 \left(\ps^2\kappa\right)^2  + 2 \ps\kappa\left[\left( 4\kappa - 3h\right)\kappa \  \ps\kappa + \KGauss \ \ps\kappa + P_0 \right] +2\Lambda\kappa (\kappa -h)(\KGauss+\kappa^2) - 2 \Lambda\left(\ps\kappa\right)^2 \\
      \leq & \;\ 2  \left( 7 \, \overline{\kappa}_0^2  + {C}_0 - \Lambda \right) \left( \ps\kappa\right)^2   + 4 \, \overline{\kappa}_0 \widetilde C_1 |\ps \kappa|    +4\Lambda\, \overline{\kappa}_0^2 (\widetilde C_0 +\overline{\kappa}_0^2) \\
      \leq & \;\ 2  \left( 7 \, \overline{\kappa}_0^2  + {C}_0 - \Lambda + 1\right) \left( \ps\kappa\right)^2   + 4 \, (\overline{\kappa}_0 \widetilde C_1)^2 +  \mathbf{C},
 \end{align*}
where we have applied Young's inequality. Then, setting $\Lambda( \overline{\kappa}_0, \widetilde{C}_n) : = 7 \, \overline{\kappa}_0^2  + {C}_0 + \frac{3}{2}$, we reach
  \begin{align}
     \Box \mathscr F & 
     \leq  - \left( \partial_s\kappa\right)^2   + \mathbf{C}
     =  - \left( \partial_s\kappa\right)^2 - \Lambda \kappa^2 + \Lambda \kappa^2   + \mathbf{C}      \leq  - \mathscr F  + \mathbf{C}_1.
     \label{eq:operadordelcalorfitakamblambdafixat}
 \end{align}
Assume, that $\mathscr F$ reaches a maximum value $K > \max \{ \max_{p \in \mathbb{S}^1} \mathscr F(\cdot,0) , \ \mathbf{C}_1 \}$
for the first time. Then, at that time $0 \leq \Box \mathscr F \leq - K + \mathbf{C}_1 < 0$, which is a contradiction. Hence, for all $t\in [0,\mathcal{T})$
\begin{equation}
        \max_{  \mathbb{S}^1}|\partial_s \kappa  | (\cdot,t)
        \leq \max_{ \mathbb{S}^1}\mathscr F(\cdot,t)  
        \leq \max \{ \max_{  \mathbb{S}^1}\mathscr F(\cdot,0) , \ \mathbf{C}_1\} 
         = \mathbf{C}_{1}^*( \gamma_0,\overline{\kappa}_0, \widetilde{C}_1 ),
\end{equation}
which implies
\begin{equation}
    \sup_{0\leq \ell \leq 1} | \partial_s^\ell \kappa| (p,t) \leq \max \{ \mathbf{C}_{1}^* , \overline{\kappa}_0 \} =: \overline{\kappa}_{1}.
\end{equation}

Now, by induction assume that $\sup_{0\leq \ell \leq m}\big| \partial_s^\ell \kappa   \big|\leq \overline{\kappa}_{m}$
for some $m$ with $1 \leq m < n$. To prove the claim for $m+1$, notice that since $m < n$, by hypothesis we have
 \begin{equation}
 \sup_{0\leq \ell \leq m+1} \big| {\nabla}^\ell \KGauss \big| \leq {C}_n    .
  \label{eq:induction_hyp_2_fitar_deriv_kappa}
 \end{equation}
Therefore Corollary \ref{prop:bounds_of_deriv_KGauss} leads to
\begin{equation}
    |\ps^{\ell}\KGauss| \leq   \widetilde{C}_{\ell} \qquad \text{for all} \quad 1\leq \ell \leq m+1
     \label{eq:induction_hyp_3_fitar_deriv_kappa}
\end{equation}

Now let $\Lambda$ be again a positive constant to be chosen later, and set $\mathscr G :=(\partial_s^{m+1}\kappa)^2 + \Lambda (\partial_s^m \kappa )^2$. Using the previous bounds and Lemma~\ref{prop:evolutionequationskappa}, we have
 \begin{align}
     \Box \mathscr G       \leq & \  2 \left[\left( (m+4)\kappa - (m+3)h\right)\kappa \ + \widetilde{C}_n  - \Lambda \right] (\partial_s^{m+1}\kappa)^2   + 2\,\partial_s^{m+1}\kappa \ 
      P_m  +  \Lambda \big[ \mathbf{C} + P_{m-1} \big],
 \end{align}
where each $P_k$ depends polynomially on the terms
\begin{equation}
    h, \kappa, \partial_s \kappa, \ldots, \partial_s^k \kappa \quad \text{and} \quad \partial_s \KGauss, \ldots, \partial_s^{k+1} \KGauss.
\end{equation}
In particular, they are uniformly bounded by the induction hypothesis and \eqref{eq:induction_hyp_3_fitar_deriv_kappa}. The rest of the argument to achieve the desired bounds is completely analogous to the case $m = 1$. 
 \end{proof}

\subsection{Long time existence}
We show that under \eqref{eq:ccf_kappa}  the solution can only develop singularities if the curvature becomes unbounded. The following theorem formalizes this criterion and its proof adapts the approach of~\cite[pp.~257ff]{Huisken84}, with modifications arising from the non-constant Gauss curvature. Accordingly, we will explicitly indicate just the parts that differ from Huisken’s original proof.

\begin{thm}\label{teo:k_unbdd_or_T_infty}
	Let $(\cM,g)$ be a Riemannian surface satisfying
	\begin{equation}
	\sup_{\mathcal M} |\KGauss | \leq {C}_0
	\end{equation}    
	and let  $\gamma:\mathbb{S}^1 \times [0,\mathcal{T}) \to \cM$ be a solution of the flow \eqref{eq:ccf_kappa} with $h(t)$ as in  \eqref{eq:def_h_LPCF_f}  for any $\alpha$ and $[0,\mathcal{T})$ the maximal time interval where the solution exists. Then either
	\begin{align}
	\mathcal{T}=\infty \qquad \text{or} \qquad \max_{\Gamma_t}|\kappa| \quad \text{ becomes unbounded as }t\to \mathcal{T}.
	\end{align}
\end{thm}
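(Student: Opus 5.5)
We argue by contradiction. Assume $\mathcal T<\infty$ and that $|\kappa|\le \overline{\kappa}_0$ on $\mathbb S^1\times[0,\mathcal T)$. The goal is to show that $\gamma(\cdot,t)$ converges smoothly as $t\to\mathcal T$ to an embedded smooth curve $\gamma(\cdot,\mathcal T)$. Restarting the flow from this curve via short time existence then extends the solution past $\mathcal T$, contradicting maximality.

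\textbf{Step 1: derivative bounds.} Proposition \ref{Prop:bounds_higher_deriv_kappa_APCSF_LPCF} gives $|\partial_s^\ell\kappa|\le \overline{\kappa}_\ell$ for all $\ell$. This uses the ambient bounds \eqref{K_bdd}. These bounds hold automatically on $\overline{\bigcup_t\Gamma_t}$, which is compact: since $|F|\le 2\overline{\kappa}_0$ (recall $|h|\le\overline{\kappa}_0$), the curves move with bounded speed and remain in a fixed compact set of $\cM$ for $t<\mathcal T<\infty$. The only place the hypothesis $\sup|\KGauss|\le C_0$ enters explicitly is here, in the $\Lambda$-choice at first order. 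Higher derivatives of $\KGauss$ are uniformly bounded on that compact set, so Corollary \ref{prop:bounds_of_deriv_KGauss} controls $\partial_s^n\KGauss$.

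\textbf{Step 2: parametrization control and $C^0$ convergence.} By Lemma \ref{evol_eq}(a), $|\partial_t \log v|\le |F\kappa|\le 2\overline{\kappa}_0^2$, so $v$ stays between two positive constants on $[0,\mathcal T)$. The inequality $d(\gamma(p,t_1),\gamma(p,t_2))\le 2\overline{\kappa}_0|t_1-t_2|$ shows that $\gamma(\cdot,t)$ is uniformly Cauchy. Hence it converges in $C^0$ to a continuous limit $\gamma(\cdot,\mathcal T)$.

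\textbf{Step 3: higher derivatives in $p$.} This is where the argument departs from Huisken's, and it is the main obstacle. One must convert the arc-length bounds into bounds for $\partial_p^k\gamma$ and $\partial_p^k v$ in local coordinates of the compact region. I would follow the induction of \cite{Huisken84}:
\begin{itemize}
\item Differentiate $\partial_t v=F\kappa v$ in $p$ and use $\partial_p=v\,\partial_s$ to bound $\partial_t\partial_p^k v$ in terms of lower-order quantities, which are bounded.
\item Write $\partial_p^k\gamma$ through covariant derivatives $\nabla_s^{j}T$. These are expressed via Frenet \eqref{Frenet} in terms of $\partial_s^i\kappa$.
\item The extra terms involve Christoffel symbols of $g$ and their derivatives, evaluated along the curve.
\end{itemize}
The extra care compared with Huisken is in checking that the Christoffel-symbol terms, and the ambient curvature terms $\KGauss$ and $\partial_s^n\KGauss$ appearing in $\partial_t\kappa$ and its derivatives (Lemma \ref{prop:evolutionequationskappa}), only involve orders already controlled at each step. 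The bookkeeping of Proposition \ref{prop:derivofKGauss} is exactly what ensures this. With these bounds, $\partial_t\partial_p^k\gamma$ is bounded for every $k$, so $\gamma(\cdot,t)\to\gamma(\cdot,\mathcal T)$ in $C^\infty$.

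\textbf{Step 4: the limit is admissible.} The limit is an immersion, since $v$ is bounded below. It is embedded because embeddedness is preserved up to $\mathcal T$ under smooth convergence with bounded curvature: a self-touching would contradict the avoidance and strong maximum principle for the local equation, with $h$ a smooth function of $t$. This yields the contradiction with maximality described at the start.
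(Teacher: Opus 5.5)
Your Steps 1--3 follow the paper's proof. Bounded normal speed confines the curves to a compact set, so Proposition \ref{Prop:bounds_higher_deriv_kappa_APCSF_LPCF} and Corollary \ref{prop:bounds_of_deriv_KGauss} bound all $\partial_s^\ell\kappa$ and $\partial_s^\ell\KGauss$. Gr\"onwall applied to $\partial_t v=F\kappa v$ keeps $v$ away from zero. Bounds on the time derivatives of the higher derivatives of $\gamma$ (the paper phrases them via $\nabla_t\nabla_s^m\gamma$) then give smooth convergence, as in Huisken.

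The step that fails is Step 4. Embeddedness is \emph{not} preserved by an avoidance argument for \eqref{eq:ccf_kappa}, even if you freeze $h(t)$ as a given function of time. Suppose two arcs of $\Gamma_t$ approach each other across the exterior of $\Omega_t$. Their outer normals point towards each other, so the term $h(t)N$ pushes both arcs together. Write them locally as graphs $u_1<u_2$. The gap $w=u_2-u_1$ then satisfies an equation of the form $\partial_t w=a\,\partial_x^2w+b\,\partial_xw+c\,w-h\,\theta$ with $\theta>0$; in the flat model $\theta=\sqrt{1+(\partial_xu_1)^2}+\sqrt{1+(\partial_xu_2)^2}$. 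When $h>0$ the forcing term $-h\theta$ has the wrong sign and is not proportional to $w$. So neither the weak nor the strong maximum principle prevents $w$ from reaching zero. Comparison for $\partial_t\gamma=(h(t)-\kappa)N$ only works between equally oriented arcs (e.g.\ nested curves). This is exactly the mechanism behind the Mayer--Simonett self-intersections recalled in the introduction.

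Fortunately the step is superfluous. The theorem concerns the maximal existence time of the PDE solution, and short-time existence holds for smooth closed \emph{immersed} initial curves. The paper restarts from $\gamma(\cdot,\mathcal T)$ knowing only that it is a smooth immersion, which you already get from the lower bound on $v$. Remove the embeddedness claim (or replace ``embedded'' by ``immersed'') and your argument coincides with the paper's.
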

 
\begin{proof}
    Suppose $\mathcal{T} < \infty$ and that  we can find some constant $\overline{\kappa}_0< \infty$ such that
    \begin{equation}
        \max_{\mathbb{S}^1}|\kappa|(\cdot,t) \leq \overline{\kappa}_0 \quad \text{on} \quad [0,\mathcal{T}).
        \label{eq:condiciokfitada}
    \end{equation}
    Then for any constrained flow $|h(t)|\leq \overline{\kappa}_0$ and for all $p\in \mathbb{S}^1$ and $\tau >\frak t$ in $(0,\mathcal{T})$, we have
     \begin{align}
        \text{dist}_{g}\big(\gamma(p,\tau), \gamma(p,\frak t)\big) 
        & \leq \int_\frak t^{\tau} \left| \partial_t \gamma\right| dt 
        = \int_\frak t^{\tau} \left|\left( h  -\kappa\right) N(p,t) \right| dt \leq 2 \overline{\kappa}_0 (\tau-\frak t).
        \label{eq:argumentconvergenciacorbaC0}
    \end{align}
   From here $\gamma(\cdot,t)$ converges uniformly to some continuous limit function $\gamma(\cdot,\mathcal{T})$ as $t\to \mathcal{T}$, and
         $\Gamma_t$ stays in a compact region of $\cM$;  thus $\max_{0\leq \ell \leq m} \left| \nabla^m \KGauss \right| \leq {C}_m$  for all $m$.

Now,  the evolution equation from Lemma \ref{evol_eq} (a) and the upper bound for the curvature yield
\begin{equation}
    \partial_t v = (h - \kappa)\kappa \, v \geq  -2 \overline{\kappa}_0^2 v.
\end{equation}
By Grönwall’s Lemma, for all $t \in [0, \mathcal{T})$ we obtain $v(t) \geq v(0) \, e^{-2\overline{\kappa}_0^2 t} > 0$,
where the latter uses that $\gamma_0$ is an immersion. Hence, $v(\mathcal{T}) > 0$, and    $\gamma(\cdot,\mathcal{T})$ actually represents an immersed curve $\Gamma_{\mathcal{T}}=\gamma(\mathbb{S}^1,\mathcal{T})$. It remains to show that such a limiting curve is $C^\infty$.
 
   To achieve this, we will use an argument similar to the one in \eqref{eq:argumentconvergenciacorbaC0} but with higher order derivatives. Observe that we can apply Proposition \ref{Prop:bounds_higher_deriv_kappa_APCSF_LPCF} and Corollary  \ref{prop:bounds_of_deriv_KGauss} to deduce
    \begin{equation}
         \sup_{0\leq \ell \leq m} \big| \partial_s^\ell \kappa   \big|(p,t) \leq \overline \kappa_m \quad \text{and} \quad   \sup_{0\leq \ell \leq m+1}|\partial_s^{\ell}\KGauss|(p,t)  \leq \hat{C}_{m}.
    \end{equation}
    for all $(p,t)\in \mathbb{S}^1\times [0,\mathcal{T})$ and $m\geq 1$.
    
Now routine computations lead to complicated formulas relating higher order derivatives of $\gamma$ with those of $\kappa$ and $\KGauss$, which can be summarized as
     \begin{align}
        \nabla_t \nabla_s^{m} \gamma  = -\partial_s^{m}\kappa \ N  & + \widetilde{P}_1 (h, \kappa, \KGauss, \partial_s \kappa, \dots, \partial_s^{m-2}\kappa, \partial_s \KGauss, \dots, \partial_s^{m-2}\KGauss) N\\
        &      + \widetilde{P}_2 (h, \kappa, \KGauss, \partial_s \kappa, \dots, \partial_s^{m-1}\kappa, \partial_s \KGauss, \dots, \partial_s^{m-3}\KGauss) T,
    \end{align}
   where $\widetilde{P}_{i}$ are polynomials that depend on the terms in the brackets, with coefficients uniformly bounded. These expressions allow us to obtain an upper bound for $\nabla_t \partial_s^m \gamma$, independent of $t$,  for all $m\geq 1$. From here the rest of the argument follows exactly as in \cite{Huisken84}.
\end{proof}

\subsection{Uniform convergence of the curvatures} 
In this section, we bring together the tools developed so far to prove Theorem \ref{main} (3). More generally,
\begin{thm}\label{teo:limit_kappa_es_h} 
	Let $(\cM, g)$ be a surface and  $\gamma: \mathbb{S}^1 \times [0, \mathcal{T}) \to \cM$ be a smooth family of embedded closed curves evolving under the flow \eqref{eq:ccf_kappa} with global term $h(t)$ defined in \eqref{eq:def_h_LPCF_f} for $\alpha =0,1$. Assume that
	\begin{equation}
	\max_{\Gamma_t}\kappa \leq \overline{\kappa}_0 \quad \text{and} \quad  \sup_{\Gamma_t}|\nabla^{\ell}\KGauss|<{C}_2 \quad \text{for} \quad 0\leq \ell \leq 2.
	\label{eq:condicions_tma_limit_kappa_es_h}
	\end{equation}
	In addition, in the case of the length-preserving flow, suppose further that
	\begin{equation}
	0< a^2 \leq \KGauss \qquad \text{or } \qquad \KGauss \leq  -a^2 <0 \quad \text{with } \mathcal M \text{ simply connected}.
	\label{eq:condicions_tma_limit_kappa_es_h_extra_LP}
	\end{equation} 
	Then, $\mathcal{T}=\infty$ and
	\begin{equation} 
	\lim_{t\to \infty} \sup_{\mathbb{S}^1} \left| \kappa(\cdot,t) - h(t) \right| = 0.
	\label{eq:limit_kappa_es_h}
	\end{equation}
\end{thm}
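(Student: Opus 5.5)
The plan is to get $\mathcal T=\infty$ and higher-order bounds from the earlier results, then extract an integrable-in-time Lyapunov quantity controlling $\int(\kappa-h)^2\,ds$, and conclude by a Lipschitz/interpolation argument.

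\emph{Step 1 (existence and a priori bounds).} By \eqref{eq:condicions_tma_limit_kappa_es_h}, Theorem \ref{teo:k_unbdd_or_T_infty} rules out $\mathcal T<\infty$. Here I implicitly need $|\kappa|$ bounded; I would read the hypothesis as a two-sided bound, or note that a lower bound follows in the convex setting. Then Proposition \ref{Prop:bounds_higher_deriv_kappa_APCSF_LPCF} with $n=2$, together with Corollary \ref{prop:bounds_of_deriv_KGauss}, gives uniform bounds on $|\partial_s\kappa|$, $|\partial_s^2\kappa|$ and $|\partial_s^\ell\KGauss|$ for $\ell\le 2$. This is exactly why $\nabla^\ell\KGauss$ is assumed bounded up to order $2$. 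I also need two-sided bounds on $L(t)$, and on $A(t)$ where relevant. For {\sc ap-csf}, $L$ is non-increasing by Proposition \ref{prop:APCSF_L_decreases_LPCF_A_increases}, and it is bounded below by the isoperimetric inequality since $A$ is fixed. For {\sc lp-cf}, $L$ is fixed, and $A$ is bounded above: on a sphere-like surface by total area, and on a simply connected surface with $\KGauss\le -a^2$ by $L^2\ge 4\pi A+a^2A^2$. This is precisely where \eqref{eq:condicions_tma_limit_kappa_es_h_extra_LP} enters.

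\emph{Step 2 (space-time integrability).} For $\alpha=0$, Lemma \ref{evol_eq}(f) gives
\[
L'=\int(h-\kappa)\kappa\,ds=-\int(\kappa-h)^2\,ds .
\]
Hence $\int_0^\infty\!\!\int(\kappa-h)^2\,ds\,dt\le L(0)<\infty$. For $\alpha=1$, write $\bar\kappa=\frac1L\int\kappa\,ds$. Then
\[
A'=hL-\int\kappa\,ds=\frac{L\int\kappa^2\,ds-(\int\kappa\,ds)^2}{\int\kappa\,ds}=\frac{L\int(\kappa-\bar\kappa)^2\,ds}{\int\kappa\,ds}.
\]
By Gauss--Bonnet, $\int\kappa\,ds=2\pi-\int_{\Omega_t}\KGauss$. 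This is at most $2\pi$ if $\KGauss\ge a^2$, and at most $2\pi+C_0A$ if $\KGauss\le -a^2$. In either case it is bounded above, so $\int_0^\infty\!\!\int(\kappa-\bar\kappa)^2\,ds\,dt<\infty$ because $A$ is bounded. Moreover
\[
h-\bar\kappa=\frac{\int(\kappa-\bar\kappa)^2\,ds}{\int\kappa\,ds},
\]
and $|h-\bar\kappa|\le 2\overline\kappa_0$. Thus $\int(\kappa-h)^2\,ds\le 2\int(\kappa-\bar\kappa)^2\,ds+2L(h-\bar\kappa)^2$ is also integrable in time, provided $\int\kappa\,ds$ stays bounded below. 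I would expect to obtain that lower bound from convexity or from the same Gauss--Bonnet identity in the negative case.

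\emph{Step 3 (from integrable to vanishing).} Let $\phi(t)=\int(\kappa-h)^2\,ds$. Using Lemma \ref{evol_eq}(c),(e), $\partial_t\kappa$ is a polynomial in $h,\kappa,\KGauss,\partial_s^2\kappa$, and hence bounded by Step 1. Differentiating $h$ in time uses the same bounds together with the lower bound on $\int\kappa^\alpha\,ds$ from Step 1. So $|\phi'|\le C$, and a nonnegative, integrable, uniformly Lipschitz function tends to $0$. Finally, I pass from $L^2$ to sup using $|\partial_s\kappa|\le\overline\kappa_1$. If $|\kappa-h|\ge\delta$ at some point, then $|\kappa-h|\ge\delta/2$ on an arc of length $\min\{\delta/(2\overline\kappa_1),L\}$, so $\phi\ge c\min\{\delta^3,\delta^2 L\}$. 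Since $L$ is bounded below, this forces $\sup|\kappa-h|\to0$, which is \eqref{eq:limit_kappa_es_h}.

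The main obstacle is the length-preserving case in Step 2. There the monotone quantity controls $\int(\kappa-\bar\kappa)^2$ rather than $\int(\kappa-h)^2$, and the weight $\int\kappa\,ds$ must be bounded on both sides. Doing this via Gauss--Bonnet plus the isoperimetric bounds is exactly what the sign assumptions \eqref{eq:condicions_tma_limit_kappa_es_h_extra_LP} should provide.
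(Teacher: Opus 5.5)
Your overall route is the paper's: $\mathcal T=\infty$ from the blow-up criterion of Theorem~\ref{teo:k_unbdd_or_T_infty}, $L'=-E$ for the {\sc ap-csf}, and monotonicity of $A$ plus a uniform area bound for the {\sc lp-cf}. Then a one-sided bound on $E'$ passes from $\int_0^\infty E\,dt<\infty$ to $E\to0$, and an $L^2\to L^\infty$ upgrade finishes.

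The obstacle you single out in Step~2, however, is self-inflicted. Since $L'=0$ and $\int\kappa(\kappa-h)\,ds=0$ for $\alpha=1$,
\[
E=\int_{\mathbb S^1}\kappa(\kappa-h)\,ds-h\int_{\mathbb S^1}(\kappa-h)\,ds=hA'.
\]
This is exactly the identity the paper uses, so $\int_0^\infty E\,dt\le\overline\kappa_0\sup_tA(t)$. In your notation it is simply $A'=hL-\int\kappa\,ds=L(h-\bar\kappa)$. Hence $L(h-\bar\kappa)^2\le2\overline\kappa_0A'$ and $\int(\kappa-\bar\kappa)^2\,ds=(h-\bar\kappa)\int\kappa\,ds\le\overline\kappa_0A'$, with no lower bound on $\int\kappa\,ds$ needed.

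Your closing arc argument uses only $|\partial_s\kappa|\le\overline\kappa_1$. It is a valid and more elementary substitute for the paper's Poincar\'e-plus-interpolation step. It also avoids the paper's assertion that $\kappa-h$ has zero mean, which holds for $\alpha=0$ but not for $\alpha=1$. For $\alpha=0$ you can even drop the lower bound on $L$: since $\kappa-h$ vanishes somewhere, $|\kappa-h|(x_0)\ge\delta$ already forces $L\ge2\delta/\overline\kappa_1$.

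Where a lower bound on $\int\kappa\,ds$ is genuinely needed is your Step~3 for $\alpha=1$. There $E'$ contains the term $-2h'\int(\kappa-h)\,ds=2h'A'$, which does not vanish. Bounding $h'=\big(\int\kappa^2\,ds/\int\kappa\,ds\big)'$ by the direct computation uses $\int\kappa\,ds\ge c>0$. For $\KGauss\le-a^2$, Gauss--Bonnet gives $\int\kappa\,ds=2\pi-\int_{\Omega_t}\KGauss\,dA\ge2\pi$, so in the Hadamard setting your argument closes. For $\KGauss\ge a^2$, Gauss--Bonnet only gives $\int\kappa\,ds\le2\pi-a^2A$, and convexity only gives positivity. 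So no uniform lower bound is available, and that case is left open as written. The paper does not settle this either: its displayed estimate for $E'$ omits the $h'$-term, which vanishes only for $\alpha=0$.

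A clean way around it, assuming $\kappa\ge0$ (which is in any case what gives $|h|\le\overline\kappa_0$ for $\alpha=1$), is to run Steps~2--3 with $\int(\kappa-\bar\kappa)^2\,ds$ instead of $E$. It is integrable in time by the bound above. Its time derivative contains no derivative of the mean, because $\int(\kappa-\bar\kappa)\,ds=0$. Once $\sup|\kappa-\bar\kappa|\to0$, use that $h$ is a $\kappa\,ds$-weighted average of $\kappa$, so $\min\kappa\le h\le\max\kappa$. This gives $\sup|\kappa-h|\le2\sup|\kappa-\bar\kappa|\to0$.
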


\begin{bem}
Notice that, for the {\sc ap-csf} in the flat case, \eqref{eq:limit_kappa_es_h}  implies that the curvature $\kappa$ of the evolving curves $\gamma_t$ converges uniformly to the constant $h_\infty:=\lim_{t\to \infty}h(t)$. Indeed, in this case $h(t)$ is a bounded monotone function, whereas for the non-flat curvature case $h'$ does not  necessarily have a constant sign, and we can only guarantee subconvergence because it is easy to show that $h(t)$ is uniformly bounded. The condition on the derivatives of the Gauss curvature would be automatically satisfied if the evolving curves stay within a compact region of $\mathcal M$, as happens in Theorem \ref{main} (4), item (a).
\end{bem}

\begin{proof}
By hypothesis, we have $    \max_{\Gamma_t} \kappa \leq \overline{\kappa}_0$. Therefore, we can apply Theorem \ref{teo:k_unbdd_or_T_infty}, which ensures that the solution exists for all time, i.e., $ \mathcal{T} = \infty $. Now consider the $ L^2 $-energy given by
\begin{equation}
E(t) := \int_{\mathbb{S}^1} (\kappa - h)^2 \, ds.
\end{equation}
We aim to prove that $\int_0^\infty E(t) \, dt$
is finite for both flows. Indeed, for the {\sc ap-csf},  \eqref{eq:lengthevol} leads to 
\begin{equation}
     L'(t) = \int_{\mathbb{S}^1} \kappa (h-\kappa) \, ds = - \int_{\mathbb{S}^1} (\kappa - h)^2 \, ds = - E(t).
\end{equation}
Integrating in time yields
\begin{equation}
    \int_0^\infty E(t) \, dt  = L_0 - \lim_{t\to \infty} L(t) \leq L_0.
    \label{eq:bound_integral_energy_AP}
\end{equation}

On the other hand, the argument for the {\sc lp-cf} requires a different approach, since the curve length is constant. Using the evolution equations  \eqref{eq:lengthevol}, we obtain
\[E(t)=  \int_{\mathbb{S}^1} \kappa(\kappa-h)  \, ds  -h \int_{\mathbb{S}^1}  (\kappa-h)  \, ds  \\
         =  - L' + h  A' =  h  A'.\]
     Using that  $A$ is non-decreasing (cf. Proposition~\ref{prop:APCSF_L_decreases_LPCF_A_increases}), we find  $\int_0^\infty E(t) \, dt \leq \overline \kappa_0 \lim_{t \to \infty} A(t)$. To find a uniform area bound, we consider the two cases in assumption~\eqref{eq:condicions_tma_limit_kappa_es_h_extra_LP}. If $0< a^2 \leq \KGauss$, then by Gauss–Bonnet theorem, we get
        \begin{equation}
            a^2 A(t) \leq \int_{\Omega_t} \KGauss \ dA = 2\pi - \int_{\mathbb{S}^1} \kappa \ ds \leq 2\pi + \overline \kappa_0 L_0\, .
        \end{equation}
In turn, if $\KGauss \leq -a^2 <0 $, an isoperimetric type inequality by Osserman \cite[Theorem 8]{ossermanIospIneq} gives
        \begin{equation}
            A(t) \leq \frac{L(t)}{a^2} = \frac{L_0}{a^2} \, .
        \end{equation} 
Hence $A(t)$ is bounded in either case.

Next, we compute the time derivative of $E(t)$ under \eqref{eq:ccf_kappa}. Using the evolution equations from Lemma \ref{evol_eq} (c) and (e), we reach
\begin{align}
\frac{d}{dt} E(t) 
& \leq  2 \int_{\mathbb{S}^1} (\kappa - h)^2 (\KGauss + \kappa^2) \, ds  - \int_{\mathbb{S}^1} (\kappa - h)^3 \kappa \, ds,
\end{align} 
where the inequality comes from $2 \int_{\mathbb{S}^1} (\kappa - h) \partial_s^2 \kappa  \leq -2 \int_{\mathbb{S}^1} (\partial_s \kappa)^2$. Notice that each term is uniformly bounded by hypothesis. Therefore,  it follows that
\begin{equation}
    \lim_{t \to \infty} E(t) = \lim_{t \to \infty} \int_{\mathbb{S}^1} (\kappa - h)^2 \, ds = 0.
    \label{eq:lim_Energia_0}
\end{equation}

Since $(\kappa - h)$ has zero mean on the compact manifold $\mathbb{S}^1$ and is smooth, it satisfies the hypotheses of \cite[Theorem 3.67]{Aubin1998}, so we have
\begin{equation}
    \sup_{\mathbb{S}^1} |\kappa - h| \leq C \| \nabla (\kappa - h) \|_2.
\end{equation}
Recalling that $\partial_s \kappa$ is uniformly bounded by Proposition  \ref{Prop:bounds_higher_deriv_kappa_APCSF_LPCF}, we can apply the interpolation inequality from \cite[Theorem 3.69]{Aubin1998}, for  $p = q = r = 2$, to attain
\begin{equation}
    \left( \sup_{\mathbb{S}^1} |\kappa - h| \right)^2 \leq C \| \partial_s (\kappa - h) \|_2^2 \leq C \| \kappa - h \|_2 \| \partial_s^2 \kappa \|_2 = C E(t)^{1/2} \| \partial_s^2 \kappa \|_2.
\end{equation}
Finally, as $\| \partial_s^2 \kappa \|_2$ is uniformly bounded by Proposition \ref{Prop:bounds_higher_deriv_kappa_APCSF_LPCF},   \eqref{eq:lim_Energia_0} ensures \eqref{eq:limit_kappa_es_h}.
\end{proof}

\section{Geometric properties of pinched Hadamard surfaces} \label{Hadamard}
Throughout this part we assume that $(\cM,g)$ is a pinched Hadamard surface  as in \eqref{pinched}, and fix $p \in \cM$. Consider the radial distance function $r = r_p :=\text{dist}_{g}(p,\, \cdot \,).$  We shall use the notation $\pr :=  \nabla r$ for the gradient of $r$, while $\pr^{\top}$  is the corresponding tangential component.

\subsection{Hessian comparison enhanced}

In this setting the Hessian Comparison (see e.g.~\cite[Theorem 6.4.3]{PetersenRiemGeom}) says that, for any tangent vector $X$, it holds
\begin{equation}
    a \coth(a r )  \left( | X |^2 - \avg{ X, \pr }^2 \right)
\leq
\operatorname{Hess}(r)(X, X) 
\leq
b \coth(b r )  \left( | X |^2 - \avg{ X, \pr }^2 \right).
\label{eq:hessian_r_comparison_general}
\end{equation}

Now, for each $R > 0$, the \emph{geodesic circle} of radius $R$ centered at $p$, that is,
\begin{equation}
S_R := \{ x \in M \mid r(x) = R \}
\label{eq:geodesic_circle_centered_p}
\end{equation}
 is a regular embedded curve in $\mathcal M$ with unit tangent vector field $U$ satisfying $\langle U, \pr \rangle = 0$. Hence its 
curvature, denoted by $\kcirc$, is  given by
\begin{equation}
\kcirc  =  \avg{\nabla_{U} \pr, U} = \operatorname{Hess}(r)(U, U).
\label{eq:def_kcirc}
\end{equation}
From here, \eqref{eq:hessian_r_comparison_general} applied to $U$ leads to
\begin{equation}
    a \coth(a r) \leq \kcirc \leq b \coth(b r).
    \label{eq:fita_curvatura_cercle_geodesic}
\end{equation}

Now we derive a more specific comparison result:
\begin{lemma}
	\label{lema:hessian_inequalities_r}
 Let $\gamma : \mathbb{S}^1 \to \cM$ be a smooth, closed and embedded  curve, such that $p\notin \gamma(\mathbb{S}^1)$. Then, at every point of the curve, the following inequalities hold:
	
	\begin{itemize}
		\item[(a)] Tangent inequality: $0 < a \coth(a r) \big(1 - |\pr^{\top}|^2 \big)
			\leq \langle \nabla_T \pr, T \rangle
			\leq b \coth(b r) \big(1 - |\pr^{\top}|^2 \big).$ \smallskip
		
		\item[(b)] Normal inequality: $0 < a \coth(a r) \, |\pr^{\top}|^2
			\leq \langle \nabla_N \pr, N \rangle
			\leq b \coth(b r) \, |\pr^{\top}|^2.$ \smallskip
		
		\item[(c)] $\langle \nabla_N \pr, T \rangle= \langle \nabla_T \pr, N \rangle$, and one gets a mixed term inequality:
	\end{itemize}
	\begin{align}
	\mathscr F_{a,b} (r) \leq \langle \nabla_N \pr, T \rangle  \leq \mathscr F_{b,a} (r),
		\label{eq:desigualtat_hessiaR_tangentinormal}
	\end{align}
where \[2 \, \mathscr F_{a,b} (r):= 	 a \coth(a r) (1 - 2 \langle T, \pr \rangle \langle N, \pr \rangle) -  b \coth(b r).\]
\end{lemma}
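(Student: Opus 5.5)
The plan is to obtain all three items by evaluating the Hessian comparison \eqref{eq:hessian_r_comparison_general} on suitable vectors built from the orthonormal frame $\{T,N\}$ along $\gamma$. First, note that on a Hadamard surface $\exp_p$ is a diffeomorphism by Cartan--Hadamard, so $r$ is smooth on $\cM\setminus\{p\}$ and $|\pr|=1$ there. Since $p\notin\gamma(\mathbb S^1)$, all quantities in the statement are well defined along the curve. We also have $\operatorname{Hess}(r)(X,Y)=\langle \nabla_X\pr,Y\rangle$.

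Write $t:=\langle T,\pr\rangle$ and $n:=\langle N,\pr\rangle$. Because $\{T,N\}$ is orthonormal and $\pr$ is a unit vector, we get $t^2+n^2=1$ and $|\pr^{\top}|^2=t^2$. For (a), take $X=T$ in \eqref{eq:hessian_r_comparison_general}; then $|X|^2-\langle X,\pr\rangle^2=1-t^2=1-|\pr^{\top}|^2$. For (b), take $X=N$; then $1-n^2=t^2=|\pr^{\top}|^2$. Nonnegativity of the lower bounds follows from $a\coth(ar)>0$ for $r>0$. Strict positivity holds wherever $T$ (respectively $N$) is not parallel to $\pr$, which is the generic situation the statement refers to.

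For (c), the identity $\langle\nabla_N\pr,T\rangle=\langle\nabla_T\pr,N\rangle$ is just the symmetry of $\operatorname{Hess}(r)$, since $\pr$ is a gradient field. For the mixed bound I will polarize with $X=T+N$:
\[
2\operatorname{Hess}(r)(T,N)=\operatorname{Hess}(r)(T+N,T+N)-\operatorname{Hess}(r)(T,T)-\operatorname{Hess}(r)(N,N).
\]
Here $|T+N|^2-\langle T+N,\pr\rangle^2=2-(t+n)^2=1-2tn$. Applying \eqref{eq:hessian_r_comparison_general} to the first term gives it between $a\coth(ar)(1-2tn)$ and $b\coth(br)(1-2tn)$. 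Adding the two bounds from (a) and (b), and using $(1-t^2)+t^2=1$, gives
\[
a\coth(ar)\le \operatorname{Hess}(r)(T,T)+\operatorname{Hess}(r)(N,N)\le b\coth(br).
\]
Subtracting these estimates in the appropriate order yields
\[
a\coth(ar)(1-2tn)-b\coth(br)\le 2\langle\nabla_N\pr,T\rangle\le b\coth(br)(1-2tn)-a\coth(ar).
\]
These are exactly $2\mathscr F_{a,b}(r)$ and $2\mathscr F_{b,a}(r)$.

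The only delicate point I expect is bookkeeping rather than analysis. The sum $\operatorname{Hess}(T,T)+\operatorname{Hess}(N,N)$ must be bounded as a whole, which is what makes the weights $1-t^2$ and $t^2$ combine into $1$. Using the Laplacian comparison directly would be an equivalent route. Bounding the two terms separately would lose the clean form of $\mathscr F$.
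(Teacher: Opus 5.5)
Your proof is correct and is essentially the paper's: (a) and (b) come from inserting $T$ and $N$ into the Hessian comparison \eqref{eq:hessian_r_comparison_general}, and (c) from applying it to $X=T+N$ and subtracting the combined bound on $\langle\nabla_T\pr,T\rangle+\langle\nabla_N\pr,N\rangle$. The only cosmetic difference is that you get the symmetry $\langle\nabla_N\pr,T\rangle=\langle\nabla_T\pr,N\rangle$ from the symmetry of $\operatorname{Hess}(r)$, whereas the paper computes both sides explicitly as $-\kcirc\langle T,\pr\rangle\langle N,\pr\rangle$. Your caveat on strictness is also justified, and the statement overclaims here. The strict lower bound in (a) fails wherever $\langle N,\pr\rangle=0$, and the one in (b) fails wherever $\langle T,\pr\rangle=\partial_s r=0$; the latter always happens at the extrema of $r$ along a closed curve, so in general only the non-strict inequalities hold.
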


    \begin{proof}
The tangent and normal inequalities  follow directly from \eqref{eq:hessian_r_comparison_general}. To establish (c), using the definition of  $\kcirc$ in \eqref{eq:def_kcirc} and that $\nabla_{\pr} \pr = 0$, we get
\begin{align*}
\langle \nabla_N \pr, T \rangle &= \langle \nabla_{N-\langle N, \pr \rangle \pr} \pr, T \rangle = \langle\kcirc (N-\langle N, \pr \rangle \pr), T \rangle = -\kcirc \langle N, \pr \rangle \langle T, \pr \rangle.
\end{align*}
By the same argument, we also obtain
\begin{align}
\langle \nabla_T \pr, N \rangle &= - \kcirc  \langle T, \pr \rangle \langle \pr, N \rangle = \langle \nabla_N \pr, T \rangle.
\label{eq:avg_T_N_pr}
\end{align}

Now, applying \eqref{eq:hessian_r_comparison_general} to the vector field $X = T + N$, which is tangent to $\cM$, leads to
\begin{equation}
a \coth(a r) \cdot \left(1 - 2 \langle T, \partial_r \rangle \langle N, \partial_r \rangle \right) 
\leq \langle \nabla_{T+N} \partial_r, T + N \rangle 
\leq b \coth(b r) \cdot \left(1 - 2 \langle T, \partial_r \rangle \langle N, \partial_r \rangle \right),
\label{eq:desigualtat_previa_hessiar_mixed}    
\end{equation}
where we have used that $\langle X, \partial_r \rangle^2  = 1 + 2 \langle T, \partial_r \rangle \langle N, \partial_r \rangle$.

On the other hand, thanks to \eqref{eq:avg_T_N_pr}, we reach the expansion
\begin{align*}
\langle \nabla_{T+N} \partial_r, T+N \rangle &= 
 \langle \nabla_T \partial_r, T \rangle 
+ \langle \nabla_N \partial_r, N \rangle 
+ 2 \langle \nabla_T \partial_r, N \rangle.
\end{align*}
Plugging this into \eqref{eq:desigualtat_previa_hessiar_mixed}, the desired bounds follow by adding  the estimates in (a) and (b). 
\end{proof}

\subsection{Results that only require a negative lower bound}

 Let $\Omega$ be a bounded domain with boundary $\Gamma=\partial \Omega$ within a Hadamard surface $\mathcal M$.
Recall that the inner radius $\rho_-$ and outer radius $\rho_+$ of  $\Omega$ are defined by
\begin{equation}
    \rho_-  =  \sup \{ \rho \, \mid \, B(p,\rho) \subset \Omega \ \text{for some } 
 \ p\in \cM \} ,
    \label{eq:innerradius}
\end{equation}
and
\begin{equation}
    \rho_+  =  \inf\{\rho \, \mid \, \Omega \subset  B(p,\rho) \ \text{for some } \ p \in \cM \} ,
    \label{eq:outerradius} 
\end{equation}
where $B(p,\rho)$ denotes the metric geodesic ball centered at $p$ with radius $\rho$.

\begin{lemma} \label{Had_low}
	Let $(\cM, g)$ be a Hadamard surface with Gauss curvature $\KGauss \leq -a^2$, where $a > 0$.
	\begin{enumerate}
	\item[{\rm (a)}] $L \geq a A$.
    \item[{\rm (b)}] For the isoperimetric deficit it holds $\sqrt{\Delta} \geq \Big|L-A a \coth\left(\frac{a \rho_{-}}{2}\right)\Big|.$
    \item[{\rm (c)}] Any  embedded, convex, and closed curve $\gamma\colon \mathbb{S}^1 \to \cM$ has curvature strictly positive on a non-empty open subset of $\mathbb{S}^1$.
  \end{enumerate}
    \begin{proof}
    	(a) comes from \cite[Theorem 8 (a)]{ossermanIospIneq}. To get (b), we apply another inequality due to Osserman (see \cite[Theorem 6]{ossermanIospIneq}):
        \begin{equation}
            \Delta \geq \bigg( L - \frac{L_{\rho_-}^{-a^2}}{A_{\rho_-}^{-a^2}} A \bigg)^2,
        \end{equation}
        where $A_{\rho_-}^{-a^2}$ and  $L_{\rho_-}^{-a^2}$ denote the area and the length, respectively, of a geodesic disk of radius $\rho_-$ in the hyperbolic plane of constant curvature $-a^2$ . More explicitly, we have
$$L_{\rho_-}^{-a^2}= 2\pi \frac{\sinh(a \rho_{-})}{a} 
\qquad
\text{and}
\qquad
A_{\rho_-}^{-a^2}= \frac{4\pi}{a}  \sinh^2\left(a\frac{\rho_{-}}{2}\right) = 2\pi \frac{\cosh(a\rho_{-})-1}{a^2}.$$
From here (b) follows by direct substitution. Now (c) is a  by-product of Gauss–Bonnet, since
\begin{equation}
\int_{\mathbb{S}^1} \kappa \, ds = 2\pi + \int_{\Omega} (-\KGauss) \, dA \geq 2\pi + a^2 A(\Omega) > 0,
\end{equation}
As by hypothesis $\kappa \geq 0$, the integral being strictly positive implies that $\kappa > 0$ on a subset of positive measure. In particular, there exists a non-empty open subset of $\mathbb{S}^1$ where $\kappa >0$.
\end{proof}
\end{lemma}

\section{Preservation of convexity} \label{convex}

In this section, we study the preservation of convexity for curves solving the constrained flow \eqref{eq:ccf_f}, under the assumption that $f$ is non-decreasing, which is crucial for short time existence.

\begin{lemma}[Preservation of convexity]\label{lema:preservationofstrictconvexity}
     Let $(\cM, g)$ be a pinched Hadamard surface.
    Suppose the initial curve $\gamma_0$ is smooth, closed, and (strictly) convex. Then, the solution $\gamma(\cdot, t)$ of the flow \eqref{eq:ccf_f}, with $f$ non-decreasing and any $h(t)$ as in \eqref{eq:def_h_LPCF_ff}, remains (strictly) convex for all $t > 0$ as long as the solution exists. More precisely,
       \begin{equation}
        \kappa(p,t)\geq \min \{ \min_{\mathbb{S}^1} \kappa(\cdot, 0), a \} := \varepsilon_0 \geq 0 \qquad \text{ for all $p \in \mathbb{S}^1$ and all $t \in [0, \mathcal{T})$.}
    \end{equation}

\begin{proof}
    As $\gamma_0$ is strictly convex, by continuity $\kappa$ remains positive on $\Gamma_t$ for short time. For each $t_0>0$, let $p_0\in \mathbb{S}^1$ be the point with
    \begin{equation}
        \kappa(p_0,t_0)= \min_{\mathbb{S}^1} \kappa(\cdot, t_0) =: \kappa_{\min}(t_0) .
    \end{equation}
    Then $\partial_s \kappa =0$ and $\partial_s^2 \kappa \geq 0$ at $(p_0,t_0)$. Since $f$ is non-decreasing and $\kappa$ attains its minimum at $(p_0, t_0)$, it follows that $f(\kappa)$ also does, therefore 
    \begin{equation}
        \partial ^2_s \left(f(\kappa)\right)\big|_{(p_0,t_0)}\bigg. \geq 0 \qquad \text{and} \qquad
        h(t) = \frac{\int_{\mathbb S^1} \kappa^\alpha f(\kappa) \, ds}{\int_{\mathbb S^1} \kappa^\alpha \, ds}\geq  f(\kappa_{\min}(t_0)).
    \end{equation}

 If $\kappa^2_{\min}(t_0)  \geq a^2>0$, we have nothing to prove, as the statement follows from $\varepsilon_0=a$. Otherwise,  if $\kappa^2_{\min}(t_0) < a^2 $, by the evolution equation  from Lemma \ref{evol_eq} (e), we get 
\begin{equation}
    \kappa_{\min}'(t_0) \geq (h-f(\kappa_{\min}(t_0)))(-\KGauss-\kappa^2) \geq (h-f\kappa_{\min}(t_0))(a^2-\kappa_{\min}^2(t_0)) >0.
\end{equation}
     From here, the maximum principle  leads to the claimed inequality.
\end{proof}
\end{lemma}

We prove that the curvature evolution under the area- or length-preserving flow \eqref{eq:ccf_kappa} not only preserves convexity but, in fact, instantaneously strengthens it.

\begin{thm}[Instantaneous strict convexity when $f=\kappa$]\label{teo:becomes_strict_convex}
 Let $\left(\cM, g \right)$ be a pinched Hadamard surface and  $ \gamma_0 \colon \mathbb{S}^1 \to \cM $ be a smooth,  convex and closed curve. Then the solution $\gamma: \mathbb{S}^1 \times [0, \mathcal{T}) \to \cM$ be  of the flow \eqref{eq:ccf_kappa}, with $h$ as in  \eqref{eq:def_h_LPCF_f} for any $\alpha$, is strictly convex for all $t \in (0, \mathcal{T})$.
\begin{proof}
Lemma \ref{lema:preservationofstrictconvexity} ensures that the  curve remains convex as long as the flow exists.  By periodic extension, we can regard $\kappa: \mathbb{S}^1\times [0,\mathcal{T}) \to \cM$ as a function on $I\times [0,\mathcal{T})$, where $I$ is any open bounded interval so that $[0,2\pi] \subsetneq  I$. Set $D:= I\times (0,\tau)$ for some $\tau\in(0,\mathcal{T})$ so that
\begin{equation}
    \sup_{D}\kappa(p,t) \leq \max_{\overline{I} \times [0,\mathcal{T}]} \kappa(p,t) \; \text{ is uniformly bounded;}
\end{equation}
such $\tau$ exists due to the smoothness and convexity of the initial curve $\gamma_0$, which ensures that the curvature $\kappa$ remains uniformly bounded for a short time by standard regularity results.

Suppose there is an instant $t_0\in (0,\tau)$ where $\kappa$ vanishes, that is, $\kappa(p_0,t_0)=0$ for some $p_0\in \overline{I}$. If no such $t_0$ exists, then the curvature remains strictly positive on $(0, \tau)$. We are interested in the case $p_0\in I$, otherwise we are done.
Using Lemma \ref{evol_eq} (e) with $f=\kappa$, we get
\begin{align}
        \partial_t \kappa -  \partial^2_s \kappa &   \geq \kappa \KGauss -h \kappa^2  \geq \ \left(-b^2      -   (\kappa_{\max})^2 \right)  \kappa =: \kappa \cdot c,
    \end{align}
     with $c$ bounded, and $\kappa \geq 0$. Hence we can apply the strong maximum principle (see \cite[Chapter 2.1,  2.2]{friedman1964}) to conclude that $\kappa \equiv 0$ in $\overline{I}\times\{t_0\}$. This contradicts Lemma \ref{Had_low} (c), thus $\kappa\big|_{(0,\tau)} > 0$.

 Now, for any $\varepsilon > 0$ sufficiently small, we can consider the curve $\gamma_{ \tau - \varepsilon }$ as new initial data. By the short-time existence and uniqueness for  \eqref{eq:ccf_kappa}, there exists a unique smooth solution evolving from this initial curve. Since $\gamma_{\tau - \varepsilon }$ is strictly convex, Lemma~\ref{lema:preservationofstrictconvexity} guarantees that the flow preserves strict convexity for all times $ t \geq \tau - \varepsilon $. By  uniqueness, this solution must coincide with the original $\gamma_t$ on the interval $[\tau - \varepsilon, \mathcal T)$. Therefore, $\gamma_t$ is strictly convex for all $ t \in (0, \mathcal{T}) $.  

\end{proof}
\end{thm}

\section{A priori control on the radial distance on pinched Hadamard surfaces} \label{radius}

Hereafter, we will assume that $\gamma_0$ is strictly convex, which adds no further restriction, as we have already shown that any convex curve becomes instantaneously strictly convex under  \eqref{eq:ccf_kappa}. From now on,  $(\cM,g)$ is a pinched Hadamard surface as in \eqref{pinched} and let $\gamma(\cdot,t)$ be a smooth convex solution of \eqref{eq:ccf_kappa}, with global term $h(t)$ as in \eqref{eq:def_h_LPCF_f} for $\alpha =0,1$, on $[0,\mathcal{T})$.

 As a first step, we deduce a lower bound on the inner radius $\rho_{-}(t)$ (defined as in \eqref{eq:innerradius}) of $\Omega_t$. 

\begin{lemma}        \label{lema:bounds_inner_radius}
    Let $\{\Gamma_t\}_{t\in[0,\mathcal{T})}$ be a smooth solution of  \eqref{eq:ccf_kappa}, with  $h(t)$ as in  \eqref{eq:def_h_LPCF_f},  starting from a smooth, closed and convex curve $\gamma_0$.  Then there is a constant $r_1$ depending  on $\gamma_0$ and $a$ so that
    \begin{equation}
        0< r_1 \leq \rho_{-}(t) \leq L_0/2  \qquad \text{for all } \quad  t\in[0,\mathcal{T}).
    \end{equation}

    \begin{proof}
From Lemma \ref{Had_low} (b) and the monotonicity in Lemma \ref{lemma:monotonicityisoperimetricdeficit}, we obtain
\begin{equation}
    \coth\left(\frac{a \rho_{-}(t)}{2}\right)
    \le \frac{L(t) +\sqrt{\Delta(t)}}{A(t)\, a}
    \le \frac{L_0+\sqrt{\Delta(0)}}{A_0\, a},
    \label{eq:desig_coth_inner_radius}
\end{equation}
where the last inequality holds for both flows by Proposition~\ref{prop:APCSF_L_decreases_LPCF_A_increases}. This  and Lemma \ref{Had_low} (a) imply
\begin{equation}
    0 < aA_0 \le aA(t) \le L(t) \le L_0,
    \label{eq:boundofL(t)}
\end{equation}
which ensures that the right-hand side of \eqref{eq:desig_coth_inner_radius} is strictly greater than 1. Then it makes sense to apply $\coth^{-1}$ to both sides and, since $\coth$ is a decreasing function, we attain 
    \begin{equation}
         \rho_{-}(t)  \geq   \frac{2}{a}  \coth^{-1}\left(  \frac{L_0 + \sqrt{\Delta(0)}}{A_0 \, a} \right) =: r_1 > 0 .
        \label{eq:fita_inferior_inner_radius}
    \end{equation}
    
 For the upper bound, 
    by definition,  $\rho_-(t)$ is the radius of the largest geodesic ball $B(p, \rho_-(t))$ contained entirely in  $\Omega_t$ and tangent to $\Gamma_t = \partial \Omega_t$.  Then $B(p, \rho_-(t))$ cannot contain two points that are farther apart than the diameter of $\Omega_t$; and since the latter is convex, we have
\begin{equation} \label{diamL}
 2 \rho_-(t) \leq  \operatorname{diam}(\Omega_t) = \operatorname{diam}(\Gamma_t) \leq L(\Gamma_t) \leq L_0.
\end{equation}
    \end{proof}
    \end{lemma}
    
At this stage we know that there exists a geodesic ball of radius $r_1$ contained in $\Omega_t$ for each $t\in[0,\mathcal{T})$, but the center of such ball may vary with time. Next, we aim to prove the existence of a geodesic ball with fixed center enclosed by the evolving domain on a controlled time interval.

   \begin{lemma}\label{lema:innerbal}
 For any $t_0\in[0,\mathcal{T})$, let $p_0$ be the center of the inner ball $B(p_0,\rho_0)$ of $\gamma_{t_0}$, where $\rho_0 = \rho_- (t_0)$. Then we can find  some $\tau$ depending only on $a, b$  and the initial curve $\gamma_0$ so that
        \begin{equation}
            B(p_0, \rho_0/2) \subset \Omega_t, \quad \text{for all } t\in[ t_0 ,  \mathfrak T) , \quad \text{with} \quad \mathfrak T:= \min\{\mathcal{T},t_0+\tau\}.
            \label{eq:geodesicballwithfixedcenter}
        \end{equation}

\begin{proof}
 Denote by $\mathrm{r}_{p_0} = r_{p_0} \circ \gamma_t$, where $r_{p_0}$ is the distance from $p_0$.     First, thanks to \eqref{Frenet}, we get 
    \begin{align}
        \ps^2 \mathrm{r}_{p_0} & = \partial_s \avg{{T, \pr}} = -\kappa  \avg{ N, \pr} + \avg{T, \nabla_s \pr} 
        \label{eq:laplacianr2}.
    \end{align}
On the other hand, we can write
     \begin{equation}
    \partial_t \mathrm{r}_{p_0} = (h-\kappa) \langle N, \pr \rangle = \ps^2 \mathrm{r}_{p_0} + h \avg{ \pr, N } -  \avg{T, \nabla_s \pr}.
    \label{eq:evol_of_r}
    \end{equation}

   We set $t_1:= \inf \left\{ t > t_0 \ | \ p_0\notin \Omega_t \right\}$, which implies that
   \begin{equation}
       p_0 \in \Gamma_{t_1}  = \partial \Omega_{t_1} \quad \text{and} \quad \avg{ N, \pr } \geq 0 \quad \text{on} \quad [t_0, \min\{t_1,\mathcal{T}\} ),
       \label{eq:condicionsdeprendret1}
   \end{equation}
    because a convex set is star-shaped with respect to any interior point.
    Hence using the upper bound from Lemma \ref{lema:hessian_inequalities_r} (a) and the fact that $h(t)>0$, we get 
    \begin{align}
        \heatoperatorDOSdim \mathrm{r}_{p_0}  \geq - b \coth(b \mathrm{r}_{p_0}) \qquad \text{on} \quad   \mathbb{S}^1\times [t_0, \min\{t_1,T \}).
        \label{eq:fitainferioroperadoredpr}
    \end{align}
    Next, we denote 
       $\mathrm{r}(t):=\min_{ \mathbb{S}^1}  \mathrm{r}_{p_0}(\cdot, t)$ for any fixed $t\in[t_0, \min\{t_1,\mathcal{T}\})$. By Hamilton's trick (cf. \cite[Lemma 2.1.3]{{mantegazza_LNMCF}}), one has
   \begin{align}
       \mathrm{r}'(t)  \geq  - b \coth(b \mathrm{r}(t)).
   \end{align}
    Thus it follows that
   \begin{equation}
       \mathrm{r}(t) \ \geq \ R(t) \ , \qquad \text{for} \quad t \in [t_0, \min\{t_1,\mathcal{T}\}), 
       \label{eq:r_min_major_que_R}
   \end{equation}
   being $R$ the solution to $R' \ = \ - b \coth(b R)$ with  initial condition $R(0) = \rho_0$, that is,
    \begin{equation}
        \cosh(b R)= e^{-b^2 (t-t_0)}\cosh(b \rho_0).
        \label{eq:sol_edo_R}
    \end{equation}

    From here, since $\cosh$ and $s \mapsto \log \left( \frac{\cosh(b s)}{\cosh(b s/2)}\right)$ are  non-decreasing functions, we attain
     \begin{equation}
        R(t)\geq \rho_0/2 \quad \text{if} \quad t-t_0 \leq \frac{1}{b^2}\ln \left( \frac{\cosh( b \, r_1)}{\cosh(b \, r_1/2)}\right) =:\tau,
        \label{eq:fita_R_tau_t1}
    \end{equation} 
    using also the lower bound $r_1$ for the inner radius from Lemma \ref{lema:bounds_inner_radius}. This and  \eqref{eq:r_min_major_que_R} yield
    \begin{equation} \label{bdd_in}
        \mathrm{r}(t) \geq \rho_0/2 \quad \text{if} \quad t\in [t_0, \min\{t_1,t_0+\tau\}).
    \end{equation}
   To complete the proof, we claim that $t_1 \geq \min\{\mathcal{T},t_0+\tau\}=\mathfrak{T} $. Indeed, assume on the contrary that  $t_1  <  \mathfrak{T} $, which allows to use \eqref{eq:r_min_major_que_R} to conclude that
   \begin{equation}
       r_{p_0}(q, t_1- \eta) \ \geq R(t_1-\eta) \quad \text{ for all } q\in \Gamma_{t_1-\eta} \ \text{and all } \ \eta >0.
   \end{equation}
    Hence $\text{dist}(\Gamma_{t_1-\eta}, p_0) \geq R(t_1 - \eta)$,
   and taking limits as $\eta \to 0$,  \eqref{eq:condicionsdeprendret1} and \eqref{eq:fita_R_tau_t1} imply
   \begin{equation}
       0= \text{dist}(\Gamma_{t_1 }, p_0)   \geq R(t_1 )  \geq \rho_0/2>0,
   \end{equation}
    which is a contradiction. Thus the inequality in \eqref{bdd_in} holds for $t\in [t_0,\mathfrak{T})$, meaning that $\Gamma_t$ encloses a ball of radius $\frac{\rho_0 }{  2}$ and fixed centre $p_0$ for all $t\in [t_0,\mathfrak{T})$.
    \end{proof}  
    \end{lemma}

    \begin{lemma}\label{lema:bounds_rp0_and_upper_outer_radius}
      For all $t\in[ t_0 ,   \frak T)$, with $\frak T$ as in Lemma \ref{lema:innerbal}, and all $x\in \Gamma_t$, we get
        \begin{equation}
          r_1/2 \leq r_{p_0}(x) \leq L_0 =: r_2,
        \end{equation}
    where $L_0$ denotes the length of the initial curve $\gamma_0$  and $r_1$ comes Lemma \ref{lema:bounds_inner_radius}. Therefore,  it holds
    \begin{equation}
    \rho_{+}(t) \leq r_2 \qquad \text{for all } \quad t\in [0,\mathcal{T}).
        \label{eq:upperboundouterradius}
    \end{equation}
        \begin{proof}
    Let $t_0\in[0,\mathcal{T})$ and $B(p_0,\rho_0)$ be the inball of $\Omega_{t_0}$, by Lemma \ref{lema:innerbal}  we know that
        $  B(p_0, \rho_0/2) \subset \Omega_t, $
        for all $t\in[ t_0 ,   \frak T)$. Fix $t\in [ t_0 ,   \frak T)$, since $\Omega_t$ is convex \cite[Theorem 1]{Alexander1975} and $p_0\in \text{int}(\Omega_t)$, for every $q\in \Gamma_t$ there exists a geodesic $\alpha_{q,t}\subseteq \Omega_t$ such that $\alpha_{q,t}$ is the unique minimizer in $\cM$ connecting $q = \alpha_{q,t}(0)$ to $p_0 = \alpha_{q,t}(1)$. 
        
       On a simply connected negatively curved domain we can extend $\alpha_{q,t}$, to give a geodesic $\widetilde{\alpha_{q,t}} :[0,1+\varepsilon]\to \cM$ for some $\varepsilon>0$, which is the unique minimizer in $\cM$ joining $q$ to $\widetilde{q}:= \widetilde{\alpha_{q,t}}(1+\varepsilon) \in \Gamma_t$. Moreover, 
        $\widetilde{\alpha_{q,t}}\big((1,1+\varepsilon)\big)\subset \mathrm{int}(\Omega_t)$. Then Lemma \ref{lema:bounds_inner_radius}, and \eqref{diamL} yield
        \begin{equation}
           r_1 / 2 \leq \rho_0/2 \leq r_{p_0}(q) =  L(\alpha_{q,t}) \leq L(\widetilde{\alpha_{q,t}}) = d(q, \widetilde{q}) \leq \text{diam}(\Gamma_t) \leq L_0.
        \end{equation}

        Finally, by definition $\rho_{+}(t) \leq r_p(x)$ for all $p\in \Omega_t$ and $x\in \Gamma_t$, thus
            $\rho_{+}(t) \leq r_2 .$
        \end{proof}
      
    \end{lemma}

\section{Curvature control under constrained flows} \label{curvature}

\subsection{Bounds on the support function}

Hereafter, for each $t_0\in[0,\mathcal{T})$ let $p_0$ be the center of the inball of $\Omega_{t_0}$. For any $x\in \Gamma_t$,  the support function of $\Gamma_t$ with respect to $p_0$ is defined as 
\begin{equation}
u(x,t):= \sinh{r_{p_0}(x)} \langle \pr, N \rangle 
\label{eq:usupportfunction}
\end{equation}
This notion is borrowed from the hyperbolic space of constant curvature $-1$, but has the handicap that in our setting $\sinh{r_{p_0}(x)} \, \pr$ is not a conformal Killing field, which complicates seriously the analysis hereafter.

\begin{cor}[A priori estimates for the support function]  \label{eq:ufitainferior}
 There exists a constant $c > 0$, depending only on $a$, $b$ and $\gamma_0$, such that, for all $t\in[ t_0 ,  \frak T)$,  the support function  satisfies
\begin{equation}
    0 < 2 c \leq u(x,t) \leq \sinh r_2.
\end{equation}
\end{cor}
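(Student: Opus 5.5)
The upper bound is immediate. Since $|\langle \pr, N\rangle|\le 1$ and $\sinh$ is increasing, Lemma \ref{lema:bounds_rp0_and_upper_outer_radius} gives $r_{p_0}(x)\le r_2$ on $\Gamma_t$ for $t\in[t_0,\frak T)$, hence $u\le \sinh r_{p_0}\le \sinh r_2$.

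For the lower bound we bound the two factors of $u$ from below separately. First, the same lemma gives $r_{p_0}\ge r_1/2$, so $\sinh r_{p_0}\ge \sinh(r_1/2)$. It remains to show that $\langle \pr, N\rangle\ge \delta>0$ with $\delta$ depending only on $a,b,\gamma_0$. Here we use the convexity of $\Omega_t$ (Lemma \ref{lema:preservationofstrictconvexity}) and the fact that $B(p_0,\rho_0/2)\subset\Omega_t$ (Lemma \ref{lema:innerbal}). Fix $x\in\Gamma_t$ and let $\sigma$ be the geodesic through $x$ tangent to $\Gamma_t$, i.e.\ with direction $T(x)$. Since $\Omega_t$ is convex, $\Omega_t$ lies on one side of $\sigma$, so $\operatorname{dist}(p_0,\sigma)\ge \rho_0/2\ge r_1/2$.

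Let $\theta$ be the angle at $x$ between $-\pr$ (pointing toward $p_0$) and $\sigma$, so that $\langle \pr,N\rangle=\sin\theta\ge0$. Consider the geodesic right triangle with vertices $x$, $p_0$, and the foot $q$ of the perpendicular from $p_0$ to $\sigma$. It has hypotenuse $r_{p_0}(x)\le r_2$, a leg $\operatorname{dist}(p_0,\sigma)\ge r_1/2$, and angle $\theta$ at $x$. Since $\KGauss\le -a^2<0$, Toponogov/Rauch comparison with the model of constant curvature $-a^2$ yields $\sinh(a\,d(p_0,q))\le \sinh(a\,r_{p_0}(x))\sin\theta$. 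The model relation $\sinh(a\cdot\text{leg})=\sinh(a\cdot\text{hyp})\sin\theta$ becomes an inequality because the comparison triangle has angles at least as large as those of the original triangle. Hence
$$\langle\pr,N\rangle=\sin\theta\ \ge\ \frac{\sinh(a r_1/2)}{\sinh(a r_2)}.$$
If the direction of the comparison inequality causes trouble, a cruder but safe alternative is the first variation argument: the distance from $p_0$ along $\sigma$ attains its minimum $\ge r_1/2$ at $q$, and Lemma \ref{lema:hessian_inequalities_r}(a) (with the upper constant $b\coth(br)$) controls how fast $\langle\pr,\sigma'\rangle$ can grow from $0$ to $\cos\theta$. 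Over arclength at most $r_2$ this forces $\cos\theta\le 1-\delta(a,b,r_1,r_2)$.

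Combining the two factors gives $u\ge \sinh(r_1/2)\,\sinh(ar_1/2)/\sinh(ar_2)=:2c$. The main obstacle is the angle estimate, specifically getting the direction of the comparison right in variable curvature; the rest is bookkeeping with Lemmas \ref{lema:bounds_inner_radius}–\ref{lema:bounds_rp0_and_upper_outer_radius}.
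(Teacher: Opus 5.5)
Your upper bound, the factor $\sinh r_{p_0}\ge \sinh(r_1/2)$, and the supporting-geodesic step are fine. In a Hadamard surface two geodesics meet at most once, so the convex set $\Omega_t$ lies on one side of $\sigma$ and $\operatorname{dist}(p_0,\sigma)\ge \rho_0/2\ge r_1/2$. The gap is the angle estimate: the comparison you invoke runs in the opposite direction. The bound $\KGauss\le -a^2$ makes the angles of the comparison triangle in curvature $-a^2$ \emph{at least} as large as the true ones, so it gives $\sin\theta\le\sin\tilde\theta_x$. Since the comparison angle at $q$ is $\ge\pi/2$, the hyperbolic law of sines then yields $\sinh(a\ell)\ge \sinh(ac)\sin\theta$, with $\ell=d(p_0,q)$ and $c=r_{p_0}(x)$. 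That is an \emph{upper} bound for $\sin\theta$. Your inequality is in fact false: in $\mathbb H^2(-b^2)$ with $b>a$ one has $\sin\theta=\sinh(b\ell)/\sinh(bc)$. This is strictly smaller than $\sinh(a\ell)/\sinh(ac)$ whenever $\ell<c$, because $\frac{d}{dt}\log\frac{\sinh(at)}{\sinh(bt)}=a\coth(at)-b\coth(bt)<0$. So the constant $2c=\sinh(r_1/2)\sinh(ar_1/2)/\sinh(ar_2)$ is not justified. A lower bound on the angle must come from the \emph{lower} curvature bound $\KGauss\ge -b^2$.

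Your fallback does exactly this and works once made explicit. Put $f(s)=r_{p_0}(\sigma(s))$, with $s=0$ at $q$ and $s=m$ at $x$. Then $f'=\langle\pr,\sigma'\rangle$, $f'(0)=0$, $|f'|<1$, $f\ge r_1/2$, and by \eqref{eq:hessian_r_comparison_general} $a\coth(af)(1-f'^2)\le f''\le b\coth(bf)(1-f'^2)$; in particular $f'\ge 0$ on $[0,m]$. You can then integrate $(\operatorname{artanh} f')'\le b\coth(br_1/2)$ over $m\le d(x,p_0)+d(p_0,q)\le 2r_2$, which gives $\langle\pr,N\rangle\ge 1/\cosh\big(2r_2\, b\coth(br_1/2)\big)$. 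Alternatively, check that $\sinh(bf)\sqrt{1-f'^2}$ is non-decreasing on $[0,m]$; this gives the sharper $\langle\pr,N\rangle\ge \sinh(br_1/2)/\sinh(br_2)$, which is your formula with $a$ replaced by $b$.

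This is a genuinely different route from the paper's. The paper writes $\pr=\sin\varphi\,T+\cos\varphi\,N$, computes $\ps\cos\varphi=\sin\varphi(\kappa-\kcirc\cos\varphi)$, and at a minimum of $\cos\varphi$ obtains $\cos\varphi\ge \varepsilon_0/(b\coth(br_1/2))$. That uses the flow-preserved bound $\kappa\ge\varepsilon_0$ from Lemma~\ref{lema:preservationofstrictconvexity} together with $\kcirc\le b\coth(br)$. The paper's argument is a one-line minimum principle, but its constant depends on $\min\kappa(\cdot,0)$ through $\varepsilon_0$. The repaired geometric argument uses only the fixed-center inball and the diameter bound, so its constant depends only on $b,r_1,r_2$.
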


\begin{proof}
Fix $t\in [t_0, \frak T)$, let $\varphi=\varphi_t$ be the signed counterclockwise angle between $\pr$ and $N$, then
\begin{align}
   & \pr =  \sin \varphi \ T + \cos \varphi \ N,
\end{align}
with $\varphi\in \left[-\frac{\pi}{2},\frac{\pi}{2}\right]$ because $\Gamma_t$ is convex. In order to to bound $\cos \varphi$, using \eqref{eq:avg_T_N_pr}  we compute
\begin{align}
     \partial_s \cos \varphi  = \avg{ \nabla_s \pr, N} + \avg{\pr, \nabla_s N} 
    =  - \kcirc\avg{\pr, T}\avg{\pr, N} + \kappa \avg{\pr, T} =  \sin\varphi (-\kcirc \cos\varphi + \kappa).
\end{align}
Let $s_0 \in \mathbb{S}^1$ be a point where $ \cos \varphi$ attains its minimum. Evaluating at $s_0$ yields
\begin{align}
    0 = \sin\varphi(s_0) (-\kcirc(s_0) \cos\varphi(s_0) + \kappa(s_0)),
\end{align}
which gives two options: $\sin\varphi(s_0)=0$, and thus $\cos \varphi(s) \geq \cos\varphi(s_0)=1$, or $\cos\varphi(s_0) =\frac{\kappa(s_0)}{\kcirc(s_0)}$.
From here, using the lower bound for $\kappa$ from Lemma \ref{lema:preservationofstrictconvexity}, 
the upper bound for $\kcirc$ from \eqref{eq:fita_curvatura_cercle_geodesic}, 
the fact that $\coth(\cdot)$ is decreasing, 
and the lower bound for $r$ from Lemma \ref{lema:bounds_rp0_and_upper_outer_radius}, 
we obtain 
\begin{align}
    \cos \varphi \geq \frac{\varepsilon_0}{b\coth(br_1/2)}=: \varepsilon_0^\ast >0.
\end{align}
Observe that $\varepsilon_0^\ast$ is smaller than $1$ and that the bound doesn't depend on the fixed $t$. Then,  by Lemma \ref{lema:bounds_rp0_and_upper_outer_radius}, for every $t\in[ t_0 ,  \frak T)$  and $x \in \Gamma_t$ we have 
\begin{equation}
u(x,t) = \sinh r_{p_0}(x) \langle \pr , N \rangle \geq \sinh(r_1/2) \, \varepsilon_0^\ast =:  2c > 0.
\end{equation}

The upper bound for $u$ follows directly from Lemma \ref{lema:bounds_rp0_and_upper_outer_radius}.
\end{proof}

\begin{prop} \label{eq:heat_operator_u}
   The support function $u$ satisfies the following evolution equation:
\begin{align}
    \Box u  &= h \cosh r (1-|\pr^{\tang}|^2) + (h-\kappa) \sinh r \avg{\nabla_{N} \pr, N}  - u |\pr^{\tang}|^2  +  u \kappa^2  - \kappa \sinh r \avg{\nabla_s\pr, T}   \\
& - \cosh r \avg{\nabla_s \pr, T} \avg{N, \pr}  - 2\cosh r \avg{\pr, T} \avg{\nabla_s \pr, N} - 2\kappa \cosh r |\pr^{\tang}|^2 \\
& + \sinh r \left[ \ps \kcirc \avg{T, \pr}\avg{N, \pr} + \kcirc\left(\kappa (2|\pr^{\tang}|^2-1) + \avg{T, \nabla_{s} \pr}\avg{N, \pr} + \avg{T, \pr}\avg{N, \nabla_{s} \pr} \right)\right]. 
\end{align}
\begin{proof} 
Setting $r = r_{p_0}$ for simplicity, let us begin by computing 
\begin{align}
\ps^2 u &= (\ps^2 \sinh r)  \avg{\pr, N} + 2 \ps \sinh r \, \ps \avg{\pr, N} +\sinh r  \, \ps^2 \avg{\pr, N}. \label{eq:nabla_u_1}
\end{align}

We now analyse each term in \eqref{eq:nabla_u_1} separately. Applying \eqref{Frenet}, we get
\begin{align*}
\ps^2 \sinh r &= (\sinh)''(r) (\ps r)^2 + (\sinh)'(r) \ps^2 r \\
&= \sinh r \avg{T, \pr }^2 + \cosh r  \big( \avg{\nabla_s \pr, T } - \kappa \avg{\pr, N}\big).
\end{align*}
 If $\kcirc$ denotes the curvature of the geodesic circle of radius $r$ centered at $p_0$ defined in \eqref{eq:def_kcirc}, thanks to \eqref{eq:avg_T_N_pr} and Frenet formulas, we have
\begin{align*}
\ps^2 \avg{\pr, N} =&  \ps \avg{\nabla_s\pr, N} + \ps \avg{\pr, \nabla_s N} \\
=& - \kcirc\Big[\kappa (2|\pr^{\tang}|^2-1) + \avg{T, \nabla_{s} \pr}\avg{N, \pr} + \avg{T, \pr}\avg{N, \nabla_{s} \pr} \Big]\\
&    -\ps \kcirc \avg{T, \pr}\avg{N, \pr}   + \ps\kappa \avg{\pr, T} + \kappa\avg{\nabla_s \pr, T} -\kappa^2 \avg{\pr,  N},
\end{align*}
which follows from $\avg{\partial_r, T}^2 - \avg{\partial_r, N}^2 = 2|\pr^{\tang}|^2-1$. Now, we can write
\begin{align*}
\avg{\ps \sinh r, \ps \avg{\pr, N}}  
&= \cosh r \avg{\pr, T} \avg{\nabla_s \pr, N} + \kappa \cosh r |\pr^{\tang}|^2.
\end{align*}

On the other hand, the evolution equation of $r$ \eqref{eq:evol_of_r} and Lemma \ref{evol_eq} (b)  ensure
\begin{align*}
\partial_t u 
&= (h-\kappa) \cosh r (1-|\pr^{\tang}|^2) + \sinh r \avg{\nabla_{t} \pr, N} + (\ps \kappa) \sinh r \avg{\pr, T},
\end{align*}
 Combining all the above expressions, we finally reach the formula in the statement.
\end{proof}
\end{prop}

\subsection{Curvature bounds}
Given the constant $c$ from \eqref{eq:ufitainferior}, we consider the auxiliary function 
        \begin{equation}
            {\mathscr W}(x,t)=\frac{ \kappa(x,t) }{u(x,t)-c}.
            \label{eq:auxiliary_W} \qquad \text{for} \quad t\in[t_0, \frak T).
        \end{equation}
  By Corollary \ref{eq:ufitainferior} it suffices to estimate ${\mathscr W}$ from above in order to reach an upper bound for $ \kappa $.

\begin{prop}\label{prop:curvature_bounded}
	Let $(\cM,g)$ be a pinched Hadamard surface and let $\gamma(\cdot,t)$ be a smooth convex solution of the flow \eqref{eq:ccf_kappa}, with global term $h(t)$ as in  \eqref{eq:def_h_LPCF_f}, on $[0,\mathcal{T})$. Then 
	\begin{equation}
	\max_{\Gamma_t}\kappa \leq \overline{\kappa}_0
	\end{equation}
	for any $t\in[0,\mathcal{T})$, where $\overline{\kappa}_0$ depends on $\gamma_0$, $a$ and $b$.
\end{prop}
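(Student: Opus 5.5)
We follow the Tso/Huisken-type strategy already prepared by the auxiliary function ${\mathscr W}=\kappa/(u-c)$ in \eqref{eq:auxiliary_W}. Fix $t_0\in[0,\mathcal T)$, let $p_0$ be the inball centre of $\Omega_{t_0}$, and work on $[t_0,\frak T)$ with $\frak T=\min\{\mathcal T,t_0+\tau\}$ from Lemma \ref{lema:innerbal}. On this interval we have uniform control of all the geometry:
\begin{itemize}
\item Lemma \ref{lema:bounds_rp0_and_upper_outer_radius} gives $r_1/2\le r_{p_0}\le r_2$.
\item Corollary \ref{eq:ufitainferior} gives $c\le u-c\le \sinh r_2$.
\item \eqref{eq:fita_curvatura_cercle_geodesic} and Lemma \ref{lema:hessian_inequalities_r} bound $\kcirc$ and all Hessian terms $\avg{\nabla_X\pr,Y}$ by constants depending only on $a,b,r_1$.
\end{itemize}
The aim is an upper bound ${\mathscr W}\le C(a,b,\gamma_0)$ on $[t_0,\frak T)$ with $C$ independent of $t_0$. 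Since $\tau$ depends only on $a,b,\gamma_0$, covering $[0,\mathcal T)$ by such intervals then gives the uniform bound $\kappa\le (\sinh r_2)\,C=:\overline\kappa_0$.

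\textbf{Step 1: the evolution of ${\mathscr W}$.} From Lemma \ref{evol_eq}(e) with $F=h-\kappa$ we get $\Box\kappa=(\kappa-h)(\KGauss+\kappa^2)$. Combining this with Proposition \ref{eq:heat_operator_u}, the standard quotient computation gives
\[
\Box{\mathscr W}=\frac{\Box\kappa}{u-c}-\frac{\kappa\,\Box u}{(u-c)^2}+\frac{2}{u-c}\,\ps{\mathscr W}\,\ps u .
\]
At a spatial maximum the gradient term is harmless.

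The top-order contributions are the following.
\begin{itemize}
\item From $\Box\kappa$: the term $\kappa^3/(u-c)$.
\item From $\Box u$, the $\kappa^2$-terms: $u\kappa^2$ and $-\kappa\sinh r\avg{\nabla_N\pr,N}$. Note that $\sinh r\avg{\nabla_N\pr,N}\ge \sinh r\, a\coth(ar)|\pr^{\tang}|^2\ge0$. This gives $-\kappa^3 u/(u-c)^2$ plus a favourable term.
\end{itemize}
Writing $\kappa={\mathscr W}(u-c)$, the cubic part is
\[
{\mathscr W}^3(u-c)\big[(u-c)-u\big]=-c\,(u-c)\,{\mathscr W}^3\le -c^2{\mathscr W}^3 .
\]
This negative cubic term is the key good term.

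\textbf{Step 2: controlling the remaining terms.} All other terms are at most quadratic in $\kappa$, with coefficients bounded by the uniform geometric bounds above. They include $h\cosh r(\dots)$, $h\sinh r\avg{\nabla_N\pr,N}$, $-2\kappa\cosh r|\pr^{\tang}|^2$, the $\kcirc$ and $\ps\kcirc$ terms, $-h\kappa^2$, and $\kappa\KGauss$.

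Two of these need separate attention.
\begin{itemize}
\item $h$ must be bounded linearly by $\max\kappa$. This holds since $0<h\le\max\kappa$ for $\alpha=0,1$ by convexity, and $\max\kappa\le(\sinh r_2){\mathscr W}_{\max}$.
\item $\ps\kcirc$ must be bounded. This follows by differentiating the Riccati equation $\partial_r\kcirc=-\KGauss-\kcirc^2$ along directions tangent to the circles, using a bound on $\nabla\KGauss$.
\end{itemize}
With these, at a maximum point we obtain
\[
\frac{d}{dt}{\mathscr W}_{\max}\le -c^2{\mathscr W}_{\max}^3+C_1{\mathscr W}_{\max}^2+C_2{\mathscr W}_{\max}+C_3 .
\]
By the maximum principle this gives ${\mathscr W}\le\max\{{\mathscr W}(\cdot,t_0),C_4\}$.

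To make the constant independent of $t_0$, we would actually compare with the ODE solution blowing up at $t_0$. The bound ${\mathscr W}\le C(1+(t-t_0)^{-1/2})$ then follows from the cubic absorption alone. Evaluating at $t=t_0+\tau/2$ gives a bound independent of the initial data on each window, and windows starting at $t_0$ and $t_0+\tau/2$ overlap to cover everything. On $[0,\tau]$ we use $\kappa(\cdot,0)$ directly.

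\textbf{Main obstacle.} I expect the hardest part to be checking that no term in $\Box u$ hides a bad $\kappa^2$ contribution with the wrong sign beyond those identified above, for example the $\kcirc\kappa(2|\pr^{\tang}|^2-1)$ term. Each such term must be shown to be genuinely lower order after division by $(u-c)^2$.

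The other delicate point is the bound on $\ps\kcirc$. Pinching alone may not give it. If that turns out to be the case, I would first try to avoid derivatives of $\kcirc$ altogether by returning to the formula $\ps^2\avg{\pr,N}=\ps\avg{\nabla_s\pr,N}+\dots$ and estimating $\avg{\nabla_s\pr,N}$ through Lemma \ref{lema:hessian_inequalities_r}(c). Failing that, I would impose a bound on $|\nabla\KGauss|$ on the compact region $B(p_0,r_2)$.
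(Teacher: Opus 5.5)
You are following the paper's proof essentially line by line. The ingredients are the same: the quotient $\mathscr W=\kappa/(u-c)$ on the windows $[t_0,\mathfrak{T})$, the bound $h\le(\sinh r_2-c)\varpi$, and the cubic absorption $\mathscr W^3(u-c)^2-\mathscr W^3u(u-c)=-c(u-c)\mathscr W^3\le -c^2\mathscr W^3$, with every other term at most quadratic in $\mathscr W$. There is one small slip. The $\kappa^2$-part of $-\kappa(h-\kappa)\sinh r\avg{\nabla_N\pr,N}/(u-c)^2$ is $+\mathscr W^2\sinh r\avg{\nabla_N\pr,N}\ge 0$, which is unfavourable (the paper keeps it in its upper bound); only the $h$-part is favourable. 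Being quadratic, it is harmless. Your ending is more careful than the paper's. Iterating $\varpi(t)\le\max\{\varpi(t_0),\sqrt{2\Upsilon}/c\}$ across windows, as the paper does, loses a factor $(\sinh r_2-c)/c\ge 1$ at each re-centring, because after changing the centre one only has $\mathscr W_{\mathrm{new}}(t_0)\le \kappa/c$. Your bound $\mathscr W\le C(1+(t-t_0)^{-1/2})$ comes from the cubic term alone, forgets the value of $\mathscr W$ at $t_0$, and is what really makes the overlapping-window argument close.

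The step that does not close under the stated hypotheses is the bound on $\ps\kcirc$, and your worry is justified. Only the radial part of $\nabla\kcirc$ is controlled, by the Riccati equation and the pinching. The part tangent to the geodesic circles is governed by the angular derivative of the Jacobi equation, whose forcing term is the angular derivative of $\KGauss$. So your main route imports a bound on $|\nabla\KGauss|$ that the statement does not assume. Your first fallback cannot work either: $\ps\avg{\nabla_s\pr,N}=-\ps\big(\kcirc\avg{T,\pr}\avg{N,\pr}\big)$ simply reproduces $\ps\kcirc$. The reason is that $\Box u$ unavoidably contains third derivatives of $r$, while Lemma~\ref{lema:hessian_inequalities_r}(c) only controls second ones. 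The paper uses no derivative bound on $\KGauss$ here. Since $r_{p_0}$ is smooth away from $p_0$, $\kcirc$ is smooth on the compact annulus $\mathcal A=\{r_1/2\le r_{p_0}\le r_2\}$, and the paper sets $\mathbf{k_s}:=\max_{\mathcal A}|\ps\kcirc|$. This constant, however, depends on where $p_0$ lies. Consecutive inball centres are at distance at most $r_2$, so over a finite time interval they stay in a compact set and $\mathbf{k_s}$ is uniform there. That is all Theorem~\ref{teo:k_unbdd_or_T_infty} needs to give $\mathcal T=\infty$. A time-independent $\overline\kappa_0$ depending only on $\gamma_0,a,b$ needs something extra, such as your global bound on $|\nabla\KGauss|$ or confinement of the flow to a compact region. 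As it stands, your proposal proves the proposition only under that additional hypothesis.
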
   

\begin{proof}

Using Proposition \ref{eq:heat_operator_u} and Lemma \ref{evol_eq} (e), we can obtain an upper bound for  the heat operator of $\mathscr W$ by discarding all non-positive terms. Indeed, on $[ t_0, \mathfrak T)$, we get
\begin{align}
    \Box \mathscr W & \leq   \,   \frac{ -h\KGauss}{u-c}+\mathscr W^3(u-c)^2 + 2 \ps \mathscr W  \ps u    + \mathscr W^2  \sinh r \avg{\nabla_{N} \pr, N} +  \frac{\mathscr W}{u-c} u |\pr^{\tang}|^2 -  \mathscr W^3 u(u-c)     \\
    &+  \frac{\mathscr W}{u-c} \left[  \cosh r \avg{\nabla_s \pr, T} \avg{N, \pr}  +  2\cosh r \avg{\pr, T} \avg{\nabla_s \pr, N} \right] + 2 \mathscr W^2     \cosh r |\pr^{\tang}|^2 \\
    & + \mathscr W \sinh r \bigg(  \mathscr W \avg{ \nabla_s \pr, T} -  \frac{\ps \kcirc}{u-c}   \avg{T, \pr}\avg{N, \pr} + \mathscr W   \kcirc  -\kcirc  \frac{1}{u-c}  \avg{T, \pr}\avg{N, \nabla_{s} \pr} \bigg).
    \label{eq:primera_desig_deriv_W}
    \end{align}

    Setting  $\varpi(t):=\max_{\mathbb{S}^1} \mathscr W(\cdot, t)$, Corollary  \ref{eq:ufitainferior} allows us to estimate the global term in \eqref{eq:def_h_LPCF_f} as
        \begin{align}
            h(t)  =\frac{1}{\int_{\mathbb{S}^1} \kappa^\alpha \ ds}  \int_{\mathbb{S}^1} \mathscr W (u-c)  \kappa^\alpha  \ ds    \leq \varpi (\sinh{r_2}-c) = c_3 \varpi. \qquad 
            \label{eq:fitahqueconservalongitud}
        \end{align}

        On the other hand, we can obtain an explicit bound for $\kcirc$ by combining  \eqref{eq:fita_curvatura_cercle_geodesic} with Lemma~\ref{lema:bounds_rp0_and_upper_outer_radius}:
\begin{align}
    a \coth \left(a r_2 \right) \leq \kcirc \leq b \coth\left(b \frac{r_1}{2}\right).
    \label{eq:fita_kcirc}
\end{align}

Next, from Lemma~\ref{lema:bounds_rp0_and_upper_outer_radius},  for every  $t \in [t_0, \mathfrak T)$ we have the inclusion
\begin{align} \label{eq:def_donut}
    \Gamma_t \subset \bigcup_{r \in [r_1/2, r_2]} B_r =: \mathcal A,
\end{align}
where $S_r = \partial B_r$  denotes the geodesic circle as in  \eqref{eq:geodesic_circle_centered_p} centered at $p_0$ of radius $r$. Since the annular region $\mathcal A \subset \mathcal M$ is compact and $\ps \kcirc$ is continuous, we achieve a uniform bound
\begin{align} 
    \sup_{\Gamma_t} | \ps \kcirc | \leq \max_{\mathcal A} |\ps \kcirc| =: \mathbf{k_s}.
    \label{eq:fita_ps_kcirc}
\end{align}

To estimate $|\avg{ \nabla_s \pr, N}|$,
from inequality~\eqref{eq:desigualtat_hessiaR_tangentinormal} observe that  the left-hand side can be negative. However, since the term $1 - 2 \langle T, \pr \rangle \langle N, \pr \rangle$
is bounded between $-1$ and $3$, we obtain
\begin{align}
    \left|  a \coth(a r)  \left(1 - 2 \langle T, \pr \rangle \langle N, \pr \rangle \right) -  b \coth(b r) \right|  \leq \max \left\{ A+B, |3A -B| \right\}.
    \label{eq:desigualtats_part_dreta_hessia_r_mixte}
\end{align}
where we have used the notations
\[
A := a \coth(a r) \leq   b \coth(b r)=:B,
\]
as $x\mapsto  x\, \coth(xr)$ is non-decreasing for fixed $r>0$. This implies the following inequalities:
\begin{align*}
    \max\{ A + B, 3A -B\} \leq 2B \leq 3B - A, \quad \text{and} \quad 
    B - 3A \leq B - A \leq 3B - A.
\end{align*}

Therefore, we attain the refined estimate
\begin{align}
    |\avg{ \nabla_s \pr, N}| 
    &\leq \frac{1}{2} \left( 3 B - A\right) \leq \frac{1}{2} \left( 3 b \coth\left(b \tfrac{r_1}{2}\right) - a \coth(a r_2) \right) =: \frac{1}{2} \zeta,
    \label{eq:desigualtat_hessiaR_tangentinormal_modul}
\end{align}
which follows from Lemma \ref{lema:bounds_rp0_and_upper_outer_radius} and the fact that  $r \mapsto c \coth(cr)$ is non-increasing for any $c>0$.

Using the bounds for $h$ \eqref{eq:fitahqueconservalongitud}, for $\kcirc$  \eqref{eq:fita_kcirc} and its first derivative \eqref{eq:fita_ps_kcirc},  those for $u$  in Corollary \ref{eq:ufitainferior} and for $r_{p_0}$ from Lemma~\ref{lema:bounds_rp0_and_upper_outer_radius}, the Hessian estimates in Lemma \ref{lema:hessian_inequalities_r} and \eqref{eq:desigualtat_hessiaR_tangentinormal_modul}, we reach 
\begin{align}
    \varpi' \leq & \, \varpi \big(  -\varpi^2c^2 +  \varpi \theta  +  \theta_1 \big) \leq \varpi \Big(  - \varpi^2 c^2/2   + \Upsilon \Big)\qquad \text{on} \quad [ t_0, \mathfrak T)
    \label{eq:segona_desig_deriv_W}
    \end{align}
    for uniform constants $\theta$, $\Upsilon$ depending on $r_1, r_2, a$ and $b$.
Maximum principle arguments yield
        \begin{align}
            \varpi(t)\leq \max \big\{ \varpi(t_0), \sqrt{2 \Upsilon}/c \big\}.
\end{align}

        Thus,  from the definitions of $\mathscr W$ and $u$, we conclude
        \begin{align}
            \kappa & = \mathscr W  (u-c) \leq (\sinh(r_2)-c) \max  \Big\{ \max_{\mathbb{S}^1}\mathscr W(\cdot, t_0), \sqrt{2 \Upsilon}/c \Big\} 
        \end{align}
        on $[t_0, \mathfrak T)$. As this occurs for any $t_0$ and $\tau$ does not depend on $t_0$ (Lemma \ref{lema:innerbal}), the iterated application of the above inequality from $t_0=0$ leads to the desired uniform bound.
\end{proof}

Now we are in position to establish the sought long time existence result. 
 \begin{proof}[Proof of Theorem \ref{main} {\rm (1)} and {\rm (2)}]
By Theorem \ref{teo:becomes_strict_convex},  $\Gamma_t$ is strictly convex for all $t>0$.
By Proposition \ref{prop:curvature_bounded}, the curvature 
$\kappa$ remains uniformly bounded for all  $t\in[0,\mathcal{T})$. Thus by Theorem~\ref{teo:k_unbdd_or_T_infty} the solution can be extended  infinitely, and hence $\mathcal T = \infty$.
\end{proof}

  \section{AP-CSF on rotationally symmetric  pinched Hadamard surfaces} \label{rotation}
  
  Hereafter we need to focus on a more particular setting so that there is a unique candidate to solution of the isoperimetric problem, which should be our natural limit around which we can study stability of the flow. We will concentrate on the area-preserving case.
  
  \subsection{Rotational symmetry}\label{sec:rotationally-symmetric} 
  $(\cM, g)$ is rotationally symmetric with respect to a point $\mathfrak{p} \in \cM$, referred to as the pole, if  in polar coordinates $(r, u)$ centered at $\mathfrak{p}$ the metric can be written as
  \begin{equation}
  g = dr^2 + \varphi^2(r)\, g_{\mathbb{S}^1},
  \label{eq:metricrotsymmetric}
  \end{equation}
  where $r \in I \subset [0,\infty)$ is the radial distance, $\varphi(r)>0$ and $g_{\mathbb{S}^1}$ denotes the usual metric on $\mathbb{S}^1$.

  On this surface, we define the conformal vector field $X=\varphi(r)\pr$ (see \cite{Montiel1999}). It can be proven that $\nabla_Y X = \varphi'(r) Y$ for any tangent vector $Y$ to $\cM$.
  For the Gauss curvature, it holds
  \begin{equation}
  \KGauss= - \varphi''(r)/\varphi(r).
  \label{eq:Kgaussym}
  \end{equation}
  As our  surfaces are negatively curved  and $\varphi$ is positive, we deduce that $\varphi'$ is  increasing.
  Moreover, arguing as in \cite[Remark 1]{CaMi2}, we have
  \begin{equation}
  \varphi(0) := \lim_{r \to 0} \varphi(r) = 0 \quad \text{and} \quad  \varphi'(0) := \lim_{r \to 0} \varphi'(r) = 1.
   \label{eq:limita0varphideriv}
  \end{equation}
 Thus we conclude that $
  \varphi'(r) > 0$ with $r \in (0, \infty),$  and thus $\varphi$ is increasing.

In this setting, the curves with constant curvature and bounded radial distance to $\frak p$ can be unduloids (periodic graphs over $\mathbb S^1$), nodoids (curves whose tangent vector is vertical at some point) or {\it  geodesic circles} around $\frak p$ (see \cite[Lemma 2.17]{ritoreisoperimetric}). The latter becomes the only option 
 if we also require that $\KGauss$ is decreasing with respect to $r$ (cf. \cite[Lemma 2.18]{ritoreisoperimetric}). Set
\begin{equation}
    \psi:= (\varphi')^2-\varphi\varphi'',
    \label{eq:definicio_psi_varphi}
\end{equation}
which has, up to multiplication by a positive function, the same derivative with respect to $r$ as the Gauss curvature $\KGauss(r)$ (see \cite[\S 2.6.1]{ritoreisoperimetric}).
 Therefore
\begin{equation}
    \psi \text{ decreasing and } \psi(r)<1 \text{ for all } r>0.
    \label{eq:hipotesis_psi}
\end{equation}

\subsection{Reformulation  in terms of the radial distance}

By applying a tangential reparametrization, the solution of {\sc ap-csf}
$\gamma : \mathbb{S}^1\times [0,\mathcal{T}) \longrightarrow \cM$
can be written as
    \begin{align}
         \gamma(u,t)= \exp_{\mathfrak{p}} r(u,t)u = (r(u,t), u)  
          \label{eq:flowuptotgdiffeo}  
    \end{align}
 in normal coordinates with respect to the pole $\mathfrak{p}$, while \eqref{eq:ccf_kappa} is equivalent to 
\begin{align}
     \partial_t r = & \frac{v(r)}{\varphi(r)}(h-\kappa) = \frac{v(r)}{\varphi(r)}   h +  \frac{1}{ v^2(r)}\hat{\Delta}r -  \frac{\varphi'(r)}{\varphi(r)}\left(1 + \frac{(\pu r)^2}{v^2} \right),
     \label{eq:11_flow_r}
\end{align}
where $\hat{\Delta} = \partial^2_u$ denotes the Laplacian for the sphere metric $\mathbb{S}^1$, and $v(r) =\sqrt{ (\pu r)^2 + \varphi(r)^2}$.

To study the stability of geodesic spheres under the flow, we consider curves in a neighbourhood of a geodesic sphere $ \mathcal{S}=\partial B (\mathfrak{p}, \mathfrak{r}) $  centered at the pole $\mathfrak{p}$.
As before, we can parametrize $\mathcal{S}$ by means of a map
$u \mapsto \exp_{\mathfrak{p}}(\mathfrak{r}u).$ We will say that the curve $\Gamma_t$ is $\varepsilon$-close to $\mathcal{S}$ if it admits a parametrization of the form \eqref{eq:flowuptotgdiffeo} such that the radial distance $r = r_{\mathfrak{p}} \circ \gamma$  belongs to
    \begin{equation}
        \mathbb{U}_\varepsilon = \left\{ r : \mathbb{S}^1\to \mathbb{R} \ \mid \|r - \mathfrak{r}\|_{ C^1(\mathbb{S}^1)} < \varepsilon \right\} . 
        \label{eq:u_varepsilon_entorndelradi}
    \end{equation}

    For $\alpha \in (0,1)$, we shall consider functions $r$ in 
    \begin{equation}
    \mathbb{U}_\varepsilon \, \cap h^{2+\alpha}(\mathbb{S}^1),
    \end{equation}
    so that  \eqref{eq:11_flow_r} is well defined. Here the little-Hölder space $h^{2+\alpha}(\mathbb{S}^1)$ is the closure of smooth functions in  $C^{2,\alpha}(\mathbb S^1)$. 
   Interpolation properties of 
    Hölder-type spaces will be used at several points; we refer the reader to 
    \cite{Lunardi1995,DaPratoGrisvard1979} for a detailed exposition.

\subsection{The operator $\mathcal{G}$ and its linearization}\label{section:the_operator_G_linealization} 
 \eqref{eq:11_flow_r} can be rewritten as
\begin{equation}
\frac{dr}{dt} + \mathcal{G}(r) = 0, \quad \text{with} \quad     \mathcal{G}(r) := \frac{v(r)}{\varphi(r)} (\mathcal{K} - \overline{\mathcal{K}})(r).
     \label{eq:22_edo_operador_G}
\end{equation}
Here we have encoded the curvature operator and its average as
\begin{equation}
\mathcal{K}(r) := \displaystyle \frac{-\varphi(r)}{v^3(r)} \hat{\Delta}r + \frac{\varphi'(r)}{v(r)}\left(1 + \frac{(\pu r)^2}{v^2(r)}\right), \quad \text{and} \quad 
 \overline{\mathcal{K}}(r) =  \fint_{\mathbb{S}^1} \mathcal{K}(r)\, \mu(r),
\label{eq:12_expressionKquasilineal}
\end{equation}
where $\fint_{\mathbb{S}^1} = \frac1{\mathfrak{L}(r)} \int_{\mathbb S^1}$, being $\mathfrak{L}(r)= \int_{\mathbb{S}^1} \mu(r)$  
and  the volume element is given by
\begin{equation}
 \mu(r) = \frac{\varphi^2(r)}{v(r)}\, \hat{\mu} + \frac{\partial_u r}{v(r)} \, dr,
\label{eq:19_volume_element}
\end{equation}
where $\hat{\mu}$ denotes the standard volume element on $\mathbb{S}^1$.

We will use the existence of solutions and of
center manifolds for quasilinear Cauchy problems in \cite[Theorem 3.1 and 4.1]{simonettCenterManifoldsQRDS}. To that end, given $U := \mathbb{U}_\varepsilon \cap h^{1+\alpha}(\mathbb{S}^1)$, the operator $\mathcal{G}$  admits a quasilinear decomposition  as
\begin{equation}
    \mathcal{G}(r) = A(r)r + B(r) \quad \text{ for all } r \in U \cap h^{2+\alpha}(\mathbb{S}^1),
    \label{eq:*_quasilinear_decomp_G}
\end{equation}
where $A \in C^\infty(U, \mathcal{L}(h^{2+\alpha}(\mathbb{S}^1), h^\alpha(\mathbb{S}^1)))$ and $B \in C^\infty(U, h^\alpha(\mathbb{S}^1))$ are given by
    \begin{align}
  \hspace*{-0.2cm}  A(r) \cdot = \frac{v(r)}{\varphi(r)} \left(\!P(r) \cdot -  \fint_{\mathbb{S}^1} (P(r) \cdot) \mu(r) \!\right) \!\quad \text{and} \quad B(r) = \frac{v(r)}{\varphi(r)} \left(\! Q(r) -  \fint_{\mathbb{S}^1} Q(r) \mu(r) \!\right).
    \label{eq:AoperadorquasilinealG}
    \end{align}
Here we have used the notations 
	$P(r) = \frac{- \varphi(r)}{v^3(r) } \hat{\Delta}$ and $Q(r)= \frac{\varphi'(r)}{v(r)}\left(1 + \frac{(\pu r)^2}{v^2(r)}\right).$ From these expressions, $P$ and $Q$ depend smoothly upon $r$, and $P(r)r, Q(r) \in h^{\alpha}( \mathbb{S}^1)$.

\begin{lemma}[Linearization of $\mathcal{G}$] \label{eq:23_linearization_G}
   Given $\psi$ as in \eqref{eq:definicio_psi_varphi},  $L := \mathcal{G}_{*\mathfrak{r}} \in \mathcal{L}(h^{2+\alpha}(\mathbb{S}^1), h^\alpha(\mathbb{S}^1))$ satisfies
\begin{equation}
    L\eta = - \frac{1}{\varphi(\mathfrak{r})^2} \left[\hat{\Delta}\eta + \psi(\mathfrak{r})\eta\right] - \frac{1}{|\mathcal{S}|} \int_{\mathcal{S}} \psi(\mathfrak{r)}\eta \ ,    
\end{equation}

    \begin{proof}
    For any $\eta \in h^{2+\alpha}(\mathbb{S}^1)$, we get $v(\mathfrak{r}+t\eta) = \sqrt{\varphi^2(\mathfrak{r}+t\eta) + t^2(\partial_u\eta)^2}$.
    Thus $v(\mathfrak{r}) = \varphi(\mathfrak{r})$ and
    \begin{equation} \label{eq:16}
    \left. \frac{d}{dt} \right|_{t=0} v(\mathfrak{r}+t\eta) = \frac{1}{2v(\mathfrak{r})} 2\varphi(\mathfrak{r})\varphi'(\mathfrak{r})\eta = \varphi'(\mathfrak{r})\eta.  
    \end{equation}
    From here, taking \eqref{eq:12_expressionKquasilineal} into account, we reach
    \begin{align*}
   \mathcal K_{*\mathfrak{r}}(\eta) 
    & = \left. \frac{d}{dt} \right|_{t=0} \mathcal{K}(\mathfrak{r}+t\eta)= -\frac{1}{\varphi(\mathfrak{r})^2} \partial_u^2 \eta + \frac{\varphi''(\mathfrak{r})}{\varphi(\mathfrak{r})} \eta - \left(\frac{\varphi'(\mathfrak{r})}{\varphi(\mathfrak{r})}\right)^2 \eta   =- \frac{1}{\varphi(\mathfrak{r})^2} \left[ \hat{\Delta}\eta + \psi(\mathfrak{r})\eta \right] . 
    \end{align*}
    
   By \eqref{eq:12_expressionKquasilineal} at $r = \mathfrak{r}$ the curvature $\mathcal{K}$ is constant, and thus $(\mathcal{K}-\overline{\mathcal{K}})(\mathfrak{r}) = 0$. This leads to
    \begin{align}
        L\eta &= \frac{v(\mathfrak{r})}{\varphi(\mathfrak{r})} (\mathcal{K}_{*\mathfrak{r}} - \overline{\mathcal{K}}_{*\mathfrak{r}})(\eta)= \ -\frac{1}{\varphi(\mathfrak{r})^2}  (\hat{\Delta}\eta + \psi(\mathfrak{r})\eta)  -  \overline{\mathcal{K}}_{*\mathfrak{r}}(\eta).
        \label{eq:26}
    \end{align}

    So it remains to compute the linearization of the averaged curvature operator. Indeed, using the divergence theorem, we reach
    \begin{align*}
        \overline{\mathcal{K}}_{*\mathfrak{r}}(\eta) &= \frac{1}{\mathfrak{L}(\mathfrak{r})} \left[ \int_{\mathbb{S}^1} \mathcal{K}_{*\mathfrak{r}}(\eta) \mu(\mathfrak{r}) + \int_{\mathbb{S}^1}  (\mathcal{K}(\mathfrak{r}) - \overline{\mathcal{K}}(\mathfrak{r})) \mu_{*\mathfrak{r}}(\eta) \right] = -\fint_{\mathbb{S}^1}  \psi(\mathfrak{r})\eta \mu(\mathfrak{r}),
    \end{align*}
     Now,  by \eqref{eq:19_volume_element} it holds $\mu(\frak r) = \varphi(\mathfrak{r}) \hat{\mu}$ and  $\mathfrak{L}(\mathfrak{r})   = \varphi(\mathfrak{r}) |\mathbb{S}^1| = |\mathcal{S}|,$
        where the last equality follows because a geodesic sphere of radius $\mathfrak{r}$ is isometric to a sphere of radius $\varphi(\mathfrak{r})$ in the Euclidean space. Therefore, we get $\overline{\mathcal K}_{*\mathfrak{r}}(\eta) = \frac{-\varphi(\mathfrak{r})}{|\mathcal{S}|}  \int_{\mathbb{S}^1}  \psi(\mathfrak{r})\eta  \hat{\mu}$, and the statement follows by substitution.
    \end{proof}
    \end{lemma}

Next, we study the spectrum of the linearization of the operator $\mathcal{G}$.
    
\begin{prop}[The spectrum of $ -\mathcal{G}_{*\mathfrak{r}}$]\label{Prop:spectrum_-L}
Assuming \eqref{eq:hipotesis_psi}, the spectrum of $-L$ consists of a sequence of negative real numbers
$ \dots < \lambda_{k+1} < \lambda_k < \lambda_{k-1} < \dots < \lambda_1 < \lambda_0 = 0 $. Moreover,
\begin{equation}
\mathcal{L}_1 :=  \ker(-L) = \{1\},
\end{equation}
where $\{1\}$ denotes the space of constant functions on~$\mathbb{S}^1$. In particular,  $\lambda_0$ has multiplicity one.

\begin{proof}
    Let $\phi_i$  be defined as $\Hat{\Delta}\phi_i = \hat{\lambda}_i \phi_i$ for each $i\in \mathbb N_0$.
    Then, for $i \geq 1$, Lemma \ref{eq:23_linearization_G} yields
     \begin{equation}
        (-L)\phi_i = \frac{1}{\varphi(\mathfrak{r})^2} \left[ \hat{\lambda}_i\phi_i + \psi(\mathfrak{r})\phi_i - \frac{1}{|\mathcal{S}|} \varphi(\mathfrak{r}) \psi(\mathfrak{r}) \int_{\mathbb{S}^1} \phi_i \hat{\mu} \right] = \frac{1}{\varphi(\mathfrak{r})^2} (\hat{\lambda}_i + \psi(\mathfrak{r}))\phi_i,
      \label{eq:28}
    \end{equation}
   because it is well-known that $\int_{\mathbb{S}^1} \phi_i = 0$. In turn, for $i=0$, we have $\hat{\lambda}_0=0$ and the eigenfunctions $\phi_0$ are constant, thus $(-L)\phi_0 = 0$, and we conclude that  $\mathcal{L}_1 := \ker(-L)$ contains $\{1\}$.
    
    For  any eigenfunction $\Phi\in L^2(\mathbb{S}^1)$ of $-L$ with eigenvalue $\zeta$, by linearity and \eqref{eq:28}, we get
    \begin{align*}
      \zeta \Phi =  (-L)\Phi  =    (-L) \sum_{i=0}^{\infty} \Phi^i \phi_i &= \sum_{i=0}^{\infty} \Phi^i (-L)(\phi_i) = \sum_{i=1}^{\infty} \Phi^i \frac{1}{\varphi(\mathfrak{r})^2} (\hat{\lambda}_i + \psi(\mathfrak{r}))\phi_i,
    \end{align*}
  with $\Phi^i = \langle \Phi, \phi_i \rangle_{L^2}$. From here, it also holds
    \begin{equation} 0 = \zeta \Phi^0 \phi_0 + \sum_{i=1}^{\infty} \Phi^i \left( \zeta - \frac{\hat{\lambda}_i + \psi(\mathfrak{r})}{\varphi(\mathfrak{r})^2} \right) \phi_i \end{equation}
    
    As $\{\phi_i\}_{i=0}^{\infty}$ are linearly independent 
    and $\Phi \neq 0$ for being an eigenfunction, there exists $i_0$ with $\Phi^{i_0} \neq 0$. If $i_0=0$, then $\zeta =0$;  $i_0 \geq 1$ gives
    $ \zeta = \frac{\hat{\lambda}_{i_0} + \psi(\mathfrak{r})}{\varphi(\mathfrak{r})^2} \ . $ In short, for $i \geq 1$ $\phi_i$ is an eigenfunction of $\hat{\Delta}$ with eigenvalue $\hat{\lambda}_i = -i^2$ if and only if $ \phi_i$ is an eigenfunction of $(-L)$ with eigenvalue $\lambda_i = \frac{-i^2 + \psi(\mathfrak{r})}{\varphi(\mathfrak{r})^2}$; and $\phi_0$ is an eigenfunction of $(-L)$ with eigenvalue $0$. This leads to the statement taking into account \eqref{eq:hipotesis_psi}.
   \end{proof}
\end{prop}

\subsection{Short time existence and  center manifold}
We aim to verify that \eqref{eq:22_edo_operador_G} satisfies the hypotheses of \cite[Theorem 3.1 and 4.1]{simonettCenterManifoldsQRDS}. In this context, we consider the  spaces $E_1 = h^{2+\alpha}(\mathbb{S}^1)$ and $E_0 = h^\alpha(\mathbb{S}^1)$, and we denote by $E_\alpha = (E_0,E_1)_\alpha$ for $ \alpha\in (0,1)$ the corresponding real interpolation space. For simplicity, we shall often omit  $\mathbb{S}^1$ when denoting these spaces.

 We start with a crucial maximal regularity result (check \cite{Angenent1990} for the precise definition of $\mathcal M_\gamma$):

\color{black}

\begin{lemma}\label{prop:maximal_regularity_operator_A}
For any $r \in \mathbb{U}_\varepsilon \cap h^{1+\alpha}(\mathbb{S}^1)$, the linear operator $A(r) \in \mathcal{L}(h^{2+\alpha}(\mathbb{S}^1), h^\alpha(\mathbb{S}^1))$, which is defined as in \eqref{eq:AoperadorquasilinealG}, satisfies 
\begin{equation}
A(r) \in \mathcal{M}_\gamma(h^{2+\alpha}(\mathbb{S}^1), h^\alpha(\mathbb{S}^1)) \quad \text{for any } \gamma \in (0,1). 
\label{eq:cond_4.60}
\end{equation}
\begin{proof}
Fix any $r \in \mathbb{U}_\varepsilon \cap h^{1+\alpha}$. We can decompose $A(r) = R_1 + R_2$, being
\begin{equation}
R_1 h =  -\frac{1}{v^2(r) \ } \Hat{\Delta}h \quad \text{and} \quad R_2 h = -\frac{v(r)}{\varphi(r)}  \fint_{\mathbb{S}^1} P(r) h \, \mu(r).
\label{eq:sum_decomposition_operator_R_A_4.60}
\end{equation}
For any fixed $\sigma \in (0, \alpha)$, notice that $A(r) \in \mathcal{L}(h^{2+\sigma}(\mathbb{S}^1), h^\sigma(\mathbb{S}^1))$. Now, given  $\xi \in T_p^*\mathbb{S}^1$, for the principal symbol of $R_1$ we have
\begin{align}
s(\cdot, \xi) := (\sigma R_1)_p(\xi) = \frac{1}{v^2(r) }  \xi_1 \xi_1 \leq \frac{1}{\varphi^2(r)} |\xi|^2  \leq \frac{|\xi|^2}{\varphi^2(\mathfrak{r} - \varepsilon)} = C_1( \mathfrak{r}, \varepsilon) |\xi|^2,
\end{align} 
where we have used that, as $r \in \mathbb{U}_\varepsilon$,  $|r(\cdot) - \mathfrak{r}| \leq \|r - \mathfrak{r}\|_{C^1} < \varepsilon$  and $\varphi' > 0$. In turn, we also get
\begin{align*} 
s(\cdot, \xi) & = \frac{1}{\left(\varphi^2(r)+(\pu r)^2 \right)  }  \xi_1 \xi_1  \geq \frac{|\xi|^2}{  \varphi^2(\mathfrak{r}+ \varepsilon )  + \varepsilon^2 } = C_2( \mathfrak{r}, \varepsilon) |\xi|^2, 
\end{align*}
which follows from $|\pu r| = |\pu (r-\mathfrak{r})| \leq \|r-\mathfrak{r}\|_{C^1} < \varepsilon$.
Thus we conclude that  $R_1$ is a uniformly elliptic operator in  $\mathbb{S}^1$. Hence,   
\cite[Theorem 4.2, Remark 4.6, Remark 3.1 (c)]{Am} together with a density argument ensure that $-R_1$ generates a strongly continuous analytic semigroup on $h^\sigma$.

Now, from Corollary 1.2.19 in \cite{Lunardi1995}, together with a density argument, we have
\begin{equation}
h^\alpha = (h^\sigma, h^{2+\sigma})_{(\alpha-\sigma)/2}. 
\label{eq:h_alpha_result_interpolation}
\end{equation}
Using the same notation for $R_1$ as an operator either in $\mathcal{L}(h^{2+\alpha}, h^\alpha)$ or in $\mathcal{L}(h^{2+\sigma}, h^\sigma)$, we have
\begin{equation}
R_1^{-1}(h^\alpha) \cap h^\alpha \cap h^{2+\sigma} = h^{2+\alpha} \cap h^\alpha  \cap h^{2+\sigma} = h^{2+\alpha},
\label{eq:equivalence_h2alpha_R1_spaces}
\end{equation}
where in the last equality we have used the continuous embedding $ h^{2+\alpha} \subset h^{2+\sigma} $.

Note that, using our conclusions about $-R_1$, \eqref{eq:h_alpha_result_interpolation} and \eqref{eq:equivalence_h2alpha_R1_spaces},  the hypotheses of Theorem \cite[Theorem 2.14]{Angenent1990} are satisfied, which yields $R_1 \in \mathcal{M}_\gamma(h^{2+\alpha}, h^\alpha)$ for any $\gamma \in (0,1)$.

On the other hand, by its definition, $R_2$ can be regarded as an operator in $\mathcal{L}(h^{1+\alpha}, h^\alpha)$.  Therefore, applying the perturbation result from Lemma 2.7 (c) in \cite{ClementSimonett2001} to the spaces $E_0 = h^\alpha$, $E_1 = h^{2+\alpha}$ and $E_\gamma = h^{1+\alpha} = (h^\alpha, h^{2+\alpha})_{1/2}$, it follows $R = R_1 + R_2 \in \mathcal{M}_\gamma(h^{2+\alpha}, h^\alpha).$
\end{proof}
\end{lemma}

Now we are in position to prove:
\begin{thm}[Short time existence]\label{teo:short_time_existence_r}
    Let $\beta \in (\alpha, 1)$ be fixed. For every $r_0\in \mathcal{V} := \mathbb{U}_\varepsilon \cap h^{1+\beta}(\mathbb{S}^1)$, there exists a  $T = T(r_0) > 0$ such that \eqref{eq:22_edo_operador_G} has a unique maximal solution
$$r \in C([0, T), \mathcal{V}) \cap C^\infty\big(\mathbb{S}^1 \times (0, T) \big).$$
\begin{proof}
First, observe that  \eqref{eq:h_alpha_result_interpolation} implies, in particular, that
\begin{equation}
    h^{1+\alpha} = (h^\alpha, h^{2+\alpha})_{1/2} \quad \text{and} \quad h^{1+\beta} = (h^\alpha, h^{2+\alpha})_{(1+\beta-\alpha)/2}.
\end{equation}
Moreover, as in \cite[proof of Theorem 2.11.1]{Am2},  we get following the dense and continuous injections:
\begin{equation}
h^{2+\alpha}(\mathbb{S}^1) \stackrel{d}{\hookrightarrow} h^{1+\beta}(\mathbb{S}^1) \stackrel{d}{\hookrightarrow} h^{1+\alpha}(\mathbb{S}^1) \stackrel{d}{\hookrightarrow} h^\alpha(\mathbb{S}^1). 
\label{eq:inclusio_h_short_time_existence}
\end{equation}
The proof consists in verifying that \eqref{eq:22_edo_operador_G} satisfies the hypotheses in \cite[Theorem 3.1]{simonettCenterManifoldsQRDS}, taking
\begin{equation} \label{ansatz}
(E_0, E_1) = (h^{\alpha}, h^{2+\alpha}), \quad \gamma = 1/2 , \quad \mu = (1+\beta-\alpha)/2.
\end{equation}
 Indeed,
 if $A$ and $B$ are as in \eqref{eq:AoperadorquasilinealG}, from \eqref{eq:inclusio_h_short_time_existence}, it holds $\mathcal{V} = h^{1+\beta} \cap \mathbb{U}_\varepsilon \subset h^{1+\alpha} \cap \mathbb{U}_\varepsilon = U$; therefore, Lemma \ref{prop:maximal_regularity_operator_A} implies in particular that
    $     A(u) \in \mathcal{M}_\mu(h^{2+\alpha}, h^\alpha)$ for every  $u \in \mathcal{V}.  $

Thus, given $r_0 \in \mathcal{V}$,  \cite[Theorem 3.1]{simonettCenterManifoldsQRDS}  implies the existence of a constant $T = T(r_0) > 0$ and a unique maximal solution
$r \in C([0, T), \mathcal{V}) \cap C^\infty((0, T), h^{2+\alpha})$
to \eqref{eq:22_edo_operador_G} with  $r(0) = r_0$. Finally, the argument in the proof of  \cite[Theorem 1]{EscherSimonett1997} yields that the solution is smooth.
\end{proof}
\end{thm}

Hereafter we fix $r_0 \in \mathcal{V}$, with $\mathcal{V}$ as in Theorem \ref{teo:short_time_existence_r}.   Adapting the proof of \cite[Lemma 6.1]{EscherSimonett1998_centermanifold}, including the changes explained in \cite{EscherSimonett1998_VPMCF}, v), one can find a closed subspace $h_s^{2+\alpha}(\mathbb{S}^1)$ of $h^{2+\alpha}(\mathbb{S}^1)$ so that
    $h^{2+\alpha}(\mathbb{S}^1)=\mathcal{L}_1 \oplus h_s^{2+\alpha}(\mathbb{S}^1) $
    is a direct topological sum which reduces the operator $L$.

\begin{prop}[Existence of a center manifold]\label{Prop:existence_center_manifold}
Given $k \in \mathbb{N}_0$, there is an open neighbourhood $\mathcal{O}$ of $\mathfrak{r}$ in $\mathcal{L}_1 = \{1\}$ and a map $\Upsilon \in C^k(\mathcal{O}, h_s^{2+\alpha}(\mathbb{S}^1))$ so that the manifold
$\mathcal{M}^c := \text{graph}(\Upsilon) \subset h^{2+\alpha}(\mathbb{S}^1)$ 
      is locally invariant for  \eqref{eq:22_edo_operador_G}, contains all its small global solutions, and $T_{\mathfrak{r}} \mathcal{M}^c = \mathcal{L}_1$.

\begin{proof}
We aim to show that \eqref{eq:22_edo_operador_G} satisfies the hypotheses (H1), (H2), (H3) of \cite[Theorem 4.1]{simonettCenterManifoldsQRDS}, with the ansatz from \eqref{ansatz}. From the proof of Theorem \ref{teo:short_time_existence_r}, we already know that the conditions of \cite[Theorem 3.1]{simonettCenterManifoldsQRDS} hold for \eqref{eq:22_edo_operador_G}.     By \eqref{eq:h_alpha_result_interpolation}, 
$ h^{1+\alpha} = (h^{\sigma}, h^{2+\sigma})_\delta$,  with
$\delta = (1+\alpha-\sigma)/2$. Then from \cite{Am2} (see the proof of Theorem 2.11.1) we have 
\begin{equation}
    \|r\|_{h^{1+\alpha}} \leq C \|r\|_{h^{\sigma}}^{1-\delta} \|r\|_{h^{2+\sigma}}^{\delta} \quad \text{for all } r \in h^{\sigma}(\mathbb{S}^1),
\end{equation} 
which is the first assumption (H1).

On the other hand, using \eqref{eq:u_varepsilon_entorndelradi}, it is clear that $\mathfrak{r} \in \mathbb{U}_\varepsilon$ for all $\varepsilon > 0$, thus $\mathfrak{r} \in U = \mathbb{U}_\varepsilon \cap h^{1+\alpha}.$ In addition, from \eqref{eq:22_edo_operador_G} and \eqref{eq:*_quasilinear_decomp_G}, we have
$ A(\mathfrak{r}) \mathfrak{r} + B(\mathfrak{r})= \mathcal{G}(\mathfrak{r}) = \frac{v(\mathfrak{r})}{\varphi(\mathfrak{r})} (\mathcal{K} - \overline{\mathcal{K}})(\mathfrak{r}) = 0.$
 Hence hypothesis (H2) is also satisfied for $u_0 = \mathfrak{r}$.

 From Proposition~\ref{Prop:spectrum_-L}, and using the notation in Theorem 4.1 in \cite{simonettCenterManifoldsQRDS}, we have
\begin{equation}
    \sigma_s(-L) \subset \{ x \in \R \mid x < 0 \} \quad \text{and} \quad \sigma_c(-L) = \{0\} ;
\end{equation}
being $0$ an eigenvalue of multiplicity one, which leads to (H3). This means that we have verified all the required hypotheses; hence  \cite[Theorem 4.1]{simonettCenterManifoldsQRDS} ensures that
for a small $\delta > 0$  there exists a $C^k$ map
\begin{equation}
    \Upsilon: \mathcal{O}:= B_{\mathcal{L}_1}(\mathfrak{r}, \delta) \longrightarrow h_s^{2+\alpha}(\mathbb{S}^1)
\end{equation}
satisfying the properties of the statement.
\end{proof}
\end{prop}

Next, we aim to prove that, in a small neighbourhood, the center manifold coincides with the
space $\mathcal{E}$ of parametrizations of equilibria for  \eqref{eq:22_edo_operador_G}, that is, with geodesic circles centered at $\mathfrak{p}$.
With this goal, in the neighbourhood of $\mathfrak{r}$ given by $\mathbb{U}_\varepsilon$, $\mathcal{E}$ can
be parametrized as follows:
\begin{equation}
    \begin{array}{cccl}
     \phi: & \R & \longrightarrow & \mathcal{E}\, \cap \mathbb{U}_\varepsilon  \\
          &  s & \longmapsto & \rho_s
\label{eq:def_phi_param_radios}
\end{array}
\end{equation}
such that $\exp_{\mathfrak{p}}[\rho_s(u) u]$ parametrizes a geodesic circle $S_s$ with center $\mathfrak{p}$ and radius $\mathfrak{r}+s$. In normal coordinates with respect to the pole $\mathfrak{p}$, for each $s\in \mathbb R$, we can write $\rho_s(u)= s+\mathfrak{r}$.

Now we are in position to relate the center manifold $\mathcal{M}^c$ to the set $\mathcal{E} \cap \mathbb{U}_\varepsilon$ of geodesic circles in $\cM$ which are small perturbations of the geodesic circle $\mathcal{S} = \partial B(\mathfrak{p}, \mathfrak{r})$.

\begin{prop}\label{prop:center_manifold_coincides_with_stable_solutions}
     There is a neighbourhood $\mathcal{O}' \subset \mathcal{O}$ of $\mathfrak{r}$ in $\mathcal{L}_1$ for which $\mathcal{E}$ coincides
with an open set of the local center manifold $\mathcal{M}^c$ for \eqref{eq:22_edo_operador_G}, that is, $\mathcal{E}$ is the graph of $\Upsilon$ restricted to $\mathcal{O}'$.
\begin{proof}
    By  Proposition \ref{Prop:existence_center_manifold}, we already know that $\mathcal{M}^c$ contains  all small equilibria of \eqref{eq:22_edo_operador_G}. So it remains to show that $\mathcal{M}^c$ and $\mathcal{E}$ coincide near $\mathfrak{r}$. With this aim, first notice that
 at $s=0$ any function $\rho_s \in \mathbb{U}_\varepsilon  \cap \mathcal{E}$ satisfies $\left. \partial_s \rho_s \right|_{0}= 1$,
    hence, applying Proposition \ref{Prop:existence_center_manifold}, we get that $ \left\{ \left.  \partial_s \rho_s \right|_{0} \right\}$ constitutes a basis of $\{1\}=\mathcal{L}_1= T_{\mathfrak{r}}\mathcal{M}^c$.

    On the other hand, the map $\phi$ defined by \eqref{eq:def_phi_param_radios} is a parametrization of $\mathcal{E}\,\cap \mathbb{U}_\varepsilon$, 
    \begin{equation}
        \phi_{* 0} \frac{d}{ds}= \frac{d\rho_s}{ds}
    \end{equation}
   is also a basis of $T_{\mathfrak{r}} \mathcal E$. Then $T_{\mathfrak{r}}\mathcal{E}=T_{\mathfrak{r}}\mathcal{M}^c$ and, since $\mathcal{M}^c$ contains a neighbourhood of $\mathfrak{r}$ in $\mathcal{E}$ they
must coincide in that neighbourhood. The set $\mathcal{O}'$ is the projection of the latter on $\mathcal{L}_1$.   
\end{proof}
\end{prop}

\subsection{Long time existence and exponential attraction}
The following result, which follows from  \cite[Theorem 6.5 and Proposition 6.6]{EscherSimonett1998_centermanifold}, guarantees the long-time existence of solutions to \eqref{eq:22_edo_operador_G} that start sufficiently close to a constant.

\begin{thm}\label{teo:long_time_existence_r}
    Let $k\in \mathbb N $, a constant $\mathfrak{r} >0$ and $\omega \in (0, -\lambda_1)$ be given, with $\lambda_1$ as in Proposition \ref{Prop:spectrum_-L}. There exists a neighbourhood $W=W(k,\omega)$ of $\mathfrak{r}$ in $h^{1+\beta}(\mathbb{S}^1)$ such that, for every  $r_0\in W$,
    \begin{enumerate}
        \item [(a)] the solution $r(\cdot, r_0)$ of  \eqref{eq:22_edo_operador_G} exist on $[0,\infty)$;
        \item [(b)] there exists a unique $\widetilde{\rho}=\widetilde{\rho}(r_0)=(z_0, \Upsilon(z_0))$, with $z_0\in \mathcal{O}$ such that
        \begin{equation}
            \| r(t,r_0)-\widetilde{\rho}\|_{C^k(\mathbb{S}^1)} \leq C(\omega, k, r_0)  e^{-\omega t}, \qquad \text{for all $t>T =T(\omega, k)$.}
        \end{equation}
    \end{enumerate}
\end{thm}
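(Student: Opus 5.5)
The approach is to verify that our setting fits the abstract framework of \cite[Theorem 6.5 and Proposition 6.6]{EscherSimonett1998_centermanifold}, and then read off (a) and (b). That framework concerns quasilinear parabolic equations $\dot r + \mathcal G(r)=0$ near an equilibrium $\mathfrak r$ which is \emph{normally stable}. Concretely, we need four things.
\begin{enumerate}
\item[(i)] The equation admits a quasilinear decomposition $\mathcal G(r)=A(r)r+B(r)$ with maximal regularity of $A(r)$.
\item[(ii)] Near $\mathfrak r$ the set of equilibria $\mathcal E$ is a finite dimensional $C^1$ manifold.
\item[(iii)] The tangent space $T_{\mathfrak r}\mathcal E$ equals $\ker L$.
\item[(iv)] The remaining spectrum of $-L$ lies in $\{\mathrm{Re}\, z\le \lambda_1<0\}$, and the eigenvalue $0$ is semisimple.
\end{enumerate}
Item (i) is already available from \eqref{eq:*_quasilinear_decomp_G}, Lemma \ref{prop:maximal_regularity_operator_A} and Theorem \ref{teo:short_time_existence_r}, with the choice $(E_0,E_1)=(h^\alpha,h^{2+\alpha})$, $\gamma=1/2$, $\mu=(1+\beta-\alpha)/2$ from \eqref{ansatz}. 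Item (ii) is Proposition \ref{prop:center_manifold_coincides_with_stable_solutions}: the equilibria near $\mathfrak r$ are exactly the constants $\rho_s=\mathfrak r+s$, and they coincide with $\mathcal M^c=\operatorname{graph}\Upsilon$ on $\mathcal O'$. Item (iii) follows because $\ker L=\mathcal L_1=\{1\}=T_{\mathfrak r}\mathcal M^c=T_{\mathfrak r}\mathcal E$. Item (iv) comes from Proposition \ref{Prop:spectrum_-L}. There the eigenvalues $\lambda_i=(-i^2+\psi(\mathfrak r))/\varphi(\mathfrak r)^2$ are all at most $\lambda_1<0$ for $i\ge1$, thanks to $\psi<1$ in \eqref{eq:hipotesis_psi}. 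Moreover, $0$ is simple, since the reduction $h^{2+\alpha}=\mathcal L_1\oplus h_s^{2+\alpha}$ makes its algebraic and geometric multiplicities both equal to one.

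With these in hand, the dynamics proceed in the following order. Fix $\omega\in(0,-\lambda_1)$. Write a solution as $r=x+y$ with $x\in\mathcal L_1$ and $y\in h_s^{2+\alpha}$. On the center manifold, every point is an equilibrium, so the reduced flow is trivial. The deviation $y-\Upsilon(x)$ then satisfies a linear equation driven by $L$ restricted to $h_s^{2+\alpha}$, plus superlinear remainders. The semigroup $e^{-tL_s}$ decays like $e^{-\omega' t}$ for any $\omega'<-\lambda_1$; we choose $\omega<\omega'$ to absorb the nonlinear terms. A continuation (bootstrap) argument in $h^{1+\beta}$ now does two things. First, it shows that for $r_0$ in a small neighbourhood $W$ the solution stays in $\mathbb U_\varepsilon$, so it never leaves the domain where Theorem \ref{teo:short_time_existence_r} applies; this gives global existence, item (a). Second, it shows that $x(t)$ converges to some $z_0\in\mathcal O$, with the remaining component decaying exponentially in $h^{1+\beta}$.

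To upgrade the $h^{1+\beta}$ convergence to the $C^k$ estimate in (b), use parabolic smoothing. Theorem \ref{teo:short_time_existence_r} together with \cite[Theorem 1]{EscherSimonett1997} shows that the solution is smooth for positive times. One then interpolates between the exponential decay in low norms and uniform bounds in high norms on $[T,\infty)$. This loses an arbitrarily small amount of the rate, which is why $T$ and $W$ must depend on $(\omega,k)$. Uniqueness of $\widetilde\rho$ is immediate, since the limit of $r(t)$ is unique.

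The main obstacle is ensuring the abstract theorem's hypotheses hold in exactly the function spaces chosen. In particular, the nonlocal average term $\overline{\mathcal K}$ must not spoil the smoothness $\mathcal G\in C^\infty(U,h^\alpha)$. Also, the spectral gap must be uniform for $L$ at nearby equilibria $\mathfrak r+s$. I would handle the latter using continuity of $\psi$ and $\varphi$ and shrinking $\mathcal O'$.
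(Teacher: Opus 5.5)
Your proposal is correct and follows the paper's route. The paper proves this statement simply by invoking \cite[Theorem 6.5 and Proposition 6.6]{EscherSimonett1998_centermanifold}. It has already verified the required hypotheses: quasilinear structure with maximal regularity (Lemma~\ref{prop:maximal_regularity_operator_A}, Theorem~\ref{teo:short_time_existence_r}), the spectral gap with a simple zero eigenvalue (Proposition~\ref{Prop:spectrum_-L}), and the identification of the center manifold with the equilibria (Proposition~\ref{prop:center_manifold_coincides_with_stable_solutions}), exactly as in your items (i)--(iv). Your extra sketch of what happens inside the abstract theorem (trivial reduced flow on $\mathcal M^c$, exponential decay of the stable component, parabolic smoothing for the $C^k$ upgrade) is consistent with that result, but nothing beyond the citation is needed.
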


    From Proposition \ref{prop:center_manifold_coincides_with_stable_solutions}, as $\widetilde{\rho}=(z_0, \Upsilon(z_0))\in \mathcal{M}^c$ with $z_0\in \mathcal O$, $\widetilde{\rho}$ parametrizes a geodesic sphere of the Hadamard surface $\mathcal M$. Accordingly, the solution $r(u, t)$ converges  to a geodesic sphere of $\mathcal M$ exponentially fast in the $C^k$-topology, for any $k\in\mathbb N$, as $t$ tends to infinity. Now the proof of Theorem \ref{corm} follows as in \cite{CabezasRivasMiquel2007}.
 
\subsection{Exponential convergence for the AP-CSF of convex curves}

We are now in position to deduce that the solution converges exponentially to a geodesic circle of $\cM$. Indeed,

\begin{itemize}
    \item[\tiny{$\bullet$}]  Proposition \ref{prop:curvature_bounded} yields curvature bounds that imply the long-time existence ($\mathcal T = \infty$) of the flow for any initial convex curve.
    
    \item[\tiny{$\bullet$}] A uniform bound on the radial distance would give a convergent subsequence of curves.
    
    \item[\tiny{$\bullet$}] The fact that the geodesic circle of radius  $\mathfrak{r}$ is an attractor implies that any subsequence converging near $\mathfrak{r}$ must converge to it.
\end{itemize}

We now proceed to establish the second item: the existence of a convergent subsequence under a radial distance bound. 
We first show that if there is no such a uniform bound, then the entire curve must escape to infinity.

\begin{lemma}\label{lema:alternativa_bdd_or_escape}
	Under the hypotheses of Theorem \ref{main} {\rm (4)},  exactly one of the following two alternatives holds:
	\begin{enumerate}
		\item[{\rm (a)}] The family $\{r_t := r_{\frak p} \circ \gamma_t\}_{t \in [0,\mathcal{T})}$ is uniformly bounded, i.e. $\sup_{t \in [0,T)} \max_{p \in \mathbb{S}^1} r_t(p) \leq K$.
		\item[{\rm (b)}] For any $m \in \mathbb{R}^+$, there exists $t \in [0,\mathcal{T})$ such that $\min_{p \in \mathbb{S}^1} r_t(p) > m$.
	\end{enumerate}
\end{lemma}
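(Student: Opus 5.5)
The plan rests on one key fact: the diameter of $\Gamma_t$ is uniformly bounded in time. With that, ``not bounded'' upgrades immediately to ``the whole curve escapes''. By Theorem \ref{main} (1)–(2) the solution exists for all time ($\mathcal T=\infty$) and stays strictly convex. By \eqref{diamL} together with Proposition \ref{prop:APCSF_L_decreases_LPCF_A_increases} (length is non-increasing for the {\sc ap-csf}), we have
\[
\operatorname{diam}(\Gamma_t)\le L(t)\le L_0 \qquad\text{for all } t.
\]
The triangle inequality for the distance to the pole $\mathfrak p$ then gives, for every $t$,
\[
\max_{\mathbb S^1} r_t - \min_{\mathbb S^1} r_t \le \operatorname{diam}(\Gamma_t)\le L_0 .
\]
In other words, the oscillation of $r_t$ along each curve is controlled uniformly in time.

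Next I check that the two alternatives are mutually exclusive. If (a) holds with bound $K$, then $\min r_t\le \max r_t\le K$ for all $t$. So (b) fails for $m=K$, since $\min r_t>K$ is never attained. This part is trivial.

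Then I show that at least one alternative holds. Suppose (a) fails, i.e.\ $\sup_t\max_{\mathbb S^1} r_t=\infty$. Given any $m>0$, choose $t$ with $\max r_t> m+L_0$. The oscillation bound then yields $\min r_t\ge \max r_t - L_0> m$, which is exactly (b). So the whole argument reduces to the diameter bound. One subtlety is whether the curve may pass through the pole, where $r$ fails to be smooth; but only the Lipschitz property of $r$ is used, so this causes no trouble.

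The only point requiring care is the justification that $\operatorname{diam}(\Gamma_t)\le L(t)$ for a closed curve. I would argue directly: any two points of $\Gamma_t$ split it into two arcs, one of which has length at most $L/2$. Hence their distance is at most $L/2\le L_0$, which is even better than the bound needed, and convexity is not required for this step. I expect no genuine obstacle. The main thing is to note that I should not rely on Lemma \ref{lema:bounds_rp0_and_upper_outer_radius}, since its center $p_0$ moves with time; the argument must use the fixed pole $\mathfrak p$ together with the length monotonicity of the area-preserving flow.
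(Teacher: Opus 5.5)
Your proof is correct and is essentially the paper's argument: a time-uniform bound on $\operatorname{diam}(\Gamma_t)$ plus the triangle inequality for $r_{\mathfrak p}$. The paper runs the contrapositive direction, assuming (b) fails and deducing (a), and takes the diameter bound as $2\rho_+(t)\le 2L_0$ from Lemma \ref{lema:bounds_rp0_and_upper_outer_radius}. That use is legitimate, since $\rho_+$ does not depend on the moving centre $p_0$, so your worry about that lemma is unfounded; still, your arc-splitting bound $\operatorname{diam}(\Gamma_t)\le L(t)/2\le L_0/2$ is more direct, and you also verify the mutual exclusivity that the paper leaves implicit.
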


\begin{proof}
Suppose that (b) does not hold. Then, there is a constant $r_c > 0$ such that
\begin{equation}
\min_{p \in \mathbb{S}^1} r_t(p) < r_c \qquad \text{for all } \quad t \in [0,\mathcal T).
\end{equation}
Then, for each $t$, pick $p_t \in \mathbb{S}^1$ such that $r_t(p_t) < r_c$. By Lemma \ref{lema:bounds_rp0_and_upper_outer_radius}, for any $q \in \mathbb{S}^1$, we have
\begin{equation}
d\big(\gamma_t(q), \gamma_t(p_t)\big) \leq 2\rho_+(t) \leq 2L_0.
\end{equation}
Hence, the radial distance to any point on the curve $\Gamma_t$ satisfies
\begin{equation}
r_t(q) = d(\gamma_t(q), \mathfrak{p}) \leq d(\gamma_t(q), \gamma_t(p_t)) + d(\gamma_t(p_t), \mathfrak{p}) < 2L_0 + r_c.
\end{equation}
That is, $ r_t $ is uniformly bounded by $ 2L_0 + r_c $.
\end{proof}

Accordingly, we can finish the proof of Theorem \ref{main} (4) by means of the following result.
\begin{cor}
	With the same hypotheses as in Lemma \ref{lema:alternativa_bdd_or_escape}, exactly one of the following two alternatives holds:
	\begin{enumerate}
		\item[{\rm (a)}] The family $\{\Gamma_t\}_{t \in [0,\infty)}$ lies on a compact region (containing the pole $\frak p$). 
		\item[{\rm (b)}] There is an increasing subsequence of times $\{t_n\}_{n\geq 0}$ such that 
	$\min_{\mathbb{S}^1} r_{t_n} > n.$
	\end{enumerate}
If {\rm (a)} happens, then the solution  converges exponentially fast in the $C^\infty$-topology to a geodesic circle centered at $\frak p$ enclosing the same area as the initial curve.
\end{cor}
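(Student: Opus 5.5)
The dichotomy itself comes straight from Lemma \ref{lema:alternativa_bdd_or_escape}. If alternative (a) there holds, the curves lie in the closed geodesic ball $\overline{B}(\frak p, K)$, which is compact because $\cM$ is complete. If alternative (b) holds, then choosing $m=n$ successively gives times $t_n$ with $\min_{\mathbb S^1} r_{t_n}>n$. We can make the $t_n$ increasing: by Theorem \ref{main} (2) the solution is smooth on every compact time interval, so $\min r_t$ is bounded on $[0,t_{n-1}]$, and the time for the next level must lie beyond $t_{n-1}$. The two alternatives exclude each other, since (b) forces $\sup_t\max r_t=\infty$. So the real work is to prove convergence in case (a).

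In case (a) I would argue as follows.

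\textbf{Step 1: uniform derivative bounds.} Proposition \ref{prop:curvature_bounded} gives a uniform bound on $\kappa$. Compactness of the region gives bounds on all $|\nabla^\ell\KGauss|$. Proposition \ref{Prop:bounds_higher_deriv_kappa_APCSF_LPCF} then bounds every $\partial_s^\ell\kappa$ uniformly in time.

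\textbf{Step 2: the limit curve is a geodesic circle.} By Theorem \ref{teo:limit_kappa_es_h}, $\sup|\kappa-h|\to0$. Moreover $\int_0^\infty E\,dt\le L_0$. By Arzelà–Ascoli (after arclength reparametrization, using $L\ge aA_0>0$), any sequence $t_j\to\infty$ has a subsequence along which $\Gamma_{t_j}$ converges smoothly to a closed embedded convex curve $\Gamma_\infty$. This curve has constant curvature, encloses area $A_0$, and lies at bounded distance from $\frak p$. Embeddedness survives the limit because the inner radius is bounded below (Lemma \ref{lema:bounds_inner_radius}) and the curves are convex. Since $\KGauss$ is decreasing in $r$, the classification of constant curvature curves in \cite[Lemma 2.18]{ritoreisoperimetric} shows that $\Gamma_\infty$ is a geodesic circle centered at $\frak p$. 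Its radius $\mathfrak r$ is then uniquely fixed by $A_0$, because the area of $B(\frak p,r)$ is strictly increasing in $r$. So every subsequential limit is the same circle $\mathcal S=\partial B(\frak p,\mathfrak r)$.

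\textbf{Step 3: exponential convergence.} For large $j$ the curve $\Gamma_{t_j}$ is smoothly close to $\mathcal S$. Being convex and near $\mathcal S$, it is a radial graph $r(\cdot,t_j)$ lying in the neighbourhood $W$ of Theorem \ref{teo:long_time_existence_r}, measured in $h^{1+\beta}$. Restarting the flow from $\Gamma_{t_j}$, that theorem gives exponential convergence in every $C^k$ to some $\widetilde\rho$ on the center manifold. By Proposition \ref{prop:center_manifold_coincides_with_stable_solutions}, $\widetilde\rho$ is a geodesic circle centered at $\frak p$. Area preservation forces it to be $\mathcal S$ itself. Uniqueness of solutions (Theorem \ref{teo:short_time_existence_r}, up to tangential reparametrization) identifies this restarted flow with the original one. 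This yields $C^\infty$ exponential convergence of the whole flow.

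The main obstacle is Step 2. One has to pass from \emph{$\kappa-h\to0$} to \emph{the limit is a geodesic circle centered at $\frak p$}. This requires knowing that $h(t_j)$ converges along the subsequence, which holds since $h$ is bounded. It also requires applying the classification of constant curvature curves only to closed curves at bounded radial distance, which is exactly where compactness from (a) enters. A secondary technical point is in Step 3: checking that smooth convergence of convex curves gives closeness in the $\mathbb U_\varepsilon\cap h^{1+\beta}$ topology of radial graphs. This should follow from $\langle\pr,N\rangle$ being bounded away from zero near $\mathcal S$.
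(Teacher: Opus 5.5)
Your proposal is correct and follows essentially the same route as the paper: the dichotomy via Lemma~\ref{lema:alternativa_bdd_or_escape} together with boundedness of the curves on compact time intervals (you use continuity of the solution, the paper uses the annulus bounds of Lemma~\ref{lema:bounds_rp0_and_upper_outer_radius}), then uniform curvature and derivative bounds, Theorem~\ref{teo:limit_kappa_es_h} and the classification of constant-curvature curves to identify the subsequential limit circle, and finally a restart in the neighbourhood of Theorem~\ref{teo:long_time_existence_r} plus uniqueness. The only step to tighten is the choice of increasing times: take the level $m_n > \max\{n, \max_{t\in[0,t_{n-1}]}\max_{\mathbb S^1} r_t\}$ rather than $m=n$, so that the new time is forced to lie beyond $t_{n-1}$.
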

\begin{proof}
	We argue by contradiction. Suppose that the family $\{\Gamma_t\}_{t\in[0,\infty)}$ does not lie on any compact set containing the pole.  
	Then, for every $m\in\mathbb{N}$, there exists a time $t_m\ge 0$ such that \[\min_{\mathbb{S}^1} r_{t_m} > m.\]
	Indeed, this is a consequence of the fact that the curves lie in a uniformly bounded annulus, as we now show.
	From Lemma \ref{lema:bounds_rp0_and_upper_outer_radius}, for each $t_0\in[0,\infty]$ there exists a point $p_0\in \mathcal{M}$ such that	
	\[
	\Gamma_t \subset B(p_0, r_2) \qquad  \text{for all } t \in [t_0,t_0+\tau),
	\]
	where $\tau$ is the constant from Lemma \ref{lema:innerbal}. 
	For any $p,q\in \mathbb{S}^1$, the triangle inequality gives
	\[
	r_t(p)   \leq r_t(q) + d(\gamma_t(p),\gamma_t(q)).
	\]
	Hence,
	\[
	|r_t(p)-r_t(q)| \leq d(\gamma_t(p),\gamma_t(q))
	\quad \text{for all } p,q\in \mathbb{S}^1.
	\]
	Taking the supremum over all $p,q \in \mathbb{S}^1$, we conclude that
	\[
	\max_{p\in \mathbb{S}^1} r_t(p) - \min_{p\in \mathbb{S}^1} r_t(p) \leq \operatorname{diam}(\Gamma_t) \leq 2 r_2.
	\]
	From the assumption, for each $m \in \mathbb{N}$, there exists a time  $t_m\ge 0$ such that
	\[ \Gamma_{t_m} \nsubseteq B(\mathfrak{p}, m+2r_2)\]
	consequently
	\[
	\max_{p\in \mathbb{S}^1} r_t(p) > m+2r_2
	\]
	from where the statement follows.

It remains to show that the times $t_m$ can be chosen to form an increasing sequence. With this aim, 
	assume by contradiction that this is not the case. Then there exists $m_0 \in \mathbb{N}$
	such that
	\[
	t_m \leq t_{m_0} \qquad \text{for all } m \ge m_0.
	\]
	
	In particular, for every $t > t_{m_0}$ we must have
	\[
	\min_{p \in \mathbb{S}^1} r_t(p) \le m_0.
	\]
	Using the uniform bound on the diameter of $\gamma_t$ from Lemma \ref{lema:bounds_rp0_and_upper_outer_radius}, we obtain
	\[
	\max_{p \in \mathbb{S}^1} r_t(p) \le m_0 + 2r_2,
	\]
	and hence
	\[
	\Gamma_t \subset \overline{B(\mathfrak{p},\, m_0 + 2r_2)} \qquad
	\text{for all } t > t_{m_0}.
	\]
	Therefore, the family $\{\Gamma_t\}_{t > t_{m_0}}$ is contained in a compact set.
	
	Since $t_{m_0} < \infty$, we can set $N:=\left\lceil \frac{t_{m_0}}{\tau} \right\rceil<\infty$, where $\tau$ is again the constant from Lemma \ref{lema:innerbal}, and we may cover the interval $[0,t_{m_0}]$ by finitely many
	subintervals of the form
	\[
	I_k := [k\tau,(k+1)\tau), \qquad
	k = 0,\dots,N.
	\]
	Again from Lemma \ref{lema:bounds_rp0_and_upper_outer_radius}, for each such interval $I_k$, there exists a point $p_k$ (chosen at time $k\tau$) 
	such that
	\[
	\Gamma_t \subset \overline{\mathcal A_{p_k}(r_1/2,r_2)} \qquad \text{for all } t \in I_k,
	\]
	with the notation coming from \eqref{eq:def_donut},
	Consequently, we reach
	\[
	\bigcup_{t \in [0,t_{m_0}]} \Gamma_t
	\subset \bigcup_{k=1}^N \overline{\mathcal A_{p_k}(r_1,r_2)}.
	\]
	Altogether, the family $\{\Gamma_t\}_{t\in[0,\infty)}$ is contained in a compact
	set containing the pole, yielding a contradiction. 
		
	To prove the last claim of the statement, assume  that (a) happens, i.e., the radial function $r_t$ is uniformly bounded. From Proposition~\ref{prop:curvature_bounded}, we know that under the flow the curvature $\kappa$ remains uniformly bounded. Accordingly, Theorem~\ref{teo:k_unbdd_or_T_infty} ensures that the solution exists for all times $t \in [0, \infty)$.

	As  the image of the evolving curve $\Gamma_t$ remains in a fixed compact region of $\cM$, since the Gauss curvature $\KGauss$ and all its covariant derivatives are smooth functions on $\cM$, there exists a constant ${C} > 0$ such that
	\[
	\max_{0\leq \ell \leq 2} \, \sup_{\Gamma_t} |\nabla^\ell \KGauss| \leq C
	\]
	for every $t \geq 0$. Therefore, the hypotheses of Theorem \ref{teo:limit_kappa_es_h} are satisfied, which yields
	\[
	\lim_{t \to \infty} \sup_{\mathbb{S}^1} |\kappa(\cdot,t) - h_\infty| = 0.
	\]
	Indeed, notice that the family $\{h(t)\}_{t<\infty}$ is bounded in $\mathbb{R}$, and thus, by the Bolzano--Weierstrass theorem, there exists a sequence $t_n \to \infty$ such that $h(t_n) \to h_\infty$ for some $h_\infty \in \mathbb{R}$. Thus  $\kappa(\cdot,t_n)$ converges uniformly to the constant  $h_\infty$ on $\mathbb{S}^1$.
	
	We know that under our hypotheses the only closed curves with constant curvature are geodesic circles centered at the pole. From the previous discussion, we have that $\gamma(\cdot,t_n)$ converges uniformly to such a geodesic circle. By exploiting higher derivative estimates as those in Proposition \ref{Prop:bounds_higher_deriv_kappa_APCSF_LPCF}, it follows by standard methods that $\gamma(\cdot,t_n)$ is arbitrarily close in $C^\infty$ to a geodesic circle for large $n$.
	
	Therefore, if we took $\gamma(\cdot,t_n)$ as initial data, we would be in the setting of Theorem~\ref{teo:long_time_existence_r}, and the flow would converge exponentially fast to the corresponding geodesic circle in the $C^k$-topology for every $k \in \mathbb{N}$. Since the solution is unique up to reparametrization, the original flow must also converge exponentially (up to reparametrization) to the same geodesic circle.
\end{proof}

Finally, we manage to find a geometric condition that guarantees that  the evolving curve does not escape to infinity. Indeed, condition \eqref{eq:condicio_area} provides a geometric criterion ensuring the boundedness of the radial distance to the pole. 
	It requires that the initial enclosed area $A_0$ is smaller than the area of a geodesic disk of radius $r_1$ in the model space of constant curvature $\KGauss = -b^2$. 
	Here, $r_1$ is a lower bound for the inner radius of the domain enclosed by the curve, and it depends explicitly on the initial geometry through $A_0$ and $L_0$. 
	Although this makes \eqref{eq:condicio_area} an implicit condition, it assures that the flow cannot escape to infinity while preserving area. By Bishop-Gunther comparison theorem, the condition of the area is satisfied for any geodesic circle of radius $r_1$, and hence for any curve in the region enclosed by it.

\begin{prop}\label{lemma:condicio_area_distancia_radial_fitada}
	Under the assumptions of Theorem \ref{main} (4), if the initial  curve $\gamma_0$ satisfies
	\begin{equation}\label{eq:condicio_area}
	A_0 \leq  A_{r_1}^{-b^2}=\frac{2\pi}{b^2} \left(\cosh\left( \frac{2b}{a} \ \coth^{-1} \left( \frac{\sqrt{\Delta(0)} + L_0}{A_0 a} \right)\right)-1 \right) ,
	\end{equation}
	where $r_1$ is the lower bound for the inner radius from Lemma \ref{lema:bounds_inner_radius}.  Then the solution  converges in $C^\infty$ exponentially fast to a geodesic circle centered at $\frak p$ enclosing area $A_0$.

\end{prop}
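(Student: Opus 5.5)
By the dichotomy in the preceding Corollary, it suffices to rule out alternative (b). Once (b) is excluded, alternative (a) holds and the same Corollary yields exponential $C^\infty$ convergence to a geodesic circle centered at $\frak p$ enclosing area $A_0$. So the whole task is to show that, under \eqref{eq:condicio_area}, the evolving curves cannot drift off to infinity. I will argue by contradiction, combining the uniform inner-radius bound with area preservation and volume comparison.

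\textbf{Step 1: comparison of areas.} By Lemma \ref{lema:bounds_inner_radius}, for every $t$ the domain $\Omega_t$ contains a geodesic ball $B(p_t,\rho_-(t))$ with $\rho_-(t)\geq r_1$. Since $\KGauss\geq -b^2$, the Bishop--G\"unther comparison gives that any geodesic disk of radius $\rho$ in $\cM$ has area at most $A^{-b^2}_\rho$, the area of the disk of radius $\rho$ in the model space of curvature $-b^2$. This is only an upper bound on the area of the inball, so by itself it does not contradict $A(t)=A_0$. The useful consequence must instead come from the rotational structure: an inball of radius $\geq r_1$ centered far from $\frak p$ must be compared with what the area constraint permits.

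\textbf{Step 2: behaviour far from the pole.} The plan is as follows. Since $\KGauss$ is radially decreasing and pinched, it tends to a limit $\KGauss_\infty\geq -b^2$ as $r\to\infty$. Suppose alternative (b) holds, so $\min r_{t_n}>n$. Then $\Omega_{t_n}$ lies in the region where $\KGauss$ is close to its infimum. On the other hand, the inball $B(p_{t_n},r_1)\subset\Omega_{t_n}$ has area at least $A^{-\inf\KGauss}_{r_1}$ by the reverse (G\"unther-type) comparison, because the ambient curvature there is $\leq \sup_{r>n}\KGauss$. Letting $n\to\infty$, we get
\[
A_0=A(t_n)\geq |B(p_{t_n},r_1)|\gtrsim A^{\KGauss_\infty}_{r_1},
\]
and in the extremal case $\KGauss_\infty=-b^2$ this reads $A_0\geq A^{-b^2}_{r_1}$. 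Strictness should come from $\Omega_{t_n}\setminus B$ having positive area, since the curve is not a circle (it has not converged). This contradicts \eqref{eq:condicio_area}, which should be read with the equality case excluded, or handled by that strictness argument. The explicit formula in \eqref{eq:condicio_area} is simply $A^{-b^2}_\rho=\frac{2\pi}{b^2}(\cosh(b\rho)-1)$ evaluated at $\rho=r_1$ from \eqref{eq:fita_inferior_inner_radius}.

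\textbf{Main obstacle.} The hard part is matching the direction of the comparison inequalities. Bishop--G\"unther with $\KGauss\geq -b^2$ bounds disk areas from above, while with $\KGauss\leq -a^2$ it bounds them from below by $A^{-a^2}$. As stated, the condition uses $-b^2$, which only yields a contradiction if the curvature at infinity approaches $-b^2$, or if the pinching constant $b$ is effectively attained at infinity thanks to the monotonicity of $\KGauss$. I would first try to justify that, for a radially decreasing $\KGauss$, one may take $-b^2=\inf\KGauss=\lim_{r\to\infty}\KGauss$. I would then handle the limit $n\to\infty$ by taking a subsequential limit of the translated domains and using the strict inclusion of the inball to obtain strict inequality. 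If that fails, the fallback is to reinterpret the condition via \eqref{eq:condicio_area} as preventing $\rho_-(t)$ from reaching $r_1$ in the far region.
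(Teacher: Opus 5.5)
Your route is the paper's: assume escape (alternative (b) of Lemma~\ref{lema:alternativa_bdd_or_escape}), use the disk of radius $\rho_-(t)\ge r_1$ inside $\Omega_t$ from Lemma~\ref{lema:bounds_inner_radius}, bound its area from below by the G\"unther-type comparison (Osserman) in the far region, invoke $A(t)=A_0$, and let the model curvature tend to $-b^2$. Working directly with $\KGauss(n)\downarrow\KGauss_\infty$ is slightly cleaner than the paper's inverse function $\Psi$, and it needs only non-strict monotonicity.

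Two local corrections are needed. First, at finite $n$, a disk contained in $\{r>n\}$ satisfies $\KGauss\le\KGauss(n)$, so G\"unther gives area $\ge A^{\KGauss(n)}_{r_1}$. The model disk of curvature $\inf\KGauss$ gives an \emph{upper} bound there (Bishop), and becomes a lower bound only after $n\to\infty$. Second, $\min_{\Gamma_{t_n}}r>n$ places the inball in $\{r>n\}$ only if $\frak p\notin\Omega_{t_n}$. This holds for large $n$, since otherwise $B(\frak p,n)\subset\Omega_{t_n}$ and $A_0\ge A^{-a^2}_n\to\infty$; the paper skips this point too.

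\textbf{Sharpness of $b$.} This obstacle is genuine, and the paper does not overcome it. It cannot be derived from \eqref{pinched}: if $\KGauss_\infty>-b^2$, the hypothesis $A_0\le A^{-b^2}_{r_1}$ is weaker than $A_0\le A^{\KGauss_\infty}_{r_1}$, and the argument yields no contradiction. The paper builds this in tacitly by declaring $\Psi$ to be defined on $[a^2,b^2)$, i.e.\ $b^2=-\lim_{r\to\infty}\KGauss$. You should state it as a hypothesis rather than try to prove it.

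\textbf{Strictness.} This obstacle is also genuine and unresolved in the paper. For each $c<b$ the argument gives $A^{-c^2}_{r_1}<A_0$, and the limit yields only $A^{-b^2}_{r_1}\le A_0$. The paper's strict inequality in \eqref{eq:ineq_r_area_b} is unjustified. Your remedies do not produce a uniform gap. The positive area of $\Omega_{t_n}\setminus B$ degenerates in the limit. There are no translations in $\cM$, and even a pointed limit domain could, as far as your information goes, be a disk of radius exactly $r_1$.

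What does work is improving $r_1$ itself. If $\gamma_0$ has constant curvature, the solution is stationary, and within the paper's framework it is a circle about $\frak p$. Otherwise $E(0)>0$, so for any $t_*>0$ and all $t\ge t_*$ you have $L(t)\le L(t_*)<L_0$ and $\Delta(t)\le\Delta(t_*)<\Delta(0)$. The proof of Lemma~\ref{lema:bounds_inner_radius} then gives
$\rho_-(t)\ge r_1^*:=\frac{2}{a}\coth^{-1}\bigl(\frac{L(t_*)+\sqrt{\Delta(t_*)}}{A_0a}\bigr)>r_1$.
Escaping times eventually exceed $t_*$, because $\bigcup_{t\le t_*}\Gamma_t$ is compact. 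Rerunning the argument with $r_1^*$ gives $A_0\ge A^{-b^2}_{r_1^*}>A^{-b^2}_{r_1}$, which contradicts \eqref{eq:condicio_area} even in the equality case.
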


\begin{proof}
Suppose, by contradiction, that the family $\{r_t\}$ is not uniformly bounded. Then, by Lemma~\ref{lema:alternativa_bdd_or_escape}, for any $m \in \mathbb{R}^+$, there exists $t \in [0,\mathcal T)$ such that $\min_{\mathbb{S}^1} r_t > m$.

Since $\cM$ is a rotationally symmetric surface and $\KGauss$ is \emph{strictly} decreasing, equation \eqref{eq:Kgaussym} implies that the map $r  \mapsto  \frac{\varphi''(r)}{\varphi(r)}$
is a diffeomorphism onto its image. In particular, it admits a smooth inverse function $\Psi : [a^2, b^2) \to \mathbb{R}^+ \cup \{0\},$ which allows us to express the radial coordinate $r$ in terms of the Gaussian curvature. Let us denote $\Psi(c^2) = r_c$.

Then, under the assumption that $r_t$ is not uniformly bounded, for any $r_c > 0$ there exists $t \in [0,\mathcal T)$ such that
\begin{equation}
\min_{p \in \mathbb{S}^1} r_t(p) > r_c, \quad \text{and thus } \quad \max_{p \in \mathbb{S}^1} \KGauss\big(\gamma_t(p)\big) < -c^2.
\end{equation}

    By Lemma \ref{lema:bounds_inner_radius}, for all time $t \in [0, \mathcal T)$ there exists some point $p_t \in M$ such that
    \begin{equation}
        B(p_t, \rho_{-}) \subset \Omega_t,
    \end{equation}
    where $\Omega_t$ denotes the region enclosed by $\Gamma_t$. Then, as the flow preserves area, we have
    \begin{equation}
        A^{-c^2}_{\rho_{-}} \leq A\big(B(p_t, \rho_{-})\big) < A(\Omega_t) = A_0,
        \label{eq:ineq_r_area_c}
    \end{equation}
    where the first inequality follows from \cite[corollary of Lemma 7]{ossermanIospIneq}, and $A^{-c^2}_{\rho_{-}}$ denotes the area of a geodesic disk of radius $\rho_{-}$ in the complete simply-connected surface of constant curvature $-c^2$, which satisfies
    $$A_{\rho_-}^{-c^2}= \frac{4\pi}{c}  \sinh^2\left(\frac{c \rho_{-}}{2}\right) = 2\pi \frac{\cosh(c\rho_{-})-1}{c^2}.$$
    On the other hand, using that the inequality \eqref{eq:ineq_r_area_c} is satisfied for any $c\in [a^2,b^2)$, that the function $\sinh^2\left(\frac{b \, \cdot}{2}\right)$ is non-decreasing, and the lower bound for $\rho_{-}$ from Lemma \ref{lema:bounds_inner_radius}, we attain
    \begin{equation}
           A_{r_1}^{-b^2}=\frac{2\pi}{b^2} \left(\cosh\left( \frac{2b}{a} \ \coth^{-1} \left( \frac{\sqrt{\Delta(0)} + L_0}{A_0 a} \right)\right)-1 \right) < A_0,
          \label{eq:ineq_r_area_b}
    \end{equation}
which contradicts \eqref{eq:condicio_area}. Thus the family $\{r_t\}_{t \in [0,\mathcal T)}$ must be uniformly bounded.
 
\end{proof}

\section*{Acknowledgements} Both authors would like to thank Vicente Miquel for enlightening discussions and continuous support.

\section*{Data availability} No data was used for the research described in the article.


\end{document}